\begin{document}

\newcommand{\DD}{\mathbb{D}}
\newcommand{\EE}{\mathbb{E}}
\newcommand{\HH}{\mathbb{H}}
\newcommand{\LL}{\mathbb{L}}
\newcommand{\NN}{\mathbb{N}}
\newcommand{\PP}{\mathbb{P}}
\newcommand{\RR}{\mathbb{R}}
\newcommand{\SM}{\mathbb{S}}

\newcommand{\BB}{\mathcal{B}}
\newcommand{\CC}{\mathcal{C}}
\newcommand{\CF}{\mathcal{F}}
\newcommand{\CK}{\mathcal{K}}
\newcommand{\CJ}{\mathcal{J}}
\newcommand{\CL}{\mathcal{L}}
\newcommand{\CM}{\mathcal{M}}
\newcommand{\CP}{\mathcal{P}}
\newcommand{\CR}{\mathcal{R}}
\newcommand{\CU}{\mathcal{U}}
\newcommand{\CW}{\mathcal{W}}
\newcommand{\CX}{\mathcal{X}}
\newcommand{\CY}{\mathcal{Y}}
\newcommand{\II}{\mathbf{1}}
\newcommand{\hX}{\hat X}
\newcommand{\bq}{\bar q}
\newcommand{\bV}{\bar V}

\newtheorem{theorem}{Theorem}[section]
\newtheorem{lemma}[theorem]{Lemma}
\newtheorem{coro}[theorem]{Corollary}
\newtheorem{defn}[theorem]{Definition}
\newtheorem{assp}[theorem]{Assumption}
\newtheorem{expl}[theorem]{Example}
\newtheorem{prop}[theorem]{Proposition}
\newtheorem{rmk}[theorem]{Remark}

\newcommand\tq{{\scriptstyle{3\over 4 }\scriptstyle}}
\newcommand\qua{{\scriptstyle{1\over 4 }\scriptstyle}}
\newcommand\hf{{\textstyle{1\over 2 }\displaystyle}}
\newcommand\hhf{{\scriptstyle{1\over 2 }\scriptstyle}}

\newcommand{\proof}{\noindent {\it Proof}. }
\newcommand{\eproof}{\hfill $\Box$} 

\def\a{\alpha}      \def\al{\aleph}       \def\be{\beta}    \def\c{\check}       \def\d{\mathrm{d}}   \def\de{\delta}   \def\e{\varepsilon}  \def\ep{\epsilon}
\def\eq{\equiv}     \def\f{\varphi}   \def\g{\gamma}       \def\h{\forall}   \def\i{\bot}
\def\j{\emptyset}   \def\k{\kappa}    \def\lan{\langle}    \def\ran{\rangle} \def\lbd{\lambda}
\def\m{\mu}         \def\n{\nu}
\def\nn{\nonumber}  \def\o{\theta}    \def\p{\phi}         \def\q{\surd}     \def\r{\rho}
\def\ra{\rightarrow}
\def\s{\sigma}      \def\td{\tilde}   \def\up{\upsilon}
\def\vk{\varkappa}   \def\vr{\varrho}
\def\ve{\vee}     \def\vo{\vartheta}
\def\w{\omega}      \def\we{\wedge}     \def\x{\xi}          \def\y{\eta}      \def\z{\zeta}

\def\D{\Delta}     \def\F{\Phi}         \def\G{\Gamma}    \def\K{\times}
\def\L{\Lambda}    \def\M{\partial}     \def\N{\nabla}       \def\O{\Theta}    \def\S{\Sigma}
\def\T{\tau}       \def\U{\bigwedge}    \def\V{\bigvee}      \def\W{\Omega}    \def\X{\Xi}       \def\Ex{\exists}

\def\1{\oslash}   \def\2{\oplus}    \def\3{\otimes}      \def\4{\ominus}
\def\5{\circ}     \def\6{\odot}     \def\7{\backslash}   \def\8{\infty}
\def\9{\bigcap}   \def\0{\bigcup}   \def\+{\pm}          \def\-{\mp}
\def\la{\langle}  \def\ra{\rangle}  \def\ra{\rightarrow}

\def\tl{\tilde}
\def\trace{\hbox{\rm trace}}
\def\diag{\hbox{\rm diag}}
\def\for{\quad\hbox{for }}
\def\refer{\hangindent=0.3in\hangafter=1}

\newcommand\wD{\widehat{\D}}

\title{
\bf
Development of numerical methods for nonlinear hybrid stochastic functional differential equations with infinite delay
 }

\author{
{\bf
Guozhen Li ${}^{1}$,
Xiaoyue Li ${}^{2}$\thanks{Research
of this author  was supported by the National Natural Science Foundation of China (No. 12371402) and the Tianjin Natural Science Foundation (24JCZDJC00830).},
Xuerong Mao${}^{3}$\thanks{ Research of this author was supported by
the Royal Society (No. WM160014, Royal Society Wolfson Research Merit Award),
the Royal Society of Edinburgh (No. RSE1832).  }}
\\
${}^1$ School of Mathematics and Statistics, \\
Northeast Normal University, Changchun, Jilin, 130024, China. \\
${}^2$ School of Mathematical Sciences, \\
Tiangong University, Tianjin, 300387, China. \\
${}^3$ Department of Mathematics and Statistics, \\
University of Strathclyde, Glasgow G1 1XH, U.K. \\
}

\date{}

\maketitle

\begin{abstract}

This paper addresses the challenging numerical simulation of nonlinear hybrid stochastic functional differential equations with infinite delays. We first propose an explicit scheme using space and time truncation, requiring only finite historical storage. Leveraging approximation theory, we prove the boundedness of the numerical solution's $ p$th moment and establish its convergence, achieving a rate of $1/2$ order under polynomially growing coefficients. Furthermore, we refine the scheme to better capture the underlying exponential stability of the exact solution, in both moment and almost sure senses. Finally, numerical experiments are presented to validate our theoretical results.

\medskip \noindent
{\small\bf Key words: }  Stochastic functional differential equations; Infinite delay; Numerical solutions; Strong convergence; Stability
\end{abstract}

\section{Introduction}
Long-term memory exists in many physical phenomena, such as viscoelasticity, population dynamics, thermodynamics. The research can be traced back to Boltzmann and Volterra \cite{B74, V12} and has made great progress \cite{C76, FGP10, K93, RHN87}.
The hybrid systems modulated by Markov chains involving continuous dynamics and discrete events are often used for modelling the systems experiencing the abrupt changes in wireless communications, signal processing, queueing networks \cite{MY06, YZ10}.
Hybrid stochastic functional differential equations (HSFDEs) with long-term memory attracted much attention owing to the significance as well as wide applications
\cite{LX21,SWW22, WYM17}.
This paper aims to study the HSFDE  with infinite delay  (HSFDEwID)
\begin{equation}\label{ISFDE}
\d x(t)= f(x_t, \o(t))\d t + g(x_t, \o(t))\d B(t),\quad t\ge 0
\end{equation}
with the initial data
\begin{equation}\label{2.3}
(x_0 , \o(0)) = (\xi, i_0) \in \CC_r\times \SM.
\end{equation}
Here, $x_t(u) = x(t+u),~ u \le 0,~ x_t(\cdot) \in \CC_r$,
$
f : \CC_r \K \SM \ra \RR^n,~g : \CC_r \K \SM \ra \RR^{n \K d},
$
$\theta(\cdot)$ is a right-continuous irreducible Markov chain taking values in $\SM$ and $\CC_r$ is the phase space (see Section \ref{S2} for details). Since most of such systems cannot be solved explicitly, numerical solutions become a viable alternative.
Therefore, our main aim is to propose easily implementable explicit numerical schemes and to approximate the dynamical behaviours of \eqref{ISFDE} in finite and infinite horizons.

Over the past decade, HSFDEs have been extensively studied. For the finite delay case, Luo et al. \cite{LMS11} studied well-posedness, asymptotic boundedness and stability. Dang et al. \cite{DNN21} investigated stability in distribution, and Hairer et al. \cite{HMS11} focused on ergodicity. 
We can also see \cite{BS17, KS20} and the references therein for more developments on HSFDEs with finite delay.	
Recently, the theory of HSFDEswID has
received considerable attention. For the case of Lipschitz-type coefficients, the well-posedness has been established in \cite{LLMS23, LX21, WYM17}. Besides, Wu et al. \cite{WYM17} proved the existence and uniqueness of the invariant probability measure for SFDEs with infinite delay. Bao et al. \cite{BWY20} obtained the exponential ergodicity for neutral SFDEs with infinite delay in the Wasserstein distance. The theories on the ergodicity and asymptotic log-Harnack inequality were also investigated in \cite{BWY19, LX21, SWW22}. Moreover, for the case of non-Lipschitz coefficients, we refer the reader to \cite{WWYZ22} and the references therein.

Although significant progress has been made in the study of the HSFDEs  dynamical behaviors, most physical models are nonlinear and cannot be solved explicitly.
Therefore, developing efficient numerical methods to approximate their dynamics in finite and infinite horizons is of great importance. Up to now, the existing works on the numerical methods of HSFDEs focus on finite delay cases,  for instance, the Euler–Maruyama (EM) scheme \cite{BSY23, CDHS25, MY06, WM08}, the backward EM scheme \cite{SWMW24, ZX191}, the $\theta$-EM scheme \cite{CDHS24}, the truncated EM scheme \cite{GMY18, LMS24, SHGL22}, and the references therein. On the other hand,
Liu-Wang \cite{LW12} and Liu \cite{L15} employed the EM scheme and the Milstein scheme  to perform numerical experiments by computer to support their theoretical results on the persistence and extinction of the biological populations described by HSFDEswID without rigorous numerical analysis.
The numerical results on  HSFDEswID are still rare. Under the global Lipschitz condition, Li et al. \cite{LLMS23} studied a large class of HSFDEswID  (see \cite[(4.7)]{LLMS23} for details) and established the strong convergence rate of order 1/2 for the EM scheme. Recently, Zhou et al. \cite{ZXM25} proposed a truncated EM (TEM) scheme and established strong convergence with a rate less than $1/2$ for  the super-linear HSFDEswID. 
It is worth noting that both of them are restricted to systems with a specific form given in \cite[(4.7)]{LLMS23}.

Many important HSFDEwID models exhibit characteristics of not only super-linear coefficients but also highly complex structures \cite{K93, L15, LW12, YW22}. 
Thus, our main aim is to propose an easily implementable scheme for a large class of super-linear HSFDEswID with the generic form \eqref{ISFDE} and to establish the stability of the numerical solutions. 
Due to the super-linearity of the coefficients, the classical EM scheme fails to achieve strong convergence for super-linear HSFDEswID \cite{ZXM25}. Notably, the EM method fails even for classical stochastic differential equations (SDEs) \cite{HJK11}. Consequently, overcoming the super-linear growth of the drift and diffusion coefficients presents a significant challenge for constructing appropriate numerical schemes for \eqref{ISFDE}. 
The delay interval is an infinite horizon, and the corresponding phase space $\CC_r$ is an infinite-dimensional Banach space with a more intricate structure than that in finite delay systems and classical SDEs \cite{KS80, MN89, WYM17} (that is, $x_t$ contains an infinite amount of historical information and belongs to $\CC_r$). These two features make the computation of $x_t$ particularly difficult, thereby placing considerable demands on the design and analysis of numerical schemes.
Storage requirements represent a major computational challenge in the analysis of numerical methods for systems with memory \cite{LLS08, SLL06, ZX19}, especially for infinite-memory systems \cite{M12}. Intuitively, a longer delay interval requires the storage of more historical data. Therefore, it is highly desirable to construct numerical schemes that can approximate \eqref{ISFDE} in both finite and infinite horizons while requiring less storage.

In this paper, adapting both the space and time truncation techniques, we construct an easily implementable TEM scheme for a class of super-linear HSFDEswID and establish its strong convergence along with convergence rate. Furthermore, we develop a more refined numerical scheme to approximate the stability of the original SFDE. More precisely, the main contributions are as follows.
\begin{itemize}
\item 
To handle the super-linearity of drift and diffusion coefficients, we construct a well designed space-truncation mapping based on the growth rates of the coefficients,  thereby defining a TEM scheme. This scheme avoids possible large excursions caused by the super-linear growth of the coefficients, which allows us to establish its strong convergence in the $q$th moment.
Furthermore, the $1/2$-order convergence rate is established for the TEM numerical solution under certain additional polynomial growth conditions. 

\item 
To overcome the difficulties arising from infinite delay and the infinite-dimension of phase space, we truncate the infinite delay in the time horizon and construct an explicit numerical segment process $X^{k,\Delta}_{t_j}$ (with finite historical information) by the linear interpolation of $\{X^{k,\Delta}({t_{j+m}})\}_{-kl \le m \le 0}$.  We succeed to prove that the numerical segment process $X^{k,\Delta}_{t_j}$ converges to the exact $x_t$ (infinite historical information).

\item 
As mentioned above, the delay truncation device enables storing only $kl+1$ historical grid points, which reduces the storage cost to $O((kl+1)n)$ (see Section \ref{S3} for details).

\item 
We propose a more precise TEM scheme, whose numerical solution keeps the underlying stability of the HSFDEswID. This new TEM scheme only stores finite historical grid points with a storage cost $O((kl+1)n)$, which does not increase with time going by (see Section \ref{S5} for more details).
\end{itemize}

The rest of the paper is organized as follows. 
Section \ref{S2} introduces notation and preliminary results.
Section \ref{S3} proposes an explicit TEM scheme and establishes the strong convergence of the numerical solution.
Section \ref{S4} studies the convergence rate of the numerical solution under slightly stronger assumptions.
Section \ref{S5} proposes a refined scheme and investigates the numerical exponential stability.
Section \ref{S6} presents an example to illustrate our results.

\section{Preliminaries}\label{S2}
We begin with some notation used in this paper. Let $\NN=\{0, 1, \cdots\}$, $\NN_+=\{1, 2, \cdots\}$ and $d, n, N \in \NN_+$.
Let $\RR^n$ be the $n$-dimensional Euclidean space and $\BB(\RR^n)$ denote the Borel algebra on $\RR^n$. For $x \in \RR^n$, we denote by $|x|$ its Euclidean norm. If $A$ is a matrix, we denote by $A^\mathrm{T}$ the transpose of $A$ and let $|A|$ denote the trace norm, that is, $|A| = \sqrt{\trace(A^{\rm T} A)}$. 
Let $\RR_+ = [0, +\infty)$ and $\RR_- = (-\infty, 0]$.
If $a, b \in \RR$, define $a \wedge b = \min{a, b}$, $a \vee b = \max{a, b}$, and let $\lfloor a \rfloor$ denote the integer part of $a$. 
For $c = (c_1, \cdots, c_N)$, define $\c c = \max_{1\le i \le N} c_i$ and $\hat c = \min_{1\le i \le N} c_i$.
If $\DD$ is a set, its indicator function is denoted by $\II_\DD$, namely, $\II_\DD(x) = 1$ if $x \in \DD$ and $0$ otherwise. Throughout the paper, let $C$ denote a generic positive constant whose value may vary from line to line while we use $C_\zeta$  to emphasize the dependence on the parameter $\zeta$.

Let $(\W, \CF, \PP)$ be a complete probability space equipped with a filtration $\{\CF_t\}_{t\ge 0}$ satisfying the usual conditions (i.e., it is right continuous and increasing while $\CF_0$ contains all $\PP$-null sets), and $\EE$ denote the expectation corresponding to $\PP$.
Let $\{B(t)\}_{t\ge 0}$ be a $d$-dimensional Brownian motion defined on this probability space. Suppose that $\{\o(t)\}_{t\ge 0}$ is a right-continuous irreducible Markov chain taking values in the state space $\SM = \{1, 2, \cdots , N\}$, and independent of the Brownian motion $\{B(t)\}_{t \ge 0}$.
The generator of $\{\o(t)\}_{t\ge 0}$ is denoted by $Q = (q_{ij})_{N \K N}$ such that for any $\de > 0$,
$$
\PP(\o(t+\de)=j|\o(t)=i) =
\begin{cases}
q_{ij}\de + o(\de), \quad &\text{ if } i \neq j, \\
1 + q_{ii}\de + o(\de), \quad &\text{ if } i = j,
\end{cases}
$$
where $o(\de)$ satisfies $\lim_{\de \ra 0} o(\de)/\de = 0$. Here $q_{ij} \ge 0$ is the transition rate from state $i$ to state $j$ if $i \neq j$, while $q_{ii} = -\sum_{j \neq i} q_{ij}$. It is well known that almost every sample path of $\o(t)$ is a right-continuous step function with  finite jumps in any finite interval of $\RR_+$.

Denote by $C(\RR_-; \RR^n)$ the set of continuous functions $\p : \RR_-\rightarrow\RR^n$. Fix a positive number $r$ and define the phase space $\CC_r$ with fading memory by
\begin{equation}
\CC_r =  \bigg\{ \p\in C(\RR_-; \RR^n): \|\p\|_r:=\sup_{-\8<u\le0}e^{ru}|\p(u)| <\8 \bigg\}.
\end{equation}
One observes that $(\CC_r, \|\cdot\|_r)$ is a Banach space (see \cite{HMN91} for more details).
Moreover, denote by $\CP_0$ the family of probability measures $\m$ on $\RR_-$. For each $a>0$, define
$$
\CP_a = \bigg\{ \m \in \CP_0: \int_{-\8}^0 e^{-au} \m(\d u)<\8 \bigg\}.
$$
Furthermore, let $\mu^{(a)} := \int_{-\8}^0 e^{-a u} \mu(\d u)$
for each $\mu\in \CP_a$. Clearly, $\CP_{a_1} \subset \CP_{a_2}\subset \CP_0$ for $a_1 >a_2 >0$.  Moreover, for any $\mu\in \CP_{a_1}$, $\mu^{(a)}$
is strictly increasing and continuous with respect to $a \in [0,a_1]$.

For the regularity and the $p$th moment boundedness of the exact solution of \eqref{ISFDE}, we impose the following assumptions.
\begin{assp}\label{Lip}
There exists a  measure $\mu_1 \in \CP_{2r}$ and
a constant $K_R > 0$ such that
\begin{equation*}
|f(\p, i)-f(\f, i)|^2 \vee |g(\p, i)-g(\f, i)|^2  \le
  K_R  \int_{-\infty}^0 |\p(u)-\f(u)|^2\mu_1(\d u) 
\end{equation*}
for $i \in \SM$ and $\p, \f\in \CC_r$ with $\|\p\|_r\vee\|\f\|_r \le R$ for any $R>0$.
\end{assp}

\begin{assp}\label{mon}
There exist an  integer $p \ge 1$, constants $\bar p \ge 2p$, $a_1>0$, $a_2 \ge 0$
and $\m_2 \in \CP_{2pr}$, $\m_3 \in \CP_{\bar p r}$ such that for any $\p \in \CC_r$ and $i \in \mathbb{S}$,
$$
\begin{aligned}
     & |\phi(0)|^{2(p-1)} \left( 2 \langle \p(0), f(\p,i) \rangle  + (2p-1)|g(\p,i)|^2 \right) \\
 \le &  a_1\left(1 +\int_{-\infty}^0 |\p(u)|^{2p} \mu_2(\d u)\right) - a_2|\p(0)|^{\bar p} + a_2 \int_{-\8}^0 |\p(u)|^{\bar p} \m_3(\d u).
\end{aligned}
$$
\end{assp}

Now we prepare the well-posedness for the exact solution of \eqref{ISFDE}, which can be proved in the similar way as \cite[Theorem 2.3]{LLMS23}.
\begin{theorem}\label{th2.3}
Let {\rm Assumptions} \ref{Lip} and \ref{mon} hold. Then HSFDEwID \eqref{ISFDE} with the initial data \eqref{2.3} has a unique global solution $x(t)$ for $t\in (-\infty, \infty)$ satisfying
\begin{equation}\label{BISFDE}
\sup_{0\le t\le T}\EE|x(t)|^{2p} \le C_{T}, \quad \forall T>0,
\end{equation}
\end{theorem}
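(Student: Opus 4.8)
The plan is to follow the classical Khasminskii scheme for local existence plus non-explosion, adapted to the fading-memory phase space $\CC_r$. First I would exploit Assumption \ref{Lip} to produce a unique local solution: truncating $f$ and $g$ outside the ball $\{\,\|\cdot\|_r\le R\,\}$ yields globally Lipschitz coefficients, so the standard theory gives a unique solution up to the explosion time $\sigma_\8=\lim_{R\to\8}\T_R$, where $\T_R=\inf\{t\ge0:|x(t)|\ge R\}$. Since the bounded initial segment $\xi$ and the exponential weight $e^{ru}$ force $\|x_t\|_r$ to stay finite whenever $|x(s)|$ is finite on $[0,t]$, the entire difficulty reduces to showing $\sigma_\8=\8$ a.s., and this will be obtained simultaneously with the moment bound \eqref{BISFDE}.

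The core estimate comes from applying It\^o's formula to $|x(t\we\T_R)|^{2p}$. A direct computation of the generator of $|x|^{2p}$, together with the elementary inequality $|g^{\mathrm T}x|^2\le|x|^2|g|^2$ applied to the second-order term, gives
\begin{equation*}
\CL|x|^{2p}\le p|x|^{2(p-1)}\big(2\langle x,f(x_t,\o(t))\rangle+(2p-1)|g(x_t,\o(t))|^2\big),
\end{equation*}
which is precisely $p$ times the left-hand side of Assumption \ref{mon}. Invoking that assumption bounds $\CL|x|^{2p}$ above by $pa_1\big(1+\int_{-\8}^0|x(t+u)|^{2p}\m_2(\d u)\big)-pa_2|x(t)|^{\bar p}+pa_2\int_{-\8}^0|x(t+u)|^{\bar p}\m_3(\d u)$. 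Taking expectations kills the martingale term, and integrating in time leaves a closed inequality for $m(t):=\sup_{0\le s\le t}\EE|x(s\we\T_R)|^{2p}$.

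The decisive technical step is controlling the two history integrals. For each I would split $\int_{-\8}^0|x(s+u)|^{\,\cdot\,}\m(\d u)$ at $u=-s$: on $(-\8,-s]$ the argument $s+u$ is nonpositive, so $x(s+u)=\xi(s+u)$ and $|\xi(s+u)|\le e^{-r(s+u)}\|\xi\|_r$, while on $(-s,0]$ the argument is a time the solution has already reached. Integrating in $s$ and applying Fubini, the contributions from the infinite past are finite constants exactly because $\m_2\in\CP_{2pr}$ and $\m_3\in\CP_{\bar p r}$ render $\m_2^{(2pr)}$ and $\m_3^{(\bar p r)}$ finite, and the solution contributions are dominated by $\int_0^t\EE|x(s\we\T_R)|^{2p}\,\d s$ and $\int_0^t\EE|x(s\we\T_R)|^{\bar p}\,\d s$ respectively. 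The crucial cancellation is that the positive, super-linear $\bar p$-moment history term is absorbed by the dissipative $-pa_2|x(t)|^{\bar p}$ term, so the $\bar p$-moments drop out and only a $2p$-moment inequality $m(t)\le C_1+C_2 t+pa_1\int_0^t m(s)\,\d s$ survives.

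Gronwall's inequality then gives $m(t)\le C_T$ uniformly in $R$, after which the standard estimate $\PP(\T_R\le T)\le C_T/R^{2p}\to0$ forces $\sigma_\8=\8$ a.s.; letting $R\to\8$ through Fatou's lemma delivers \eqref{BISFDE}. I expect the main obstacle to be the bookkeeping for the infinite delay: one must verify that the fading-memory weight $e^{ru}$ of $\CC_r$ and the exponential-moment bounds on $\m_2,\m_3$ interact so that every term inherited from $(-\8,0]$ is genuinely finite and uniform in $t$, and that the time-shift cancellation of the $\bar p$-moments goes through cleanly even with the stopping time $\T_R$ inserted, since this is exactly where the unbounded memory could otherwise destroy the estimate.
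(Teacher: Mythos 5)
Your proposal is correct and takes essentially the same route as the paper: the paper proves Theorem \ref{th2.3} simply by citing \cite[Theorem 2.3]{LLMS23}, whose argument is exactly this Khasminskii-type scheme (local existence via truncation under Assumption \ref{Lip}, It\^o's formula for $|x(t\wedge\tau_R)|^{2p}$, splitting the history integrals at $u=-s$ so that the fading-memory bound on $\xi$ and $\mu_2\in\CP_{2pr}$, $\mu_3\in\CP_{\bar p r}$ control the infinite past, absorption of the $\bar p$-moment terms by the dissipative term, then Gronwall and the standard non-explosion estimate). Your identification of the time-shift cancellation of the $\bar p$-terms as the decisive step, and the observation that it survives insertion of the stopping time, matches the intended argument.
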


To construct the appropriate numerical solution of the HSFDEwID \eqref{ISFDE}, we cite the approximation theory established in \cite{LLMS23}, which revealed that the solutions of HSFDEwID can be approximated by those of the time-truncated HSFDEs with finite delay information, whose segment processes only take values in finite interval. Precisely,
for any integer $k \ge 1$, define a time-truncation mapping $\pi_k:\CC_r\ra\CC_r$ by
$$
\pi_k(\f)(u)=
\begin{cases}
  \f(u), & \mbox{if } u \in [-k,0], \\
  \f\left(-k\right),  & \mbox{if }   u \in \left(-\8,-k\right).
\end{cases}
$$
Then define $f_k:\CC_r \K\SM \to \RR^n$ and $g_k:\CC_r\K\SM\to\RR^{n \K d}$ by
$$
f_k(\f,i)=f(\pi_k(\f),i),~
g_k(\f,i)=g(\pi_k(\f),i), ~\forall i\in \SM.
$$
Thus the time-truncated HSFDE is defined as
\begin{equation}\label{TSFDE}
\begin{aligned}
\d x^k(t) & =f_k(x^k_t,\o(t))\d t + g_k(x^k_t,\o(t))\d B(t) \\
\end{aligned}
\end{equation}
on $t \ge 0$ with initial data \eqref{2.3},  where $x^k_t(u) = x^k(t+u) $ for any $ u \le 0.$ 

According to \cite[Theorem $3.1$, Theorem $3.4$]{LLMS23}, one notices that the solution $x^k(t)$ of the time-truncated HSFDE \eqref{TSFDE} converges to $x(t)$ in the $q$th moment. 
\begin{lemma}\label{lemma3.1}{\bf \cite[Theorem $3.1$, Theorem $3.4$]{LLMS23}}
Let Assumptions \ref{Lip} and \ref{mon} hold. Then the time-truncated HSFDE \eqref{TSFDE} with initial data \eqref{2.3} has a unique global solution $x^k(t)$ on $t\in (-\infty, \infty)$. For any $k \ge 1$, define
\begin{equation}\label{rkh}
\tau^k_h = \inf\{t \ge 0: |x^k(t)|\ge h\}.
\end{equation} Then for any $T > 0$,
\begin{equation}\label{BTSFDE}
\sup_{k \ge 1} \sup_{0\le t\le T} \EE |x^k(t)|^{2p} \le C_{T,\xi},~~~~ 
\sup_{k \ge 1}\PP\{\tau^k_h \le T \} \le \frac{C_{T,\xi}}{h^{2p}}.
\end{equation}
Moreover, if $\mu_1 \in \CP_{b}$ with $b>2r$, then
\begin{equation}\label{con}
\lim_{k \to +\infty} \left( \sup_{0\le t\le T}\EE |x^k(t)-x(t)|^q \right) = 0,~ \forall 
q \in (0, 2p),~ T > 0.
\end{equation}
\end{lemma}

With the help of Lemma \ref{lemma3.1},  we may construct  an appropriate scheme for the corresponding time-truncated HSFDE \eqref{TSFDE} and then use it to numerically approximate \eqref{ISFDE}.

\section{Explicit scheme and convergence} \label{S3}
This section is devoted to constructing an explicit numerical scheme for HSFDEswID \eqref{ISFDE} and establishing its strong convergence theory.
This section first constructs an easily implementable numerical scheme for \eqref{TSFDE}, establishes the boundedness of the $2p$th moment of the numerical solution, and then proves the strong convergence of the numerical solution to the exact  of \eqref{TSFDE}. By the bridge of Lemma \ref{lemma3.1}, we further yield  the desired convergence of the numerical solution to the exact of \eqref{ISFDE}.

 Given a step size $\Delta > 0$, let $t_j = j \Delta$ and $\theta_j = \theta(t_j)$. Then $\{ \theta_j \}_{j \in \mathbb{N}}$ forms a discrete-time Markov chain with one-step transition probability matrix $e^{\Delta Q}$. This discrete Markov chain can be simulated by the technique  in \cite[p.~112]{MY06}.
To construct the scheme, we begin with  estimating the growth rates of $f$ and $g$.
Under Assumption \ref{Lip}, we  choose an increasing function $\G: \RR_+ \to \RR_+$ such that $\G(R) \to +\infty$ as $R \to +\infty$ 
and
\begin{equation}\label{G}
\begin{aligned}
& |f(\p,i)|^2  \le \G^2(R) \left( 1 + \int_{-\8}^0 |\p(u)|^2 \mu_1(\d u) \right), \\
& |g(\p,i)|^2 \le  \G(R) \left( 1 + \int_{-\8}^0 |\p(u)|^2 \mu_1(\d u) \right), \\
\end{aligned}
\end{equation}
for any $(\p, i) \in \CC_r \K \SM$ with $\|\p\|_r \le R$. 
Let $\G^{-1}$ be the inverse function of $\G$, where $\G^{-1}:[\G(0), +\infty)\to\RR_+$.   Note that there can be many functions $\G$ satisfying inequality \eqref{G}; here, finding just one is sufficient, as it does not affect the subsequent conclusions. 
Without loss of generality, assume that there exists a positive integer $l$ such that $\D=1/l\in (0, 1]$. Define a truncation
mapping $\Lambda^\Delta:\RR^n \to \RR^n$ by
{ \begin{align}\label{L}
\L^\D(x)=\bigg( |x|\we\G^{-1}( \D^{-\lbd})  \bigg)\frac{x}{|x|}, \quad \forall x\in\RR^n.
\end{align}}
where the parameter $\lbd \in (0,1/2]$ and $x/|x|=0$ if $x=0$.
Next, we propose a numerical scheme for \eqref{TSFDE} called the truncated Euler-Maruyama (TEM) method. For any integers $k\geq 1$ and  $j$,
define
{ \begin{align}\label{TEM}
\begin{cases}
Y^{k, \D}(t_j)=\xi(t_j), ~-kl \le j \le 0,\\
X^{k, \D}({t_j})=\L^{\D}(Y^{k, \D}(t_j)), ~ j \ge -kl, \\
Y^{k, \D}(t_{j+1})=X^{k, \D}(t_j)+f(X^{k, \D}_{t_j},\o_j)\D
+ g( X^{k, \D}_{t_j},\o_j)\D B_j, ~ j \ge 0, \\
\end{cases}
\end{align}}
where $\D B_j=B(t_{j+1})-B(t_j)$ and $X^{k, \D}_{t_j}$ is a $\CC_r$-valued random variable defined by
\begin{equation}\label{LI}
X^{k, \D}_{t_j}(u)=\\
\left\{\begin{aligned}
& \frac{t_{m+1}-u}{\D} X^{k, \D}(t_{j+m}) + \frac{u-t_m}{\D} X^{k, \D}(t_{j+m+1}),\\
 & ~~~~~~~~~~~~~~~t_m \le u \le t_{m+1},~-k l \le m\le -1,\\
& X^{k, \D}(t_{j-k l}),~u < -k,
\end{aligned}
\right.
\end{equation}
Define the continuous-time numerical solution processes 
\begin{equation}\label{conti-time-TEM}
X^{k, \D}(t)=X^{k, \D}(t_j),\ Y^{k, \D}(t) = Y^{k, \D}(t_j), ~ \forall t \in [t_j, t_{j+1}), ~ { j \ge -kl,}
\end{equation}
and the numerical segment process
\begin{equation}\label{n-seg}
X^{k,\Delta}_t = X^{k,\Delta}_{t_j}, \quad \forall t \in [t_j,t_{j+1}), j \in \NN.
\end{equation}
Here we highlight the features  of our TEM scheme.
\begin{itemize}
\item 
It follows from \eqref{LI} that for each $j \in \mathbb{N}$, we only need to store $\{X^{k,\D}(t_{j+m})\}_{m=-kl}^{0}$ in active memory to compute $X^{k,\D}(t_{j+1})$. Consequently, the TEM scheme is suitable for the  computer operation and has only an $O((kl+1)n)$ storage cost despite involving an infinite delay.

\item 
The numerical solution $X^{k, \D}(t_j)$ is obtained by truncating the intermediate term $Y^{k, \D}(t_j)$ in the space $\RR^n$ to avoid its possible large excursions due to the nonlinearities of the coefficients and the Brownian motion increments. Moreover, from \eqref{G} one observes that
{ \begin{equation}\label{linear}
\begin{aligned}
\big| f(X^{k, \D}_{t_j},i) \big|^2 &\le \D^{-2\lbd}\left( 1 + \int_{-\8}^0 \big| X^{k, \D}_{t_j}(u) \big|^2  \mu_1(\d u) \right),\\
\big| g(X^{k, \D}_{t_j},i) \big|^2  &\le \D^{-\lbd} \left( 1 + \int_{-\8}^0 \big| X^{k, \D}_{t_j}(u) \big|^2 \mu_1(\d u) \right)
\end{aligned}
\end{equation}}
for any $j \in \NN$ and $i \in \SM$.

\item 
If $f$ and $g$ are globally Lipschitz continuous, which implies $K_R \equiv K$ for any $R > 0$, then let $\G(R) \equiv K$ and $\G^{-1}(R)=\8$. Thus, $\L^{\D}(x)=x$ for any $\Delta \in (0, 1]$, $x\in\RR^n$.  Obviously, the TEM scheme degenerates to the classical EM scheme.
\end{itemize}

\subsection{Moment boundedness }\label{s3.1}
By the similar argument as in \cite[Theorem 3.1]{SHGL22} one obtains $$\sup_{0 < \Delta \le 1}\sup_{0 \le t \le T} \EE \big|X^{k,\D}(t) \big|^{2p} \leq C_k.$$  Here the upper bound  depends on $k$. However we need the bound   independent of $k$ in order for the convergence analysis of the numerical solution. Therefore, we give the uniform boundedness of the TEM numerical solution.

\begin{theorem}\label{th-BTN}
Let Assumptions \ref{Lip} and \ref{mon} hold. Then 
\begin{equation}\label{BTN}
\sup_{k \ge 1} \sup_{0 < \Delta \le 1}\sup_{0 \le t \le T} \EE \big|X^{k,\D}(t) \big|^{2p} \le C_T (1+\|\xi\|_r^{2p} + \|\xi\|_r^{\bar p}), \quad \forall T>0.
\end{equation}
\end{theorem}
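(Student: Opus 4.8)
The plan is to derive a one-step recursive estimate for $M_j:=\EE|X^{k,\D}(t_j)|^{2p}$ whose constants are independent of both $k$ and $\D$, and then to close it by a discrete Gronwall argument applied to the running maximum $\bV_j:=\max_{0\le i\le j}M_i$. Since the continuous-time solution \eqref{conti-time-TEM} is piecewise constant, it suffices to bound $M_j$ for $0\le j\le\lfloor T/\D\rfloor$. Writing $\chi_j:=f(X^{k,\D}_{t_j},\o_j)\D+g(X^{k,\D}_{t_j},\o_j)\D B_j$ so that $Y^{k,\D}(t_{j+1})=X^{k,\D}(t_j)+\chi_j$, I would expand $|Y^{k,\D}(t_{j+1})|^{2p}=\big(|X^{k,\D}(t_j)|^2+2\langle X^{k,\D}(t_j),\chi_j\rangle+|\chi_j|^2\big)^{p}$ by the binomial theorem and take $\EE[\,\cdot\mid\CF_{t_j}]$. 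Using $\EE[\D B_j\mid\CF_{t_j}]=0$, $\EE[\D B_j\D B_j^{\rm T}\mid\CF_{t_j}]=\D I$, and the elementary bound $|g^{\rm T}x|^2\le|x|^2|g|^2$, the leading correction assembles exactly into $p\D\,|X^{k,\D}(t_j)|^{2(p-1)}\big(2\langle X^{k,\D}(t_j),f(X^{k,\D}_{t_j},\o_j)\rangle+(2p-1)|g(X^{k,\D}_{t_j},\o_j)|^2\big)$, i.e. $p\D$ times the left-hand side of Assumption \ref{mon}, since $X^{k,\D}_{t_j}(0)=X^{k,\D}(t_j)$.

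Invoking Assumption \ref{mon} then bounds this leading correction by $p\D\big[a_1(1+\int_{-\8}^0|X^{k,\D}_{t_j}(u)|^{2p}\m_2(\d u))-a_2|X^{k,\D}(t_j)|^{\bar p}+a_2\int_{-\8}^0|X^{k,\D}_{t_j}(u)|^{\bar p}\m_3(\d u)\big]$. After taking conditional expectations, every remaining term of the expansion is, by the growth bounds \eqref{linear} — available precisely because $\L^\D$ forces $|X^{k,\D}(t_j)|\le\G^{-1}(L\D^{-\lbd})$ — together with Young's and Jensen's inequalities, of order at most $C\D^{2-2\lbd}(1+|X^{k,\D}(t_j)|^{2p}+\int_{-\8}^0|X^{k,\D}_{t_j}(u)|^{2p}\m_1(\d u))$. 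Since $\lbd\le1/2$ gives $\D^{2-2\lbd}\le\D$, and since $|\L^\D(y)|\le|y|$ implies $\EE[|X^{k,\D}(t_{j+1})|^{2p}\mid\CF_{t_j}]\le\EE[|Y^{k,\D}(t_{j+1})|^{2p}\mid\CF_{t_j}]$, taking expectations yields $M_{j+1}\le(1+C\D)M_j+C\D+C\D\,\EE\!\int_{-\8}^0|X^{k,\D}_{t_j}(u)|^{2p}(\m_1+\m_2)(\d u)-pa_2\D\,\EE|X^{k,\D}(t_j)|^{\bar p}+pa_2\D\,\EE\!\int_{-\8}^0|X^{k,\D}_{t_j}(u)|^{\bar p}\m_3(\d u)$. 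For fixed $\D$ each $|X^{k,\D}(t_j)|\le\G^{-1}(L\D^{-\lbd})$ is deterministically bounded, so all moments occurring here are finite and the rearrangements below are legitimate.

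The delay integrals are where uniformity in $k$ becomes the crux. By convexity of $|\cdot|^{2p}$ and the piecewise-linear definition \eqref{LI}, $\int_{-\8}^0|X^{k,\D}_{t_j}(u)|^{2p}\m(\d u)$ is dominated by a convex combination $\sum_i\rho_{j,i}|X^{k,\D}(t_i)|^{2p}$ of past grid values, the tail $u<-k$ contributing $\m((-\8,-k))|X^{k,\D}(t_{j-kl})|^{2p}$; the total weight placed on computed indices never exceeds $\m(\RR_-)=1$, while the part reaching negative indices is initial data bounded through $|\xi(u)|\le e^{-ru}\|\xi\|_r$ and $\m_1,\m_2\in\CP_{2pr}$ by $\|\xi\|_r^{2p}\m^{(2pr)}<\8$. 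Hence the $\m_1,\m_2$ contribution is at most $C\D(\bV_j+\|\xi\|_r^{2p})$ with $k$-independent constants, precisely because the redistributed mass is capped by the total measure $1$ rather than by the window length. For the superlinear $\bar p$-terms I would not estimate them pointwise but sum the recursion over $j=0,\dots,J-1$ and reverse the order of summation: using that $\m_3$ is a probability measure gives $\sum_j\EE\!\int_{-\8}^0|X^{k,\D}_{t_j}(u)|^{\bar p}\m_3(\d u)\le\sum_i\EE|X^{k,\D}(t_i)|^{\bar p}+C\|\xi\|_r^{\bar p}\m_3^{(\bar p r)}$, so the positive superlinear mass is absorbed exactly by the telescoped negative term $-pa_2\D\sum_j\EE|X^{k,\D}(t_j)|^{\bar p}$, leaving only a constant multiple of $\|\xi\|_r^{\bar p}$. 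This cancellation is again $k$-uniform because the total $\m_3$-mass redistributed across time equals $1$ however the window $[-k,0]$ splits between computed and initial segments; this is exactly what removes the restriction $k\ge T$ of \cite[Lemma 5.4]{LLMS23} and constitutes the main obstacle of the proof.

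Collecting the estimates and summing over $j$, the superlinear terms cancel as above and one is left with $\bV_J\le C_T(1+\|\xi\|_r^{2p}+\|\xi\|_r^{\bar p})+C\D\sum_{j=0}^{J-1}\bV_j$, where $C_T$ enters through $C\D\cdot J\le CT$ but depends on neither $k$ nor $\D$; here one uses that the right-hand side is nondecreasing in $J$ to pass from $M_J$ to the maximum $\bV_J$. The discrete Gronwall inequality then gives $\bV_J\le C_T(1+\|\xi\|_r^{2p}+\|\xi\|_r^{\bar p})e^{CT}$, and since $\sup_{0\le t\le T}\EE|X^{k,\D}(t)|^{2p}=\bV_{\lfloor T/\D\rfloor}$, the bound \eqref{BTN} follows with a constant uniform in $k\ge1$ and $\D\in(0,1]$.
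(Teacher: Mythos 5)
Your overall strategy coincides with the paper's own proof: expand $|Y^{k,\D}(t_{j+1})|^{2p}$ binomially, use the conditional moments of $\D B_j$ so that the leading correction is exactly $p\D$ times the left-hand side of Assumption \ref{mon}, control the higher-order terms through \eqref{linear}, exploit that the delay integrals redistribute at most unit mass over computed grid points (which is what removes the restriction $k \ge T$), cancel the $\bar p$-terms exactly against $-pa_2\D\sum_j\EE|X^{k,\D}(t_j)|^{\bar p}$, and close with the discrete Gronwall inequality. However, there is one genuine gap. In your second and third paragraphs you replace the remainder $\big(\int_{-\8}^0|X^{k,\D}_{t_j}(u)|^2\m_1(\d u)\big)^p$ by $\int_{-\8}^0|X^{k,\D}_{t_j}(u)|^{2p}\m_1(\d u)$ via Jensen, and then bound the initial-data part of this integral ``through $|\xi(u)|\le e^{-ru}\|\xi\|_r$ and $\m_1,\m_2\in\CP_{2pr}$''. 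But Assumption \ref{Lip} only provides $\m_1\in\CP_{2r}$, and for $p>1$ one has $\CP_{2pr}\subsetneq\CP_{2r}$: for instance, if $\m_1(\d u)=c\,e^{(2r+\e)u}\d u$ with $0<\e<2r(p-1)$, then $\m_1^{(2r)}<\8$ but $\m_1^{(2pr)}=\8$. Taking the admissible initial datum $\xi(u)=e^{-ru}x_0\in\CC_r$, already the $j=0$ term of your recursion is infinite, since $\int_{-\8}^0|\xi(u)|^{2p}\m_1(\d u)=|x_0|^{2p}\m_1^{(2pr)}=\8$. So the recursion $M_{j+1}\le(1+C\D)M_j+\cdots$ cannot be closed as written; the global Jensen step is precisely where the weaker integrability of $\m_1$ is lost.

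The repair is to apply Jensen only where it is harmless, which is exactly the paper's treatment: split $\int_{-\8}^0=\int_{-\8}^{-t_j}+\int_{-t_j}^0$, with the further case distinction $t_j\le k$ versus $t_j>k$ (the sets $\mathbb{A}$ and $\mathbb{A}^c$ in the paper). On the far-past region the integrand consists of interpolated initial data (plus, when $t_j>k$, the single computed value $X^{k,\D}(t_j-k)$ carrying weight $\m_1\big((-\8,-t_j]\big)$), and one first bounds $\int_{-\8}^{-t_j}|X^{k,\D}_{t_j}(u)|^2\m_1(\d u)\le\|\xi\|_r^2e^{2r}\m_1^{(2r)}+\II_{\mathbb{A}^c}(j)\,\m_1\big((-\8,-t_j]\big)|X^{k,\D}(t_j-k)|^2$ using only the weight $e^{-2ru}$, taking the $p$-th power \emph{afterwards}, so that only $\|\xi\|_r^{2p}$ and $\m_1^{(2r)}$ (not $\m_1^{(2pr)}$) appear. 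Jensen with exponent $p$ is then used only on $\int_{-t_j}^0$, where every value involved is a computed grid point and nothing beyond the total mass of $\m_1$ is needed. With this modification — your handling of $\m_2$ and $\m_3$ is fine, since $\m_2\in\CP_{2pr}$ and $\m_3\in\CP_{\bar p r}$ are genuinely assumed — your argument becomes the paper's proof.
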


\begin{proof}
We focus on the case $p > 2$. By the similar arguments we can prove the desired results for $p=1$ and  $p=2$.
For any $k \ge 1$, $\D \in (0,1]$ and $j \in \NN$, let 
\begin{equation}\label{eq*1}
f^{k,\D}_j = f( X^{k,\D}_{t_j},\o_j),~~~~g^{k,\D}_j = g(X^{k,\D}_{t_j},\o_j).\end{equation}
{ By the third equality of \eqref{TEM},}
\begin{equation*}
\left|Y^{k,\D}(t_{j+1})\right|^2 = \left|X^{k,\D}(t_j)\right|^2 + \eta^{k,\D}_j,
\end{equation*}
where
$$
\eta^{k,\D}_j = \big|f^{k,\D}_j\big|^2\D^2 + \big|g^{k,\D}_j \D B_j\big|^2 +2 \big\langle X^{k,\D}(t_j), f^{k,\D}_j \big\rangle \D
+  2 \big\langle X^{k,\D}(t_j) + f^{k,\D}_j \D,  g^{k,\D}_j \D B_j \big\rangle.
$$
{ By virtue of the second equality of \eqref{TEM},} we derive that
$$
\big|X^{k,\D}(t_{j+1})\big|^{2p} \le \left|Y^{k,\D}(t_{j+1})\right|^{2p} \le  \left( \left|X^{k,\D}(t_j)\right|^2 + \eta^{k,\D}_j \right)^p.
$$
Using the binomial expansion theorem and then taking the conditional expectation with respect to $\CF_{t_j}$ on both sides yield
\begin{equation}\label{b}
\begin{aligned}
    \EE\left( \left|X^{k,\D}(t_{j+1})\right|^{2p} \big| \CF_{t_j}\right) 
  =   \left|X^{k,\D}(t_j)\right|^{2p} + \sum_{m=1}^p C_p^m  \EE \left( \left|X^{k,\D}(t_j)\right|^{2(p-m)}  \left(\eta^{k,\D}_j\right)^m \big| \CF_{t_j} \right),
\end{aligned}
\end{equation}
where $C_p^m = p!/\left(  m! (p-m)! \right)$.
Since $\D B_j$ is independent of $\CF_{t_j}$, it follows that
$$
\EE (\D B_j|\CF_{t_j}) = 0, ~ \EE(|A\D B_j|^2) = |A|^2\D, ~ \forall A \in \RR^{n \K d}.
$$
This, together with \eqref{linear} and Young's inequality, gives
\begin{equation}\label{t1}
\begin{aligned}
   & \EE \left( \left|X^{k,\D}(t_j)\right|^{2(p-1)}  \eta^{k,\D}_j \big| \CF_{t_j} \right) \\
   = & \left|X^{k,\D}(t_j)\right|^{2(p-1)} \left[ \left( 2 \big\langle X^{k,\D}(t_j),  f^{k,\D}_j \big\rangle + \big| g^{k,\D}_j \big|^2 \right)\D + \big| f^{k,\D}_j \big|^2\D^2 \right] \\
   \le & \left|X^{k,\D}(t_j)\right|^{2(p-1)}  \left( 2 \big\langle X^{k,\D}(t_j),  f^{k,\D}_j \big\rangle + \big| g^{k,\D}_j \big|^2 \right)\D \\
   +& C\D \left( 1 + \big|X^{k,\D}(t_j)\big|^{2p} + \left( \int_{-\8}^0 \big|X^{k,\D}_{t_j}(u)\big|^2 \m_1(\d u) \right)^p  \right).
\end{aligned}
\end{equation}
Notice that
$$
\begin{aligned}
    & \mathbb{E}|A\Delta B_j|^{2m} \le (2m-1)!! n
    ^m |A|^{2m} \Delta^m,\ \EE\left[\left( \left\langle z, A \Delta B_j \right\rangle \right)^{2m-1}|\CF_{t_j} \right] = 0,\\
    & \EE\left[\left( \left\langle z, A \Delta B_j \right\rangle \right)^{2m}|\CF_{t_j} \right] = (2m-1)!!|\left\langle z, A\right\rangle|^{2m} \Delta^m,\ \forall m\ge 2
\end{aligned}
$$
for any $z \in \mathbb{R}^n$ and $A \in \mathbb{R}^{n\times d}$, where $(2m-1)!!=1\cdot 3 \cdots (2m-1)$. 
Combining these with \eqref{linear} and Young's inequality leads to
\begin{equation}\label{t2}
\begin{aligned}
     &\EE \left( \left|X^{k,\D}(t_j)\right|^{2(p-2)}  \left(\eta^{k,\D}_j\right)^2 \Big| \CF_{t_j} \right) \\
  \le & \left|X^{k,\D}(t_j)\right|^{2(p-2)}  \EE \Big[\Big( \big|f^{k,\D}_j\big|^4\D^4 + \big|g^{k,\D}_j\D B_j\big|^4
+ 4\big|\big\langle X^{k,\D}(t_j), f^{k,\D}_j\big\rangle\big|^2  \D^2 \\
  & + 4\big|\big\langle X^{k,\D}(t_j), g^{k,\D}_j\D B_j \big\rangle\big|^2 + 4\big|\big\langle f^{k,\D}_j, g^{k,\D}_j\D B_j \big\rangle\big|^2 \D^2 + 2\big|f^{k,\D}_j\big|^2 \big|g^{k,\D}_j\big|^2 \big|\D B_j\big|^2 \D^2  \\
& + 4\big|X^{k,\D}(t_j)\big|\big|f^{k,\D}_j\big|^3\D^3
+  12\big| X^{k,\D}(t_j) \big| \big| f^{k,\D}_j \big| \big|g^{k,\D}_j\D B_j\big|^2 \D \Big) \Big|\CF_{t_j} \Big]\\
\le & 4 \left|X^{k,\D}(t_j)\right|^{2(p-1)} \big|g^{k,\D}_j\big|^2 \D + C\D \left( 1 + \big|X^{k,\D}(t_j)\big|^{2p} + \left( \int_{-\8}^0 \big|X^{k,\D}_{t_j}(u)\big|^2 \m_1(\d u) \right)^p  \right).
\end{aligned}
\end{equation}
Similarly, for any $m=3, \cdots, p$, we have
\begin{equation}\label{t3}
\begin{aligned}
& \EE \left( \left|X^{k,\D}(t_j)\right|^{2(p-m)}  \left(\eta^{k,\D}_j\right)^m \big| \CF_{t_j} \right)  \\
\le & C\D \left( 1 + \big|X^{k,\D}(t_j)\big|^{2p} + \left( \int_{-\8}^0 \big|X^{k,\D}_{t_j}(u)\big|^2 \m_1(\d u) \right)^p  \right).
\end{aligned}
\end{equation}
Inserting \eqref{t1}, \eqref{t2} and \eqref{t3} into \eqref{b} and then using Assumption \ref{mon} provides
\begin{align}\nn \label{t}
   & \EE\left( \left|X^{k,\D}(t_{j+1})\right|^{2p} \big| \CF_{t_j}\right) \\ \nn
\le  & \left|X^{k,\D}(t_j)\right|^{2p} + p \left|X^{k,\D}(t_j)\right|^{2(p-1)} \left( 2 \big\langle X^{k,\D}(t_j), f^{k,\D}_j \big\rangle + (2p-1)\big|g^{k,\D}_j\big|^2  \right) \D \\ \nn
& +  C\D \left( 1 + \big|X^{k,\D}(t_j)\big|^{2p} + \left( \int_{-\infty}^0 \big|X^{k,\D}_{t_j}(u)\big|^2 \m_1(\d u) \right)^p  \right) \\
\le  & \left|X^{k,\D}(t_j)\right|^{2p} + p a_1 \D \left( 1+ \int_{-\8}^0 \big|X^{k,\D}_{t_j}(u)\big|^{2p} \m_2(\d u) \right) \\ \nn
& -  p a_2 \D \left( \left|X^{k,\D}(t_j)\right|^{\bar p} - \int_{-\8}^0 \big|X^{k,\D}_{t_j}(u)\big|^{\bar p} \m_3(\d u) \right) \\ \nn
& +  C\D \left( 1 + \big|X^{k,\D}(t_j)\big|^{2p} + \left( \int_{-\8}^0 \big|X^{k,\D}_{t_j}(u)\big|^2 \m_1(\d u) \right)^p  \right).
\end{align}
Taking expectations in the inequality above and summing from $j=0$ to $n-1$, where $1 \le n \le \lfloor T/\D\rfloor$ (without loss of generality, we assume that $T \ge \D$), we obtain
\begin{equation}\label{diedai}
\begin{aligned}
    \EE \left|X^{k,\D}(t_n)\right|^{2p} 
\le  & \|\xi\|_r^{2p} + CT + C\D \sum_{j=0}^{n-1}  \EE \big|X^{k,\D}(t_j)\big|^{2p}+  I_1(n) \\
+  & I_2(n)
-  p a_2 \D \sum_{j=0}^{n-1} \EE \left|X^{k,\D}(t_j)\right|^{\bar p} + I_3(n) ,
\end{aligned}
\end{equation}
where
$$
\begin{aligned}
& I_1(n) =  C\D \sum_{j=0}^{n-1}\EE \left( \int_{-\8}^0 \big|X^{k,\D}_{t_j}(u)\big|^2 \m_1(\d u) \right)^p,\\
      & I_2(n) = p a_1 \D \sum_{j=0}^{n-1} \EE \int_{-\8}^0 \big|X^{k,\D}_{t_j}(u)\big|^{2p} \m_2(\d u),\\
      & I_3(n) = p a_2 \D \sum_{j=0}^{n-1} \EE \int_{-\8}^0 \big|X^{k,\D}_{t_j}(u)\big|^{\bar p} \m_3(\d u).
\end{aligned}
$$
Obviously,
\begin{equation}\label{Int}
\begin{aligned}
I_1(n) =   C\D \sum_{j=0}^{n-1}\EE \left( \int_{-\8}^{-t_j} \big|X^{k,\D}_{t_j}(u)\big|^2 \m_1(\d u) + \int_{-t_j}^0 \big|X^{k,\D}_{t_j}(u)\big|^2 \m_1(\d u) \right)^p.
\end{aligned}
\end{equation}
Let $\mathbb{A} = \{0, 1, \cdots, kl\}$
and $\mathbb{A}^c = \mathbb{N}/\mathbb{A}$.
If $j \in \{0, \cdots, n-1\} \cap \mathbb{A} $, 
$$
       \int_{-\8}^{-t_j} \big|X^{k,\D}_{t_j}(u)\big|^2 \m_1(\d u) 
      =  \int_{-\8}^{-k} \big|X^{k,\D}_{t_j}(-k)\big|^2 \m_1(\d u) + \int_{-k}^{-t_j} \big|X^{k,\D}_{t_j}(u)\big|^2 \m_1(\d u).
$$
{ Using \eqref{LI}, Jensen's inequality (\cite[p. 53]{MY06}) and the fact that $|\Lambda^{\Delta}(x)| \le |x|$,} we compute for any $j \in \{0, \cdots, n-1\} \cap \mathbb{A}$,
\begin{align} \nn
      &  \int_{-k}^{-t_j} \big|X^{k,\D}_{t_j}(u)\big|^2 \m_1(\d u)  \\  \nonumber
      \le & \sum_{m=-k l}^{-j-1} \int_{t_m}^{t_{m+1}} \left( \frac{t_{m+1}-u}{\D} |\xi(t_{j+m})|^2  + \frac{u-t_m}{\D} |\xi(t_{j+m+1})|^2 \right)  \m_1(\d u) \\  \nonumber
      \le & \|\xi\|_r^2 e^{-2rt_j} \sum_{m=-k l}^{-j-1} \int_{t_m}^{t_{m+1}} \left( \frac{t_{m+1}-u}{\D} e^{-2r t_m}  + \frac{u-t_m}{\D} e^{-2r t_{m+1}} \right) \m_1(\d u) \\  \nn
      \le &  \|\xi\|_r^2 e^{-2rt_j + 2r\D}   \sum_{m=-k l}^{-j-1} \int_{t_m}^{t_{m+1}} e^{-2r t_{m+1}} \mu_1(\d u).
\end{align}
Similarly,
\begin{equation}\label{3.20}
      \int_{-\infty}^{-k} \big|X^{k,\D}_{t_j}(-k)\big|^2 \m_1(\d u) \le \|\xi\|_r^2 e^{-2rt_j} \int_{-\infty}^{-k} e^{-2ru} \m_1(\d u), \quad \forall j \in \{0, \cdots, n-1\} \cap \mathbb{A}.
\end{equation}
Consequently, it follows from $\mu_1 \in \CP_{2r}$ that
\begin{equation*}
\II_\mathbb{A}(j) \int_{-\infty}^{-t_j} \big|X^{k,\D}_{t_j}(u)\big|^2 \m_1(\d u)  \le e^{2r} \int_{-\infty}^{-t_j} e^{-2ru} \mu_1(\d u) \|\xi\|_r^2 
\le  e^{2r} \m_1^{(2r)} \|\xi\|_r^2.
\end{equation*}
On the other hand, if $j \in \{0, \cdots, n-1\} \cap \mathbb{A}^c$, we have
$$
\begin{aligned}
       \II_{\mathbb{A}^c}(j) \int_{-\infty}^{-t_j} \big| X^{k,\D}_{t_j}(u) \big|^2 \mu_1(\d u)   =  \II_{\mathbb{A}^c}(j) \mu_1\big((-\infty, -t_j]\big)  \big| X^{k,\Delta}(t_j-k) \big|^2.
\end{aligned}
$$
Hence, for any $0 \le j \le n-1$, 
\begin{equation*}
      \int_{-\infty}^{-t_j} \big|X^{k.\D}_{t_j}(u)\big|^2 \m_1(\d u) \le \|\xi\|_r^2 e^{2r} \m_1^{(2r)}  +   \II_{\mathbb{A}^c}(j) \mu_1\big((-\infty, -t_j]\big)  \big| X^{k,\Delta}(t_j-k) \big|^2,
\end{equation*}
Substituting this inequality into \eqref{Int} gives
\begin{equation*}
\begin{aligned}
   I_1(n)
   & \le     CT \|\xi\|_r^{2p}  + C\D \sum_{j=0}^{n-1}  \mu_1\big((-\infty, -t_j]\big)  \EE \big| X^{k,\Delta}(t_j-k) \big|^{2p} \II_{\mathbb{A}^c}(j)  \\
   & +   C\D \sum_{j=0}^{n-1} \EE \int_{-t_j}^0 \big|X^{k,\D}_{t_j}(u)\big|^{2p} \m_1(\d u). \\
\end{aligned}
\end{equation*}
Moreover, if $n-1 \in \mathbb{A}$, then $\II_{\mathbb{A}^c}(j)=0$ for any $j=0, \cdots, n-1$. Therefore,
\begin{equation}\label{3.23}
\begin{aligned}
   &  \sum_{j=0}^{n-1}  \mu_1\big((-\infty, -t_j]\big)  \EE \big| X^{k,\Delta}(t_j-k) \big|^{2p} \II_{\mathbb{A}^c}(j) \\
   = & \II_{\mathbb{A}^c}(n-1) \sum_{j=0}^{n-1}  \mu_1\big((-\infty, -t_j]\big)  \EE \big| X^{k,\Delta}(t_j-k) \big|^{2p}  \II_{\mathbb{A}^c}(j) \\
   = & \II_{\mathbb{A}^c}(n-1) \sum_{j=kl+1}^{n-1}  \mu_1\big((-\infty, -t_j]\big)  \EE \big| X^{k,\Delta}(t_j-k) \big|^{2p} \\
    = & \II_{\mathbb{A}^c}(n-1) \sum_{j=1}^{n-1-kl}  \mu_1\big((-\infty, -t_{j+kl}]\big)  \EE \big| X^{k,\Delta}(t_j) \big|^{2p}\\
    \le & \II_{\mathbb{A}^c}(n-1)  \sum_{j=0}^{n-1}  \mu_1\big((-\infty, -t_{j+kl}]\big)  \EE \big| X^{k,\Delta}(t_j) \big|^{2p}. \\
\end{aligned}
\end{equation}
It follows that 
\begin{equation}\label{Int'}
\begin{aligned}
   I_1(n)
   & \le     CT \|\xi\|_r^{2p}  + C\D  \II_{\mathbb{A}^c}(n-1)   \sum_{j=0}^{n-1}  \mu_1\big((-\infty, -t_{j+kl}]\big)  \EE \big| X^{k,\Delta}(t_j) \big|^{2p}  \\
   & +   C\D \sum_{j=0}^{n-1} \EE  \int_{-t_j}^0 \big|X^{k,\D}_{t_j}(u)\big|^{2p} \m_1(\d u). \\
\end{aligned}
\end{equation}
Moreover, when $n-1\in \mathbb{A}$, we have $t_j \le t_{n-1} \le k$ for any $0 \le j \le n-1$, which implies $[-t_j, 0] \subset [-k, 0]$. Therefore, by \eqref{LI} and Jensen's inequality, if $n-1 \in\mathbb{A}$, then
\begin{equation}\label{3.18}
\begin{aligned}
     &  \sum_{j=0}^{n-1} \int_{-t_j}^0 \big|X^{k,\D}_{t_j}(u)\big|^{2p} \m_1(\d u) \\
  = & \sum_{j=1}^{n-1} \sum_{m=-j}^{-1} \int_{t_m}^{t_{m+1}} \big|X^{k,\D}_{t_j}(u)\big|^{2p} \m_1(\d u) 
  =  \sum_{m=-n+1}^{-1} \sum_{j=-m}^{n-1} \int_{t_m}^{t_{m+1}} \big|X^{k,\D}_{t_j}(u)\big|^{2p} \m_1(\d u) \\
  \le  & \sum_{m=-n+1}^{-1} \sum_{j=-m}^{n-1} \int_{t_m}^{t_{m+1}} \left(  \frac{t_{m+1}-u}{\D} \big|X^{k,\D}(t_{j+m}) \big|^{2p} +  \frac{u-t_m}{\D} \big|X^{k,\D}(t_{j+m+1})\big|^{2p} \right)  \m_1(\d u) \\
  \le  & \sum_{m=-n+1}^{-1}  \int_{t_m}^{t_{m+1}} \frac{t_{m+1}-u}{\D} \left(\sum_{j=0}^{n-1+m}\big|X^{k,\D}(t_j) \big|^{2p}\right) +  \frac{u-t_m}{\D} \left(\sum_{j=1}^{n+m}\big|X^{k,\D}(t_{j+1})\big|^{2p}\right)  \m_1(\d u) \\
 \le  & \sum_{m=-n+1}^{-1}  \int_{t_m}^{t_{m+1}} \left(\sum_{j=0}^{n-1}\big|X^{k,\D}(t_j) \big|^{2p}\right)   \m_1(\d u).
\end{aligned}
\end{equation}
which implies
\begin{equation}\label{Int2-c1}
      \II_\mathbb{A}(n-1) \sum_{j=0}^{n-1} \int_{-t_j}^0 \big|X^{k,\D}_{t_j}(u)\big|^{2p} \m_1(\d u) \le 
      \II_\mathbb{A}(n-1) \sum_{j=0}^{n-1}\big|X^{k,\D}(t_j) \big|^{2p}.
\end{equation}
On the other hand,
\begin{equation}\label{Int2-c2'}
       \II_{\mathbb{A}^c}(n-1) \sum_{j=0}^{n-1} \int_{-t_j}^0 \big| X^{k,\D}_{t_j}(u) \big|^{2p} \mu_1(\d u) \le  \II_{\mathbb{A}^c}(n-1) \bigg( J^{k,\D}_1(n) + J^{k,\D}_2(n) \bigg)
\end{equation}
where
$$
      J^{k,\D}_1(n) = \sum_{j=0}^{kl-1} \int_{-t_j}^0 \big| X^{k,\D}_{t_j}(u) \big|^{2p} \mu_1(\d u)  +  \sum_{j=kl}^{n-1} \int_{-k}^0 \big| X^{k,\D}_{t_j}(u) \big|^{2p} \mu_1(\d u)
$$
and 
$$
      J^{k,\D}_2(n) = \sum_{j=kl}^{n-1} \int_{-t_j}^{-k} \big| X^{k,\D}_{t_j}(u) \big|^{2p} \mu_1(\d u).
$$
Noting that $[-t_j, 0] \subset [-k, 0]$ for any $0 \le j \le kl-1$ and employing the techniques used in the proof of  \eqref{Int2-c1}, one derives that
\begin{equation*}
\begin{aligned}
      \II_{\mathbb{A}^c}(n-1) J^{k,\D}_1(n)
      \le  \II_{\mathbb{A}^c}(n-1) \mu_1\big((-k,0)\big) \sum_{j=0}^{n-1} \big| X^{k,\D}(t_j) \big|^{2p}.
\end{aligned}
\end{equation*}
Similar to \eqref{3.23}, one can also derive that
\begin{equation*}
     \II_{\mathbb{A}^c}(n-1)  J^{k,\D}_2(n)  \le  \II_{\mathbb{A}^c}(n-1) \sum_{j=0}^{n-1}  \mu_1\big( (-t_{j+kl}, -k)\big)  \big| X^{k,\D}(t_j) \big|^{2p}.
\end{equation*}
Hence, in \eqref{Int2-c2'},
\begin{equation}\label{Int2-c2}
\begin{aligned}
        \II_{\mathbb{A}^c}(n-1) \sum_{j=0}^{n-1} \int_{-t_j}^0 \big| X^{k,\D}_{t_j}(u) \big|^{2p} \mu_1(\d u)  \le \II_{\mathbb{A}^c}(n-1)  \sum_{j=0}^{n-1}  \mu_1\big( (-t_{j+kl}, 0)\big)  \big| X^{k,\D}(t_j) \big|^{2p}.
\end{aligned}
\end{equation}
It follows from \eqref{Int2-c1} and \eqref{Int2-c2} that we always have 
\begin{equation}\label{Int2}
\begin{aligned}
       & \sum_{j=0}^{n-1} \int_{-t_j}^0 \big| X^{k,\D}_{t_j}(u) \big|^{2p} \mu_1(\d u)  \\
      \le &  \II_\mathbb{A}(n-1) \sum_{j=0}^{n-1}\big|X^{k,\D}(t_j) \big|^{2p} + \II_{\mathbb{A}^c}(n-1)  \sum_{j=0}^{n-1}  \mu_1\left( (-t_{j+kl}, 0)\right)  \big| X^{k,\D}(t_j) \big|^{2p}.
\end{aligned}
\end{equation}
Substituting \eqref{Int2} into \eqref{Int'} yields
\begin{equation}\label{I}
   I_1(n) \le CT \|\xi\|_r^{2p} + C\D \sum_{j=0}^{n-1} \EE \big|X^{k,\D}(t_j) \big|^{2p}.
\end{equation}
By the techniques we have used in the estimation of $I_1(n)$ and $\mu_2 \in \CP_{2pr}$, $\mu_3 \in \CP_{\bar p r}$, as well as \cite[Theorem 3.1]{LMS24}, we can show that
\begin{equation}\label{I23}
\begin{aligned}
      I_2(n) & \le CT \|\xi\|_r^{2p} + pa_1\D \sum_{j=0}^{n-1} \EE \big|X^{k,\D}(t_j) \big|^{2p}, \\
      I_3(n) & \le CT \|\xi\|_r^{\bar p} + pa_2\D \sum_{j=0}^{n-1} \EE \big|X^{k,\D}(t_j) \big|^{\bar p}.
\end{aligned}
\end{equation}
Hence, in \eqref{diedai},
\begin{equation}\label{bou-final}
  \EE \left|X^{k,\D}(t_n)\right|^{2p}
\le   CT (1+\|\xi\|_r^{2p} + \|\xi\|_r^{\bar p}) + C\D \sum_{j=0}^{n-1}  \EE \big|X^{k,\D}(t_j)\big|^{2p}.
\end{equation}
Applying the discrete Gronwall lemma, we obtain
$$
\EE \left|X^{k,\D}(t_n)\right|^{2p}   \le   CT (1+\|\xi\|_r^{2p} + \|\xi\|_r^{\bar p}) e^{CT}.
$$
The desired assertion follows from the fact that $C$ is independent of $k$ and $\Delta$.
The proof is complete.
\end{proof}
$\hfill\square$

The following corollary follows   directly from Theorem \ref{th-BTN} by a similar argument as \cite[Remark 5.5]{LLMS23}.
\begin{coro}\label{coro3.3}
   Let the conditions in Theorem \ref{th-BTN} hold. Then for any $\bar q \le 2p$ and $\mu \in \CP_{\bar q r}$, 
\begin{equation*}
\sup_{k \ge 1} \sup_{0 < \Delta \le 1}\sup_{0 \le t \le T} \EE \bigg( \int_{-\infty}^0  \big|X^{k,\D}_{t}(u) \big|^{\bar q} \mu(\d u)\bigg)^{\frac{2p}{\bar q}}  \le C_T (1+\|\xi\|_r^{2p} + \|\xi\|_r^{\bar p}), \quad \forall T > 0,
\end{equation*}
where $X^{k,\D}_t$ is defined in \eqref{n-seg}.
\end{coro}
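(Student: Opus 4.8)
The plan is to deduce the claim from the moment bound \eqref{BTN} of Theorem \ref{th-BTN}, the only real subtlety being that the segment $X^{k,\D}_{t}$ combines two regimes: on the portion of its history inherited from the initial datum it may grow exponentially like $\xi$, whereas on the rest it is assembled from the computed values. Fix $k\ge1$, $\D\in(0,1]$ and $t\in[0,T]$; since $X^{k,\D}_t=X^{k,\D}_{t_j}$ on $[t_j,t_{j+1})$, it suffices to bound $\EE\big(\int_{-\infty}^0|X^{k,\D}_{t_j}(u)|^{\bar q}\mu(\d u)\big)^{2p/\bar q}$ by $C_T(1+\|\xi\|_r^{2p}+\|\xi\|_r^{\bar p})$, uniformly in $k$, $\D$ and $j$ with $t_j\le T$. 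I would partition $(-\infty,0]$ into the set $D_1$ on which $X^{k,\D}_{t_j}$ is built from the initial data $\xi$ and its complement $D_2$ on which it is built from the computed values $X^{k,\D}(t_i)$, $0\le i\le j$; inspection of \eqref{LI} shows $D_1\subset(-\infty,-t_j)$, with $D_1$ nonempty only when $t_j<k$. The two contributions are then recombined through $(a+b)^{2p/\bar q}\le 2^{2p/\bar q-1}(a^{2p/\bar q}+b^{2p/\bar q})$.

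On $D_1$, \eqref{LI} expresses $X^{k,\D}_{t_j}(u)$ as a convex combination of initial values $\xi(s)$ with $s\le0$ (or as the constant $\xi(t_{j-kl})$ for $u<-k$). Since $|\xi(s)|\le e^{-rs}\|\xi\|_r$ and $t_j\ge0$, a short computation using $e^{r\D}\le e^r$ gives the deterministic bound $|X^{k,\D}_{t_j}(u)|^{\bar q}\le e^{\bar q r}e^{-\bar q r u}\|\xi\|_r^{\bar q}$, with a constant independent of $k$. As $\mu\in\CP_{\bar q r}$, integrating yields $\int_{D_1}|X^{k,\D}_{t_j}(u)|^{\bar q}\mu(\d u)\le e^{\bar q r}\mu^{(\bar q r)}\|\xi\|_r^{\bar q}$, so that after raising to the power $2p/\bar q$ the $D_1$-contribution is deterministically of order $\|\xi\|_r^{2p}$. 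This is exactly the point where the split is indispensable: $\mu$ controls only the weight $e^{-\bar q r u}$, hence the exponentially large initial segment must be absorbed in the $\bar q$-norm \emph{before} the $2p$-th power is taken.

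On $D_2$ the segment is a convex combination of the computed values $X^{k,\D}(t_i)$, $0\le i\le j$. Because $\mu(D_2)\le1$ and $2p/\bar q\ge1$, Jensen's inequality gives $\big(\int_{D_2}|X^{k,\D}_{t_j}(u)|^{\bar q}\mu(\d u)\big)^{2p/\bar q}\le\int_{D_2}|X^{k,\D}_{t_j}(u)|^{2p}\mu(\d u)$. A further application of Jensen to the interpolation weights in \eqref{LI} bounds $|X^{k,\D}_{t_j}(u)|^{2p}$ by a convex combination of $|X^{k,\D}(t_i)|^{2p}$, $0\le i\le j$; taking expectations and invoking \eqref{BTN} (note $t_i\le t_j\le T$) then gives $\EE\int_{D_2}|X^{k,\D}_{t_j}(u)|^{2p}\mu(\d u)\le C_T(1+\|\xi\|_r^{2p}+\|\xi\|_r^{\bar p})$, uniformly in $k$, $\D$ and $j$.

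Recombining the two bounds establishes the stated estimate. In effect the hard, uniform-in-$k$ control is already contained in Theorem \ref{th-BTN}; what remains is organisational, namely to send the initial-segment term through the $\bar q$-th power—where membership $\mu\in\CP_{\bar q r}$ supplies integrability—and the computed-value term through the $2p$-th power—where \eqref{BTN} applies. The only place demanding care is keeping the two Jensen inequalities pointed in the correct directions and verifying that every index $t_i$ entering $D_2$ satisfies $t_i\le T$, so that $C_T$ is legitimate; I anticipate no further technical difficulty.
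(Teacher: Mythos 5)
Your proposal is correct and follows essentially the same route as the paper: the paper obtains Corollary \ref{coro3.3} as a direct consequence of Theorem \ref{th-BTN} via the argument of \cite[Remark 5.5]{LLMS23}, which is precisely your splitting of the history into the initial-data portion (absorbed deterministically through $\mu\in\CP_{\bar q r}$ and the fading-memory bound $|\xi(s)|\le e^{-rs}\|\xi\|_r$ before raising to the power $2p/\bar q$) and the computed-value portion (handled by Jensen's inequality and the uniform bound \eqref{BTN}). This is the same technique already used inside the paper's proof of Theorem \ref{th-BTN} (cf.\ the estimates around \eqref{Int}--\eqref{Int2}), and your treatment of the two Jensen steps and of the index ranges is sound.
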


\subsection{Strong convergence}
This subsection aims to establish the strong convergence of the TEM numerical solution $X^{k,\D}(t)$ to the exact $x(t)$ of \eqref{ISFDE}. 
Although \cite{LLMS23} gave the strong convergence of the EM numerical solution to the exact one of \eqref{ISFDE} under global Lipschitz condition,  the proof technique is not suitable for the TEM scheme due to the truncation mapping involved and the nonlinear coefficients. 
To overcome the obstacles, we make use of \eqref{con} in Lemma \ref{lemma3.1} and go to  analyze $\EE |x^k(t)-X^{k,\D}(t)|^q$. In order to deal with the  discontinuity of $X^{k,\D}(t)$ respect to $t$ we introduce an auxiliary process $Z^{k,\D}(t)$ and turn to estimate the bounds of $\EE |x^k(t)-Z^{k,\D}(t)|^q$ and  $\EE|Z^{k,\D}(t)-X^{k,\D}(t)|^q$. Then the desired result follows from the triangle inequality.  

Next we introduce the auxiliary  process
{ \begin{equation}\label{ap}
\left\{
\begin{aligned}
&Z^{k,\D}(t) = \xi(t), \quad t < 0,\\
&Z^{k,\D}(t) = X^{k,\D}(t_j)+f^{k,\D}_j(t-t_j)+g^{k,\D}_j(B(t)-B(t_j)),\ t \in [t_j, t_{j+1}),\ j \ge 0,\\
\end{aligned}\right.
\end{equation}}
where $f^{k,\D}_j$ and $g^{k,\D}_j$ are defined by \eqref{eq*1}.
Obviously,
\begin{equation}\label{XYZ}
Z^{k,\D}(t_j) = \lim_{t\to t_j^+}Z^{k,\D}(t)=X^{k,\D}(t_j), \quad \lim_{t\to t_j^-}Z^{k,\D}(t) = Y^{k,\D}(t_j),~\forall j \ge 0.
\end{equation}
One observes that $Z^{k,\D}(t_j)$ becomes a bridge connecting $ X^{k,\D}(t_j)$ and  $Y^{k,\D}(t_j)$.
For any $\Delta_1 \in (0,1]$ and $\Delta \in (0, \Delta_1]$, define
{ \begin{equation}\label{rh}
   \varrho^k_{\Delta,\Delta_1} = \inf \left\{t\ge0 : \big|Z^{k,\D}(t)\big| \ge \Gamma^{-1}(\Delta_1^{-\lambda}) \right\}.
\end{equation}}
{ Note that $Z^{k,\D}(\cdot)$ may be discontinuous on $(-\infty, T]$ but continuous on $(-\infty, \varrho^k_{\D,\Delta_1}\wedge T)$. Thus, we conclude that when $\varrho^k_{\Delta, \Delta_1} > 0$,
\begin{equation}\label{ap-con}
Z^{k,\D}(t)=
\xi(0) + \int_0^{t} f (X^{k,\D}_s, \o_s )\d s + \int_0^{t} g(X^{k,\D}_s, \o_s )\d B(s), \quad \forall t \in [0,\varrho^k_{\Delta, \Delta_1}]
\end{equation}
where $\theta_t = \theta_j$ for $t \in [t_j, t_{j+1})$ and $X^{k,\D}_t$ is defined in \eqref{n-seg}. Furthermore, one observes from \eqref{XYZ} and \eqref{rh} that, when $\varrho^k_{\Delta, \Delta_1} > 0$, we have
$$|Y^{k,\D}(t)| \le \Gamma^{-1}( \D_1^{-\lambda}), \quad \forall t \in [0, \varrho^k_{\Delta,\Delta_1}]. $$}
\begin{lemma}\label{l4.6}
Let the assumptions in Theorem \ref{th-BTN} hold. Then
$$
      \sup_{k \ge 1} \sup_{0 < \D \le 1} \sup_{0\le t\le T} \EE \big|Z^{k,\D}(t)\big|^{2p} \le C_{T,\xi}, \quad \forall T > 0.
$$
Moreover, for any $\Delta_1 \in (0, 1]$ and $\Delta \in (0,\Delta_1]$,
{ $$
      \sup_{k \ge 1} \mathbb{P}\{\varrho^k_{\Delta,\Delta_1}\le T\} \le \frac{C_{T,\xi}}{\big(\Gamma^{-1}( \Delta_1^{-\lambda})\big)^{2p}},\quad \forall T > 0.
$$}
\end{lemma}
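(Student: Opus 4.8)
The plan is to prove the two assertions in turn, deriving the probability bound from the moment bound via a stopping-time estimate. For the uniform moment bound on $Z^{k,\D}$ I would argue directly from the piecewise definition \eqref{ap}. For $t\in[t_j,t_{j+1})$, the elementary inequality $|a+b+c|^{2p}\le 3^{2p-1}(|a|^{2p}+|b|^{2p}+|c|^{2p})$ gives
$$\EE\big|Z^{k,\D}(t)\big|^{2p}\le C\,\EE\big|X^{k,\D}(t_j)\big|^{2p}+C\,\EE\big(\big|f^{k,\D}_j\big|^{2p}(t-t_j)^{2p}\big)+C\,\EE\big(\big|g^{k,\D}_j\big|^{2p}\big|B(t)-B(t_j)\big|^{2p}\big).$$
The first term is controlled by Theorem \ref{th-BTN}. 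Into the other two I would insert the growth bounds \eqref{linear}, use $(t-t_j)^{2p}\le\D^{2p}$, and condition on $\CF_{t_j}$ to replace $\EE|B(t)-B(t_j)|^{2p}$ by $C\D^{p}$. The decisive point is that the negative powers $\D^{-2p\lambda}$, $\D^{-p\lambda}$ from \eqref{linear} are dominated by $\D^{2p}$, $\D^{p}$: since $\lambda\le 1/2$ we have $\D^{2p(1-\lambda)}\le 1$ and $\D^{p(1-\lambda)}\le 1$ for $\D\le 1$. The surviving history integrals $\big(\int_{-\infty}^{0}|X^{k,\D}_{t_j}(u)|^{2}\mu_1(\d u)\big)^{p}$ are then bounded, uniformly in $k$ and $\D$, by Corollary \ref{coro3.3} with $\bar q=2$. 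Taking the supremum over $t\in[0,T]$ yields the first claim.

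For the probability estimate I would argue by Chebyshev's inequality. Writing $\varrho=\varrho^k_{\D,\D_1}$, right-continuity of $Z^{k,\D}$ and the first-hitting-time structure \eqref{rh} force $|Z^{k,\D}(\varrho)|\ge\Gamma^{-1}(L\D_1^{-\lambda})$ on $\{\varrho\le T\}$, whence
$$\big(\Gamma^{-1}(L\D_1^{-\lambda})\big)^{2p}\,\PP\{\varrho\le T\}\le\EE\big(\big|Z^{k,\D}(\varrho\we T)\big|^{2p}\II_{\{\varrho\le T\}}\big)\le\EE\big|Z^{k,\D}(\varrho\we T)\big|^{2p}.$$
Everything then reduces to the stopping-time moment bound $\EE|Z^{k,\D}(\varrho\we T)|^{2p}\le C_{T,\xi}$, uniformly in $k,\D,\D_1$. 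I would obtain this from the continuous representation \eqref{ap-con}: applying It\^o's formula to $|Z^{k,\D}(t\we\varrho)|^{2p}$, taking expectations so that the martingale part (a genuine martingale, since $|Z^{k,\D}|$ and $|g^{k,\D}_j|$ are bounded on $[0,\varrho]$ for fixed $\D$) vanishes, and setting $t=T$, gives $\EE|Z^{k,\D}(\varrho\we T)|^{2p}$ as $|\xi(0)|^{2p}$ plus the expected integral over $[0,\varrho\we T]$ of $p|Z^{k,\D}(s)|^{2p-2}\big(2\langle Z^{k,\D}(s),f^{k,\D}_j\rangle+(2p-1)|g^{k,\D}_j|^{2}\big)$, with $j$ the index such that $s\in[t_j,t_{j+1})$.

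The main obstacle is that Assumption \ref{mon} controls this quadratic form at the grid value $X^{k,\D}(t_j)=X^{k,\D}_s(0)$, not at $Z^{k,\D}(s)$. I would therefore write $Z^{k,\D}(s)=X^{k,\D}(t_j)+R_j(s)$ with $R_j(s)=f^{k,\D}_j(s-t_j)+g^{k,\D}_j(B(s)-B(t_j))$ and expand, so that the leading term is exactly $p|X^{k,\D}(t_j)|^{2p-2}\big(2\langle X^{k,\D}(t_j),f^{k,\D}_j\rangle+(2p-1)|g^{k,\D}_j|^{2}\big)$, to which Assumption \ref{mon} applies; the negative contribution $-a_2|X^{k,\D}(t_j)|^{\bar p}$ it produces is used to absorb the positive $\bar p$-history term $a_2\int|X^{k,\D}_s(u)|^{\bar p}\mu_3(\d u)$, via the measure/delay rearrangement already carried out for $I_3$ in the proof of Theorem \ref{th-BTN}. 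The discrepancy terms involving $R_j(s)$ either have zero conditional mean (those linear in $B(s)-B(t_j)$) or carry a positive net power of $\D$ — e.g. $|f^{k,\D}_j|^{2}(s-t_j)$ is of order $\D^{1-2\lambda}\le 1$ since $\lambda\le 1/2$ — so, combined with the uniform grid moments of Theorem \ref{th-BTN} and the history bounds of Corollary \ref{coro3.3}, every term is bounded by $C_{T,\xi}$ independently of $k,\D,\D_1$. Reproducing the monotonicity cancellation of Theorem \ref{th-BTN} in this continuous, stopped form while keeping the $R_j$-discrepancy under control is the step I expect to be most delicate; once it is in place, the Chebyshev reduction closes the proof.
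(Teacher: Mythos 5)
Your skeleton is the right one — the direct argument for the first claim (one-step representation plus Theorem \ref{th-BTN}, Corollary \ref{coro3.3} and \eqref{linear}) is sound, and the Chebyshev reduction of the probability bound to a stopped moment bound $\EE|Z^{k,\D}(T\we\varrho)|^{2p}\le C_{T,\xi}$ is exactly the paper's. The gap is in how you propose to prove that stopped bound. After applying It\^o's formula on $[0,t\we\varrho]$, every drift term is integrated against $\II_{\{s<\varrho\}}\,\d s$, and $\II_{\{s<\varrho\}}$ is \emph{not} $\CF_{t_j}$-measurable (whether $Z$ crosses the barrier in $(t_j,s]$ depends on the Brownian path after $t_j$). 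So your claim that the discrepancy terms linear in $B(s)-B(t_j)$ ``have zero conditional mean'' fails precisely where it is needed. Writing $\II_{\{s<\varrho\}}=\II_{\{\varrho>t_j\}}-\II_{\{t_j<\varrho\le s\}}$, the first piece is $\CF_{t_j}$-measurable and does kill those terms, but the correlation remainder on $\{t_j<\varrho\le s\}$ survives. For the worst such term, $2p|X^{k,\D}(t_j)|^{2p-2}\langle g^{k,\D}_j(B(s)-B(t_j)),f^{k,\D}_j\rangle$, the crude bound via \eqref{linear} is of size $\D^{(1-3\lbd)/2}\,|X^{k,\D}(t_j)|^{2p-2}W_j$ per unit time, with $W_j=1+\int_{-\8}^0|X^{k,\D}_{t_j}(u)|^2\mu_1(\d u)$, which diverges as $\D\to0$ whenever $\lbd\in(1/3,1/2]$ — and the scheme (and Section \ref{S4}, where $\lbd=v/(2(p-1))$ can equal $1/2$) allows exactly this range. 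Nor can you rescue it by H\"older against the smallness of $\PP\{t_j<\varrho\le s\}$: the factor $|X^{k,\D}(t_j)|^{2p-2}W_j$ already saturates the available moment budget ($2p$th moments of grid values and $p$th moments of $W_j$ are all that Theorem \ref{th-BTN} and Corollary \ref{coro3.3} give uniformly in $k,\D$), so any H\"older exponent strictly greater than $1$ on that factor calls on moments you do not have, and the truncation level $\G^{-1}(L\D^{-\lbd})$ can grow faster than any power of $\D^{-1}$, so deterministic truncation bounds do not help either.

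This is exactly the difficulty the paper's proof is built to avoid: it never inserts a survival indicator into a continuous time integral. Instead it stops at \emph{grid} times, setting $\be=\lfloor\varrho/\D\rfloor$ and deriving a recursion for $Y^{k,\D}(t_{j\we\be})$ in which the Brownian motion enters only through $\D B_j\II_{[[0,\be]]}(j+1)=B(t_{(j+1)\we\be})-B(t_{j\we\be})$, a genuine stopped-martingale increment whose odd conditional moments vanish exactly by Doob's optional stopping theorem; the moment bound for $Z^{k,\D}(t\we\varrho)$ is then obtained afterwards by a single-step estimate from $X^{k,\D}(t_{j\we\be})$. Incidentally, the step you flagged as the most delicate — reproducing the $a_2$-cancellation between $-|X^{k,\D}(t_j)|^{\bar p}$ and the $\mu_3$-history term under stopping — actually does go through in your continuous formulation, because the Lebesgue weights $\ell_i=\mathrm{leb}([t_i,t_{i+1})\cap[0,t\we\varrho))$ satisfy $\ell_j\le\ell_i$ for $j\ge i$, so the rearranged history term is dominated by the negative term. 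The fatal point is the one you treated as routine; repairing it essentially forces you back to the paper's discrete stopped recursion (or restricts the scheme to $\lbd<1/3$).
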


\begin{proof}
We can obtain the boundedness using techniques similar to those in \cite[Lemma 3.5]{LMS24}. To avoid duplication, we omit the proof.
Fix $\Delta_1 \in (0,1]$ arbitrarily. For any $\D\in(0,\D_1]$  and $k \ge 1$, let $\beta^k_{\D,\D_1} = \lfloor \varrho^k_{\D,\D_1}/ \Delta \rfloor$. For simplicity, we write $\varrho = \varrho^k_{\D,\D_1}$ and $\beta = \beta^k_{\D,\D_1}$. For any $t \in [t_j, t_{j+1})$ with $j \in \mathbb{N}$, one has $\lfloor (t \wedge \varrho ) / \Delta \rfloor= j \wedge \beta$.
Clearly, $\varrho$ and $\beta$ are $\mathcal{F}_t$ and $\mathcal{F}_{t_j}$ stopping times.
Define
$$
Y^{k,\D}_{t_j}(u)=
\left\{\begin{aligned}
& \frac{t_{m+1}-u}{\D} Y^{k,\D}(t_{j+m}) + \frac{u-t_m}{\D} Y^{k,\D}(t_{j+m+1}), \\
&~~~~~~~~~~~~~~~~t_m\le u \le t_{m+1},\ -k l \le m \le -1,\\
& Y^{k,\D}(t_{j-k l}),~-\infty<u < -k,
\end{aligned}\right.
$$
{ where $Y^{k,\D}(t_j)$ is defined in \eqref{TEM}.}
For $\omega\in\{\beta \ge j+1\}$, we have
{ $
\big|Y^{k,\D}(t_j)\big| < \G^{-1}(\D_1^{-\lbd}) \le \Gamma^{-1}( \Delta^{-\lambda})
$}
and $X^{k,\D}(t_m) = Y^{k,\D}(t_m)$ for any $m \leq j$, which implies that
$Y^{k,\D}_{t_j}(\cdot) = X^{k,\D}_{t_j}(\cdot)$.
Then it follows from \eqref{TEM} that
\begin{align}\nn
Y^{k,\D}(t_{(j+1)\wedge\beta}) 
= Y^{k,\D}(t_{j\wedge\beta}) + \big( F^{k,\D}_j\D + G^{k,\D}_j\D B_j \big) \mathbf{1}_{[[0,\beta]]}(j+1),
\end{align}
where 
$$
F^{k,\D}_j = f(Y^{k,\D}_{t_j}, \theta_j), \ G^{k,\D}_j = g(Y^{k,\D}_{t_j}, \theta_j).
$$
On the other hand, for $\omega\in\{\beta< j+1\}$, we have $\be\le j$, and hence
\begin{align}\nn
Y^{k,\D}(t_{(j+1)\we\be}) = Y^{k,\D}(t_{j \wedge \beta}) = Y^{k,\D}(t_{j\we \be}) + \big( F^{k,\D}_j\D + G^{k,\D}_j\D B_j \big) \II_{[[0, \be]]}(j+1).
\end{align}
In other words, we always have
\begin{align*}
Y^{k,\D}(t_{(j+1)\we\be}) =Y^{k,\D}(t_{j\we\be}) + \big( F^{k,\D}_j\D + G^{k,\D}_j\D B_j \big) \II_{[[0,\be]]}(j+1).
\end{align*}
Similar to the proof of Theorem \ref{th-BTN}, we obtain that
$$
\big| Y^{k,\D}(t_{(j+1)\we\be}) \big|^2  \le \big| Y^{k,\D}(t_{j\we\be}) \big|^2 + \zeta^{k,\D}_j \II_{[[0,\be]]}(j+1),
$$
here
$$
\begin{aligned}
\zeta^{k,\D}_j &=   \big| F^{k,\D}_j \big|^2 \D^2 + \big| G^{k,\D}_j \D B_j \big|^2 + 2 \big\langle Y^{k,\D}(t_j), F^{k,\D}_j \big\rangle \D   \\ 
& + 2 \big\langle Y^{k,\D}(t_j), G^{k,\D}_j \D B_j \big\rangle + 2 \big\langle F^{k,\D}_j, G^{k,\D}_j \D B_j \big\rangle \D.
\end{aligned}
$$
In a similar way to that of \eqref{b}, we get
\begin{equation*}
\begin{aligned}
& \EE \left( \big|Y^{k,\D}(t_{(j+1)\we\be})\big|^{2p} \big|\CF_{t_{j\we\be}} \right) \\
\le  & \big|Y^{k,\D}(t_{j\we\be})\big|^{2p}  +  \sum_{m=1}^p C_p^m  \EE \left( \left|Y^{k,\D}(t_j)\right|^{2(p-m)}  \left(\zeta^{k,\D}_j\right)^m \II_{[[0,\be]]}(j+1) \Big| \CF_{t_{j\wedge \beta}} \right).
\end{aligned}
\end{equation*}
Since
$$
\D B_j \II_{[[0,\be]]}(j+1) = B(t_{(j+1)\we\be})-B(t_{j\we\be})
$$
and $B(t)$ is a continuous martingale,
by the Doob martingale stopping time theorem (see e.g. \cite[Theorem 1.5]{MY06}) we know that
$\EE\big((\D B_j) \II_{[[0,\be]]}(j+1) |\CF_{t_{j\we\be}}\big)=0$ and
$$
\EE\big(|\D B_j|^2 \II_{[[0,\be]]}(j+1) |\CF_{t_{j\we\be}}\big)\le d \D \II_{[[0,\be]]}(j+1),
$$
where we used the fact that $\mathbf{1}_{[[0,\beta]]}(j+1)$ is $\mathcal{F}_{t_{j\wedge\beta}}$ measurable. Similarly, by applying the property of the conditional expectation, one can also derive 
$$
\begin{aligned}
& \EE\left( |A\D B_j|^{2m} \mathbf{1}_{[[0,\beta]]}(j+1)|\CF_{t_{j\wedge\beta}} \right) \le (2m-1)!!n^m|A|^{2m}\Delta^m \mathbf{1}_{[[0,\beta]]}(j+1),\\
& \mathbb{E}\left( \left(\left\langle z, A\D B_j\right\rangle \right)^{2m}\mathbf{1}_{[[0,\beta]]}(j+1)|\CF_{t_{j\wedge\beta}} \right)=(2m-1)!!|\langle z, A\rangle|^{2m}  \Delta^m \mathbf{1}_{[[0,\beta]]}(j+1),\\
& \EE\left( \left(\left\langle z, A\D B_j\right\rangle \right)^{2m-1}\mathbf{1}_{[[0,\beta]]}(j+1)|\CF_{t_{j\wedge\beta}} \right)=0, \ \forall m\ge 2,\ z \in \mathbb{R}^n \hbox{ and } A \in \mathbb{R}^{n \times d}. \\
\end{aligned}
$$
{ By combining these with the techniques used in the proof of \eqref{bou-final}, and using the fact that $\II_{{[[0, \beta]]}}(j-m+1) \le \II_{{[[0, \beta]]}}(j+1)$ for any $m \in \{-kl, -kl+1, \cdots, -1 \}$, one obtains
	$$
	\EE  \big|Y^{k,\D}(t_{n\we\be})\big|^{2p}
	 \le  CT (1+\|\xi\|_r^{2p} + \|\xi\|_r^{\bar p}) + C\D \sum_{j=0}^{n-1} \EE  \big|Y^{k,\D}(t_{j\we\be})\big|^{2p}.
	$$}
	An application of the discrete Gronwall inequality yields that
	\begin{equation}\label{3.29}
	\sup_{0 \le n \le \lfloor T/\D \rfloor} \EE \big|Y^{k,\D}(t_{n\we\be})\big|^{2p}  
	\le CT (1+\|\xi\|_r^{2p} + \|\xi\|_r^{\bar p}) e^{CT}.
	\end{equation}
	Furthermore, for any $t \in [0, T]$, there exists $j \in \mathbb{N}$ such that $t \in [t_j, t_{j+1})$. Making use of \eqref{ap}, the Burkholder-Davis-Gundy inequality, and \eqref{linear} yields
	\begin{equation*}
	\begin{aligned}
	\EE \big| Z^{k,\D}(t \wedge \varrho) \big|^{2p} & = C \EE \big| X^{k,\D}(t_{j \wedge \beta}) \big|^{2p}  + C  \EE \big| f^{k,\D}_{j \wedge \beta} \big|^{2p} \Delta^{2p} + C \EE \big| g^{k,\D}_{j \wedge \beta} \big|^{2p} \Delta^p \\
	& \le C \bigg( 1 +  \EE  \big|X^{k,\D}(t_{j\we\be})\big|^{2p} + \EE \bigg(\int_{-\infty}^0 \big| X^{k,\D}_{t_{j\wedge \beta}}(u) \big|^2 \mu_1(\d u) \bigg)^p  \bigg) \\
	& \le C \bigg( 1 +  \EE  \big|Y^{k,\D}(t_{j\we\be})\big|^{2p} + \EE \bigg(\int_{-\infty}^0 \big| Y^{k,\D}_{t_{j\wedge \beta}}(u) \big|^2 \mu_1(\d u) \bigg)^p  \bigg).
	\end{aligned}
	\end{equation*}
	By \eqref{3.29} and the approach used in the proof of \cite[Remark 5.5]{LLMS23}, we derive that 
	$$
	\EE \bigg(\int_{-\infty}^0 \big| Y^{k,\D}_{t_{j\wedge \beta}}(u) \big|^2 \mu_1(\d u) \bigg)^p \le C_{T,\xi}.
	$$
	Hence, 
	$$
	\EE \big| Z^{k,\D}(t \wedge \varrho) \big|^{2p} \le C_{T,\xi}.
	$$
	The desired assertion then follows from
	{ $$
		\big( \Gamma^{-1}(\Delta_1^{-1}) \big)^{2p} \mathbb{P}\{\varrho \le T\} \le \mathbb{E} \big(|Z^{k,\D}(T\wedge\varrho)|^{2p}\big)
		\le C_{T,\xi}
		$$}
	and the fact that $C_{T,\xi}$ is independent of $k$ and $\Delta$. The proof is complete.
\end{proof}
$\hfill\square$

{  For each $u \ge 0$, define
\begin{equation}\label{h}
  h(u) = \sup_{v_1, v_2 \in \RR_-: |v_1-v_2| \le u} |\xi(v_1)-\xi(v_2)|.
\end{equation}
As cited by \cite{LMS24, M03, SWMW24, WM08} the uniform continuity of the initial data is important for   the analysis of the strong convergence of numerical schemes for SFDEs. One observes that if $\xi$ is uniformly continuous on $\mathbb{R}_-$, then $h(u) < +\infty$ for any $u \ge 0$, and
\begin{equation}\label{lim-h}
\lim_{u \to 0} h(u) = 0.
\end{equation}}

\begin{lemma}\label{lemma3.4}
Let Assumptions \ref{Lip} and \ref{mon} hold with $p > 1$ and $\xi$ be uniformly continuous on $\RR_-$. Then for any $q\in (0, 2p)$, $\D_1 \in (0, 1]$, $\D \in (0,\D_1]$ and $T > 0$, 
{ $$
 \sup_{k \ge T} \EE \Big( \sup_{0 \le t \le T} |x^k(t)-Z^{k,\D}(t)|^q \II_{\{\s^k_{\D, \D_1} > T \} } \Big)  \le C_{T,\xi,\D_1} \left(  h^q(\D)  + \D^{1\we\frac{q}{4}}  + \frac{1}{\Gamma^{-1}(\Delta^{-\lambda})} \right),
$$
where
$$\sigma^k_{\Delta, \Delta_1} = \tau^k_{\Gamma^{-1}( \Delta_1^{-\lambda})} \wedge  \varrho^k_{\D,\Delta_1},$$
and the stopping times $\tau^k_{\Gamma^{-1}( \Delta_1^{-\lambda})}$, $\varrho^k_{\D,\Delta_1}$ are defined in \eqref{rkh}, \eqref{rh} respectively.}
\end{lemma}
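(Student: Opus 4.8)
The plan is to estimate $e^{k,\D}(t):=x^k(t)-Z^{k,\D}(t)$ up to the stopping time $\sigma^k_{\D,\D_1}$, at which both the exact solution of \eqref{TSFDE} (through $\tau^k_{\Gamma^{-1}(L\D_1^{-\lambda})}$) and the auxiliary process $Z^{k,\D}$ (through $\varrho^k_{\D,\D_1}$) remain inside the ball of radius $R_1:=\Gamma^{-1}(L\D_1^{-\lambda})$. On the stopped interval the segments $x^k_s$ and $X^{k,\D}_s$ all have $\CC_r$-norm bounded by $R_1$ (using $\|\xi\|_r\le R_1$ and $|X^{k,\D}(t_m)|=|\L^\D(Y^{k,\D}(t_m))|\le|Y^{k,\D}(t_m)|\le R_1$ before $\varrho^k_{\D,\D_1}$), so the local Lipschitz constant $K_{R_1}$ of Assumption \ref{Lip} is available; moreover $Z^{k,\D}$ is continuous up to $\sigma^k_{\D,\D_1}$ because the truncation $\L^\D$ is inactive there, whence \eqref{ap-con} holds. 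I would first reduce to $q\ge 2$: for $q\in(0,2)$, Jensen's inequality gives $\EE\sup|e^{k,\D}|^q\le(\EE\sup|e^{k,\D}|^2)^{q/2}$, and since the target at $q=2$ is $h^2(\D)+\D^{1/2}$, subadditivity of $x\mapsto x^{q/2}$ reproduces $h^q(\D)+\D^{q/4}=h^q(\D)+\D^{1\wedge q/4}$.

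For $q\ge 2$, subtracting \eqref{ap-con} from the integral form of \eqref{TSFDE} and using $\pi_k(X^{k,\D}_s)=X^{k,\D}_s$ (so that $f(X^{k,\D}_s,\cdot)=f_k(X^{k,\D}_s,\cdot)$), I would represent $e^{k,\D}(t\we\sigma)$ as a drift integral plus a stochastic integral with integrands the coefficient differences $f_k(x^k_s,\o(s))-f_k(X^{k,\D}_s,\o_s)$ and the analogue for $g_k$. Each difference splits into a spatial part $f_k(x^k_s,\o(s))-f_k(X^{k,\D}_s,\o(s))$, handled by $K_{R_1}$ and Assumption \ref{Lip}, plus a Markov part $f_k(X^{k,\D}_s,\o(s))-f_k(X^{k,\D}_s,\o_s)$. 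After the Burkholder--Davis--Gundy and H\"older inequalities, the spatial part (drift and diffusion alike) is dominated by
\[
C_{\D_1}\int_0^t \EE\Big(\int_{-\infty}^0 |x^k_{s\we\sigma}(u)-X^{k,\D}_{s\we\sigma}(u)|^2\mu_1(\d u)\Big)^{q/2}\d s .
\]

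The crux is controlling this $\CC_r$-segment difference, which I would split at $s+u=0$. On $\{s+u<0\}$ both segments reduce to a piecewise-linear interpolation of the initial datum $\xi$, so their difference is bounded by the modulus of continuity $h(\D)$, producing the $h^q(\D)$ term; here the hypothesis $k\ge T$ is essential, since it guarantees that for every $s\le T$ the truncated deep memory ($u<-k$) of both segments lies in the region $s-k\le 0$ and hence enters only through $\xi$ rather than through accumulated dynamic error. On $\{s+u\ge 0\}$ I would write $X^{k,\D}_s(u)=Z^{k,\D}(s+u)+(\text{interpolation remainder})$, bounding the difference by $\sup_{0\le r\le s}|e^{k,\D}(r\we\sigma)|$ plus that remainder; a direct computation shows the remainder equals $g^{k,\D}_\ell$ times a Brownian-bridge increment. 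Estimating $g^{k,\D}_\ell$ by the a priori truncation bound \eqref{linear}, $|g^{k,\D}_\ell|^2\le L\D^{-\lambda}(1+\int|X^{k,\D}_{t_\ell}(u)|^2\mu_1(\d u))$ with $\lambda\le 1/2$, and using the uniform moment estimate of Corollary \ref{coro3.3} for the $q/2$-th moment of the weighted integral, its $q$th moment is $O(\D^{q(1-\lambda)/2})=O(\D^{q/4})$; this is precisely the mechanism generating the $\D^{q/4}$ rate.

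Finally, on the stopped interval the Markov integrands are bounded by $C_{\D_1}\mathbf{1}_{\{\o(s)\ne\o_s\}}$, with $\PP(\o(s)\ne\o_s)\le C(s-t_j)\le C\D$ on $[t_j,t_{j+1})$; estimating the resulting time integral crudely via $\big(\int_0^T \mathbf{1}_{\{\o(s)\ne\o_s\}}\d s\big)^{q/2}\le T^{q/2-1}\int_0^T\mathbf{1}_{\{\o(s)\ne\o_s\}}\d s$ yields an $O(\D)$ contribution, the source of the competing $\D^{1}$ and hence of the cap in $\D^{1\wedge q/4}$. Collecting the spatial, interpolation and Markov estimates produces a Gronwall-type inequality for $\phi(t):=\EE\sup_{0\le r\le t}|e^{k,\D}(r\we\sigma)|^q$, namely $\phi(t)\le C_{T,\xi,\D_1}(h^q(\D)+\D^{1\wedge q/4})+C_{\D_1}\int_0^t\phi(s)\d s$, and the discrete/continuous Gronwall lemma closes the argument; the uniformity $\sup_{k\ge T}$ follows because the moment bounds of Corollary \ref{coro3.3} and Lemma \ref{lemma3.1} are uniform in $k$, so every constant is independent of $k$. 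The main obstacle I anticipate is the $\CC_r$-segment comparison: separating cleanly the initial-data modulus $h(\D)$, the dynamic error $e^{k,\D}$, and the Brownian-bridge interpolation remainder over the entire infinite, $\mu_1$-weighted delay while keeping all constants independent of $k$ --- which is exactly where $k\ge T$ and the fading-memory condition $\mu_1\in\CP_{2r}$ are used.
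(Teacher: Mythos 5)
Your proposal is correct and follows essentially the same route as the paper's proof: reduction to $q\ge 2$, the representation \eqref{ap-con} plus Burkholder--Davis--Gundy, the split of each coefficient difference into a spatial part (handled by Assumption \ref{Lip} on the ball of radius $\Gamma^{-1}(L\Delta_1^{-\lambda})$, with the segment difference separated at $s+u=0$ into the initial-data term $h^q(\Delta)$ and the dynamic error plus interpolation remainder of order $\Delta^{q/4}$) and a Markov-switching part of order $\Delta$, followed by Gronwall with constants uniform in $k\ge T$. The only cosmetic differences are that you make explicit the Brownian-bridge computation that the paper delegates to \cite[Lemma 3.3]{M03}, and you bound the Markov-part integrand deterministically where the paper invokes Corollary \ref{coro3.3}.
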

\begin{proof}
By Young's inequality, it is sufficient to consider the case $q \in [2, 2p)$. Fix $T > 0$ and $\D_1 \in (0,1]$ arbitrarily. For any $k \ge T$ and $\D \in (0,\D_1]$, let $\sigma = \sigma^k_{\Delta, \Delta_1}$ for simplicity. It follows from \eqref{ap-con} and the Burkholder-Davis-Gundy inequality that, for any $t \in [0,T]$,
\begin{equation}\label{3.25}
\begin{aligned}
   &  \EE\Big( \sup_{0 \le s \le t} |x^k(s)-Z^{k,\D}(s)|^q \II_{\{\s > t \}} \Big)  \\
   \le & C_T \EE \int_0^{t} \left[ \left( |f_k(x^k_s, \theta(s)) - f(X^{k,\D}_s, \theta_s)|^q + |g_k(x^k_s, \theta(s)) - g(X^{k,\D}_s, \theta_s)|^q \right) \II_{\{\s > t \}} \right] \d s \\
\end{aligned}
\end{equation}
Obviously, using the H$\rm{\ddot o}$lder inequality, an argument similar to that in \cite[(5.15)]{LLMS23}, together with Assumption \ref{Lip} and Corollary \ref{coro3.3}, we obtain
\begin{equation*}
\begin{aligned}
   &  \EE \int_0^{t} \left[ \left( |f_k(x^k_s, \theta(s)) - f(X^{k,\D}_s, \theta_s)|^q \right) \II_{\{\s > t \}} \right] \d s  \\
   \le & 2^{q-1} \EE \int_0^{t}  \left[ \left( |f_k(x^k_s, \theta(s)) - f(X^{k,\D}_s, \theta(s))|^q + |f(X^{k,\D}_s, \theta(s)) - f(X^{k,\D}_s, \theta_s)|^q \right) \II_{\{\s > t \}} \right] \d s \\
   \le & 2^{q-1} \EE \int_0^{t}  \left( |f_k(x^k_s, \theta(s)) - f(X^{k,\D}_s, \theta(s))|^q \II_{\{\s > t \}} \right) \d s + C_{T,\xi,\Delta_1} \Delta. \\
\end{aligned}
\end{equation*}
{ Noting that $\|x^k_s\|_r \vee \|X^{k,\D}_s\|_r \le  \|\xi\|_r  \vee \Gamma^{-1}(\D_1^{-\lambda})$ for any $s \in [0, t] \subset [0, \s)$} and using Assumption \ref{Lip}, we obtain
{ \begin{equation*}
\begin{aligned}
& \EE \int_0^{t} \left( |f_k(x^k_s, \theta(s)) - f(X^{k,\D}_s, \theta(s))|^q \II_{\{\s > t\}} \right) \d s  \\
   \le & C_{\xi,\D_1}  \EE \int_0^{t} \int_{-\infty}^0 \left( |\pi_k(x^k_s)(u) - X^{k,\D}_s(u)|^q \II_{\{\s > t \}} \right) \mu_1(\d u) \d s \\
   = & C_{\xi,\Delta_1} \EE \left[ \left( J_1(t) + J_2(t) \right) \II_{\{\s > t \}} \right],
\end{aligned}
\end{equation*}
where
$$
R_1(t) := \mu_1\left( (-\infty, -k]\right) \int_0^t   |x^k_s(-k)-X^{k,\Delta}_s(-k)|^q   \d s,
$$
$$
R_2(t) := \int_0^t \int_{-k}^0  |x^k_s(u)-X^{k,\Delta}_s(u)|^q \mu_1(\d u) \d s.
$$
}
{ 
For any $t < \s$, let $n = \lfloor t/\Delta \rfloor$ and $t_{n+1} = t$. It follows from $k \ge T$ and the H$\rm{\ddot o}$lder inequality that
\begin{align}\nn
R_1(t) = & \mu_1\left( (-\infty, -k]\right) \sum_{j=0}^{n}\int_{t_j}^{t_{j+1}}  |x^k_s(-k) - X^{k,\Delta}_{t_j}(-k)|^q \d s \\ \nn
 = & \mu_1\left( (-\infty, -k]\right) \sum_{j=0}^{n}\int_{t_j}^{t_{j+1}}  |\xi(s-k) - \Lambda^{\Delta}\left( \xi(t_j-k) \right)|^q \d s \\ \nn
 \le & 2^{q-1} \mu_1\left( (-\infty, -k]\right) \sum_{j=0}^{n}\int_{t_j}^{t_{j+1}}  |\xi(s-k) -  \xi(t_j-k) |^q \d s \\ \nn
 + & 2^{q-1} \Delta \mu_1\left( (-\infty, -k]\right) \sum_{j=0}^{n}   |\xi(t_j-k) - \Lambda^{\Delta}\left( \xi(t_j-k) \right)|^q .
\end{align}
Noting that for any $u \le 0$ and positive integers $\tilde\gamma, \gamma$,
$$
  \left|\xi(u) - \Lambda^{\Delta}(\xi(u)) \right|^{\tilde\gamma} = \left|\xi(u) - \Lambda^{\Delta}(\xi(u)) \right|^{\tilde\gamma} \II_{\{|\xi(u)|>\Gamma^{-1}(\Delta^{-\lambda}) \}} \le \frac{ C |\xi(u)|^{\tilde\gamma+\gamma}}{\left( \Gamma^{-1}(\Delta^{-\lambda}) \right)^\gamma}.
$$
It follows from the uniform continuity of $\xi$ that
\begin{equation}\label{tnt}
 \left|\xi(u) - \Lambda^{\Delta}(\xi(u)) \right|^{\tilde\gamma} \le \frac{C\left(1 + |u|^{\tilde\gamma+\gamma}\right)}{\left(\Gamma^{-1}(\Delta^{-\lambda})\right)^{\gamma}}.
\end{equation}
Since $\mu_1 \in \CP_{2r}$, we have
$$
\begin{aligned}
R_1(t) &  \le C_T h^q(\Delta) + C \Delta \mu_1\left( (-\infty, -k]\right) \sum_{j=0}^{n}   \frac{\left(1 + |t_j-k|^{q+1}\right)}{ \Gamma^{-1}(\Delta^{-\lambda}) } \\
& \le C_T h^q(\Delta) + C_{T,\xi} \mu_1\left( (-\infty, -k]\right)   \frac{\left(1 + k^{q+1}\right)}{ \Gamma^{-1}(\Delta^{-\lambda}) } \\
& \le C_{T} h^q(\Delta) + C_{T,\xi} \mu_1\left( (-\infty, -k]\right) e^{2rk}  \frac{ \left( \sup_{u \le 0} \left(e^{2ru} (1+|u|^{q+1}) \right) \right)   }{ \Gamma^{-1}(\Delta^{-\lambda}) } \\
& \le C_{T} h^q(\Delta) + \frac{C_{T,\xi}}{\Gamma^{-1}(\Delta^{-\lambda})},
\end{aligned}
$$
where $h(\cdot)$ is defined in \eqref{h}.
Moreover, for any $t < \s$, using arguments similar to that in \cite[Lemma 3.3]{M03} and the estimation of $J_1(t)$, we derive that
$$
\begin{aligned}
R_2(t)
\le  C_{T,\xi} \left( h^q(\Delta) + \Delta^{\frac{q}{4}} + \frac{1}{\Gamma^{-1}(\Delta^{-\lambda})} \right)  + \int_0^{t} \int_{-s}^0 |x^k(s+u) - X^{k,\D}_s(u)|^q \mu_1(\d u) \d s.
\end{aligned}
$$
}
Therefore, 
\begin{equation}\label{3.26}
\begin{aligned}
   & \EE \int_0^{t} \left( |f_k(x^k_s, \theta(s)) - f(X^{k,\D}_s, \theta(s))|^q \II_{\{\s > t\}} \right) \d s \\
   \le & C_{T,\xi,\D_1} \left( h^q(\D) +  \D^{\frac{q}{4}}   + \frac{1}{\Gamma^{-1}(\Delta^{-\lambda})}  \right)   +   \EE \int_0^{t} \int_{-s}^0 \left( |x^k(s+u) - X^{k,\D}_s(u)|^q \mu_1(\d u) \II_{\{\s > t\}} \right) \d s.
\end{aligned}
\end{equation}
By the triangle inequality and the standard  argument (see e.g. \cite[(2.14)]{LLMS23}),
\begin{align}\nn
   &  \EE \int_0^{t} \int_{-s}^0 \left( |x^k(s+u) - X^{k,\D}_s(u)|^q \II_{\{\s > t \}}  \right) \mu_1(\d u) \d s \\ \nn
   \le & 2^{q-1} \EE \int_0^{t} \int_{-s}^0  \left( |x^k(s+u) - Z^{k,\D}(s+u)|^q \II_{\{\s > t \}}  \right)  \mu_1(\d u) \d s \\ \nn
    & + 2^{q-1} \EE \int_0^{t} \int_{-s}^0 \left( |Z^{k,\D}(s+u) - X^{k,\D}_s(u)|^q \II_{\{\s > t \}}\right)  \mu_1(\d u) \d s \\ \nn
   \le & 2^{q-1}\EE \int_0^{t}  \left( |x^k(s) - Z^{k,\D}(s)|^q \II_{\{\s > t \}} \right) \d s \\ \nn
   & + 2^{q-1} \EE \int_0^{t} \int_{-s}^0 \left( |Z^{k,\D}(s+u) - X^{k,\D}_s(u)|^q \II_{\{\s > t \}} \right)  \mu_1(\d u) \d s. 
\end{align}
An argument similar to that in \cite[Lemma 3.3]{M03}, combined with \eqref{linear}, leads to
\begin{equation*}
\begin{aligned}
   \EE \int_0^{t} \int_{-s}^0 \left(  |Z^{k,\D}(s+u) - X^{k,\D}_s(u)|^q \II_{\{\s > t \}} \right) \mu_1(\d u) \d s \le C_{T,\xi} \big( h^q(\D) + \D^{\frac{q}{4}} \big).
\end{aligned}
\end{equation*}
Hence, we have
\begin{equation*}
\begin{aligned}
   & \EE \int_0^{t} \int_{-s}^0 \left(  |x^k(s+u) - X^{k,\D}_s(u)|^q \II_{\{\s > t \}} \right) \mu_1(\d u) \d s \\
   \le &  C \EE \int_0^{t} \left(  |x^k(s) - Z^{k,\D}(s)|^q \II_{\{\s > t \}} \right) \d s + C_{T,\xi} \big( h^q(\D) + \D^{\frac{q}{4}} \big). \\
\end{aligned}
\end{equation*}
Inserting the above inequality into \eqref{3.26} gives
\begin{equation}\label{3-f}
\begin{aligned}
   & \EE \int_0^{t} \left( |f_k(x^k_s, \theta(s)) - f(X^{k,\D}_s, \theta(s))|^q \II_{\{\s > t \}} \right) \d s \\
   \le & C_{T,\xi,\D_1} \left(  h^q(\D)  + \D^{1\we\frac{q}{4}}  + \frac{1}{\Gamma^{-1}(\Delta^{-\lambda})} \right)   +  C \EE \int_0^{t} \left( |x^k(s) - Z^{k,\D}(s)|^q \II_{\{\s > t \}} \right) \d s \\
      \le & C_{T,\xi,\D_1} \left(  h^q(\D)  + \D^{1\we\frac{q}{4}} + \frac{1}{\Gamma^{-1}(\Delta^{-\lambda})} \right)   +   C \int_0^{t}  \EE\Big( \sup_{0 \le u \le s} |x^k(u) - Z^{k,\D}(u)|^q \II_{\{\s > t \}} \Big) \d s. \\
\end{aligned}
\end{equation}
Similarly, 
\begin{equation}\label{3-g}
\begin{aligned}
   & \EE \int_0^{t\we\s} |g_k(x^k_s, \theta(s)) - g(X^{k,\D}_s, \theta(s))|^q \d s \\
      \le & C_{T,\xi,\D_1} \left(  h^q(\D)  + \D^{1\we\frac{q}{4}}  + \frac{1}{\Gamma^{-1}(\Delta^{-\lambda})} \right)   +   C \int_0^{t}  \EE\Big( \sup_{0 \le u \le s} |x^k(u) - Z^{k,\D}(u)|^q \II_{\{\s > t \}} \Big) \d s. \\
\end{aligned}
\end{equation}
Substituting \eqref{3-f} and \eqref{3-g} into \eqref{3.25} yields
\begin{equation*}
\begin{aligned}
   &  \EE\Big( \sup_{0 \le s \le t} |x^k(s)-Z^{k,\D}(s)|^q \II_{\{\s > t \}} \Big)  \\
   \le & C_{T,\xi,\D_1} \left(  h^q(\D)  + \D^{1\we\frac{q}{4}}  + \frac{1}{\Gamma^{-1}(\Delta^{-\lambda})} \right)   +   C_T \int_0^{t}  \EE\Big( \sup_{0 \le u \le s} |x^k(u) - Z^{k,\D}(u)|^q \II_{\{\s > t \}} \Big) \d s \\
   \le & C_{T,\xi,\D_1} \left(  h^q(\D)  + \D^{1\we\frac{q}{4}}  + \frac{1}{\Gamma^{-1}(\Delta^{-\lambda})} \right)   +   C_T \int_0^{t}  \EE\Big( \sup_{0 \le u \le s} |x^k(u) - Z^{k,\D}(u)|^q \II_{\{\s > s \}} \Big) \d s. \\
\end{aligned}
\end{equation*}
By the Gronwall inequality, we obtain
\begin{equation*}
\begin{aligned}
   \EE\Big( \sup_{0 \le s \le t} |x^k(s)-Z^{k,\D}(s)|^q \II_{\{\s > t \}} \Big)  
   \le  C_{T,\xi,\D_1} \left(  h^q(\D)  + \D^{1\we\frac{q}{4}}  + \frac{1}{\Gamma^{-1}(\Delta^{-\lambda})} \right) . 
\end{aligned}
\end{equation*}
The desired assertion then follows from the fact that $C_{T,\xi,\Delta_1}$ is independent of $k$. The proof is complete.
\end{proof}
$\hfill\square$

\begin{lemma}\label{l3.6}
Let the assumptions in Lemma \ref{lemma3.4} hold. Then for any $q\in (0, 2p)$,
$$
\lim_{\D \to 0} \sup_{k \ge T} \sup_{0 \le t \le T}\EE |x^k(t)-X^{k,\D}(t)|^q =0, \quad \forall T > 0.
$$
\end{lemma}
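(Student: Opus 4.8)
The plan is to transfer the controlled estimate of Lemma~\ref{lemma3.4} to the unstopped difference via a stopping-time argument, paying for the exit event through the moment bounds and exit-probability estimates already established. By Jensen's inequality it suffices to treat $q\in[2,2p)$. Fix $T>0$, recall the auxiliary process $Z^{k,\D}$ of \eqref{ap} and the stopping time $\sigma=\sigma^k_{\D,\D_1}=\tau^k_{\Gamma^{-1}(L\D_1^{-\lambda})}\wedge\varrho^k_{\D,\D_1}$ for $\D\in(0,\D_1]$, $\D_1\in(0,1]$.

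First I would split, for each $t\in[0,T]$,
$$\EE|x^k(t)-X^{k,\D}(t)|^q = \EE\big(|x^k(t)-X^{k,\D}(t)|^q\II_{\{\sigma>T\}}\big) + \EE\big(|x^k(t)-X^{k,\D}(t)|^q\II_{\{\sigma\le T\}}\big).$$
For the exit term, Young's inequality shows that for every $\epsilon>0$ there is $C_\epsilon>0$ with
$$\EE\big(|x^k(t)-X^{k,\D}(t)|^q\II_{\{\sigma\le T\}}\big) \le \epsilon\,\EE|x^k(t)-X^{k,\D}(t)|^{2p} + C_\epsilon\,\PP\{\sigma\le T\}.$$
Here $\EE|x^k(t)-X^{k,\D}(t)|^{2p}\le 2^{2p-1}\big(\EE|x^k(t)|^{2p}+\EE|X^{k,\D}(t)|^{2p}\big)$ is bounded uniformly in $k,\D,t$ by Lemma~\ref{lemma3.1} and Theorem~\ref{th-BTN}, while
$$\PP\{\sigma\le T\}\le\PP\{\tau^k_{\Gamma^{-1}(L\D_1^{-\lambda})}\le T\}+\PP\{\varrho^k_{\D,\D_1}\le T\}\le\frac{C_{T,\xi}}{\big(\Gamma^{-1}(L\D_1^{-\lambda})\big)^{2p}}$$
by \eqref{2.9} and Lemma~\ref{l4.6}; crucially this bound depends only on $\D_1$, not on $\D$.

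For the good term, on $\{\sigma>T\}$ one has $t\wedge\sigma=t$, so it is dominated by $\EE|x^k(t\wedge\sigma)-X^{k,\D}(t\wedge\sigma)|^q$, into which I insert $Z^{k,\D}$:
$$\EE|x^k(t\wedge\sigma)-X^{k,\D}(t\wedge\sigma)|^q \le 2^{q-1}\EE\Big(\sup_{0\le s\le T}|x^k(s\wedge\sigma)-Z^{k,\D}(s\wedge\sigma)|^q\Big) + 2^{q-1}\EE|Z^{k,\D}(t\wedge\sigma)-X^{k,\D}(t\wedge\sigma)|^q.$$
The first summand is at most $C_{T,\xi,\D_1}(h^q(\D)+\D^{1\wedge q/4})$ by Lemma~\ref{lemma3.4}. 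For the second, on $[t_j,t_{j+1})$ one has $Z^{k,\D}(t)-X^{k,\D}(t)=f^{k,\D}_j(t-t_j)+g^{k,\D}_j(B(t)-B(t_j))$, so using the linear-growth bound \eqref{linear}, the independence of the increment $B(t)-B(t_j)$ from $\CF_{t_j}$, and Corollary~\ref{coro3.3}, this term is $O(\D^{q(1-\lambda)/2})$; since $\lambda\le1/2$ both summands vanish as $\D\to0$ with $\D_1$ fixed.

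Finally I would assemble the estimates, take the supremum over $k\ge T$ and $t\in[0,T]$, and pass to the limit in the order $\D\to0$, then $\D_1\to0$, then $\epsilon\to0$. Letting $\D\to0$ with $\epsilon,\D_1$ fixed annihilates the good-event contribution (since $h(\D)\to0$ by the uniform continuity of $\xi$ and the powers of $\D$ tend to $0$); since the resulting $\limsup_{\D\to0}$ is independent of $\D_1$, letting $\D_1\to0$ sends the $\PP\{\sigma\le T\}$ term to $0$; and finally $\epsilon\to0$ removes the remaining $\epsilon\,\EE|x^k-X^{k,\D}|^{2p}$ term, yielding the claimed limit $0$. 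The main obstacle is precisely this juggling of the three parameters under the constraint $\D\le\D_1$: one must confirm that the exit-probability bound is genuinely independent of $\D$ (so that $\D\to0$ may legitimately be taken first) and that, although $C_{T,\xi,\D_1}$ blows up as $\D_1\to0$, it is multiplied by the factor $h^q(\D)+\D^{1\wedge q/4}$ that vanishes for each fixed $\D_1$.
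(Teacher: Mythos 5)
Your proposal is correct and follows essentially the same route as the paper's proof: the same splitting over $\{\sigma\le T\}$ (handled by Young's inequality together with Lemma \ref{lemma3.1}, Theorem \ref{th-BTN}, \eqref{2.9} and Lemma \ref{l4.6}) and $\{\sigma>T\}$ (handled by inserting $Z^{k,\D}$, invoking Lemma \ref{lemma3.4}, and bounding the one-step gap $Z^{k,\D}-X^{k,\D}$ via \eqref{linear} and Corollary \ref{coro3.3}), followed by the same three-parameter limiting argument in which the exit-probability bound's independence of $\D$ is the key point. The only cosmetic differences are that the paper works with the deterministic time $t$ on $\{\sigma>T\}$ rather than the stopped time, and records the one-step gap as $O(\D^{q/4})$ rather than your slightly sharper $O(\D^{q(1-\lambda)/2})$.
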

\begin{proof}
The Lyapunov inequality implies that it is enough to consider the case $q \in [2,2p)$. Let $T > 0$ and $\Delta_1 \in (0, 1]$ be arbitrary. For any $k \ge T$ and $\D \in (0,\D_1]$, it follows from Young's inequality that for any $\delta > 0$ and $t \in [0,T]$, 
\begin{equation*}
\begin{aligned}
  & \EE \big|x^k(t) - X^{k,\D}(t)\big|^q \\ 
= & \EE\left( \big|x^k(t)-X^{k,\D}(t)\big|^q\II_{\{\s\le T\}} \right) 
+ \EE\left( \big|x^k(t)-X^{k,\D}(t)\big|^q\II_{\{\s > T\}} \right) \\
\le & \frac{q\de}{2p}\EE\big|x^k(t) - X^{k,\D}(t)\big|^{2p} + \frac{2p-q}{2p\de^{q/(2p-q)}}\PP(\s\le T) \\
+ & 2^{q-1} \EE\left( \big|x^k(t)-Z^{k,\D}(t)\big|^q\II_{\{\s >T\}} \right) + 2^{q-1} \EE\left( \big|Z^{k,\D}(t)-X^{k,\D}(t)\big|^q\II_{\{\s >T\}} \right).
\end{aligned}
\end{equation*}
By virtue of Lemma \ref{lemma3.1} and Theorem \ref{th-BTN}, 
\begin{equation*}
\begin{aligned}
  \frac{q\de}{2p}\EE\big|x^k(t) - X^{k,\D}(t)\big|^{2p}
\le  \frac{2^{2(p-1)}q\de}{p}\left( \EE\big|x^k(t)\big|^{2p} + \EE\big|X^{k,\D}(t)\big|^{2p} \right)  \le C_{T,\xi} \delta.
\end{aligned}
\end{equation*}
It follows from $\eqref{BTSFDE}$ and Lemma \ref{l4.6} that
$$
\begin{aligned}
   \frac{2p-q}{2p\de^{q/(2p-q)}}\PP(\s\le T)  
  & \le  \frac{2p-q}{2p\de^{q/(2p-q)}} \big( \PP(\tau^k_{\G^{-1}(\D_1^{-\lbd})}\le T) + \PP(\varrho^k_{\D, \D_1}\le T) \big) \\
  & \le  \frac{ C_{T,\xi}}{ \de^{q/(2p-q)} \big( \G^{-1}(\D_1^{-\lbd}) \big)^{2p}}.
\end{aligned}
$$
Moreover, for any $t \in [0,T]$, there exists a unique $j \in \mathbb{N}$ such that $t \in [t_j, t_{j+1})$. Then,  it is easy to verify from the Burkholder-Davis-Gundy inequality, \eqref{linear}, Corollary \ref{coro3.3} and $\lambda \in (0, 1/2]$ that
\begin{equation*}
    \EE\left( \big|Z^{k,\D}(t)-X^{k,\D}(t)\big|^q \II_{\{\s >T\}} \right)
   \le  \EE \left( |f^{k,\D}_j|^q \D^q + |g^{k,\D}_j|^q \D^{\frac{q}{2}}  \right)
   \le  C_{T, \xi} \Delta^{\frac{q}{4}}.
\end{equation*}
Hence, 
\begin{equation}\label{3.38}
\begin{aligned}
\EE \big|x^k(t) - X^{k,\D}(t)\big|^q 
\le & C_{T,\xi} \delta  +   C_{T,\xi}  \de^{q/(q-2p)} \big( \G^{-1}(L\D_1^{-\lbd}) \big)^{-2p}  + C_{T,\xi} \Delta^{\frac{q}{4}} \\
+ & 2^{q-1} \EE \left( \sup_{0 \le t \le T} \big|x^k(t)-Z^{k,\D}(t)\big|^q \II_{\{\s > T \}}\right). \\
\end{aligned}
\end{equation}
An application of Lemma \ref{lemma3.4} yields that for any $k \ge T$, 
$$
\begin{aligned}
\EE \big|x^k(t) - X^{k,\D}(t)\big|^q  
   \le &    C_{T,\xi} \delta  +   C_{T,\xi}  \de^{q/(q-2p)} \big( \G^{-1}(\D_1^{-\lbd}) \big)^{-2p} \\
  + &  C_{T,\Delta_1,\xi} \left( h^q(\Delta)  + \Delta^{1 \wedge \frac{q}{4}} + \big( \G^{-1}(\D^{-\lbd}) \big)^{-1}  \right).
\end{aligned}
$$
{  We derive the desired assertion by the standard argument (see e.g. \cite[Theorem $3.2$]{LMS24} or \cite[Theorem $3.3$]{SHGL22}).}
The proof is complete.
\end{proof}
$\hfill\square$

Lemma \ref{l3.6} reveals that $X^{k,\Delta}(t)$ converges to $x^k(t)$ as $\Delta \to 0$ while Lemma \ref{lemma3.1} establishes the convergence between $x^k(t)$ and $x(t)$ as $k \to +\infty$. Combining both them, we  conclude that $X^{k,\Delta}(t)$ converges to $x(t)$ as $k \to +\infty$ and $\Delta \to 0$.

\begin{theorem}\label{th3.7}
Let Assumptions \ref{Lip} and \ref{mon} hold with $p > 1$, $\mu_1 \in \CP_{b}$ with $b > 2r$, and $\xi$ be uniformly continuous on $\RR_-$. Then, for any $q\in (0, 2p)$,
\begin{equation}\label{3.41}
\lim_{\D\rightarrow 0, k \to +\infty} \sup_{0 \le t \le T}\EE|x(t)-X^{k,\D}(t)|^q=0, \quad \forall T>0.
\end{equation}
\end{theorem}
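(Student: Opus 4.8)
The plan is to bridge the numerical solution $X^{k,\Delta}$ and the target exact solution $x$ of the type-(B) equation \eqref{ISFDE} through the intermediate type-(A) exact solution $x^k$, exactly as the paragraph preceding the statement suggests. Using the elementary inequality $|a+b|^q \le 2^{(q-1)\vee 0}(|a|^q + |b|^q)$, for any $t \in [0,T]$ one has
\begin{equation*}
\EE|x(t) - X^{k,\Delta}(t)|^q \le 2^{(q-1)\vee 0}\Big( \EE|x(t) - x^k(t)|^q + \EE|x^k(t) - X^{k,\Delta}(t)|^q \Big).
\end{equation*}
Taking $\sup_{0 \le t \le T}$ splits the problem into the two convergences that are already under control: the first term is handled by \eqref{con} in Lemma \ref{lemma3.1} (available precisely because the hypothesis $\mu_1 \in \CP_b$ with $b > 2r$ is imposed), and the second by Lemma \ref{l3.6}.

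The only genuine point to watch is that the conclusion \eqref{3.41} is a \emph{joint} limit in $(\Delta, k)$, so I would run an $\varepsilon/2$ argument that respects the different natures of the two limits. Given $\varepsilon > 0$, I would first invoke \eqref{con} to choose $K \ge T$ so large that $\sup_{0 \le t \le T}\EE|x(t) - x^k(t)|^q < \varepsilon / 2^{1 + (q-1)\vee 0}$ for every $k > K$. Next, I would invoke Lemma \ref{l3.6} — whose convergence as $\Delta \to 0$ is uniform over all $k \ge T$ — to choose $\Delta^* \in (0,1]$ such that $\sup_{k \ge T}\sup_{0 \le t \le T}\EE|x^k(t) - X^{k,\Delta}(t)|^q$ lies below the same threshold for every $\Delta \in (0, \Delta^*]$.

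Then for every $\Delta \in (0, \Delta^*]$ and every $k > K$ (and here $k > K \ge T$ guarantees that both lemmas apply simultaneously), the displayed bound yields $\sup_{0 \le t \le T}\EE|x(t) - X^{k,\Delta}(t)|^q < \varepsilon$, which is exactly \eqref{3.41}. The main — and essentially the only — obstacle is the bookkeeping of this double limit: the whole argument hinges on the uniformity $\sup_{k \ge T}$ built into Lemma \ref{l3.6}, since without it one could not send $\Delta \to 0$ and $k \to +\infty$ at the same time. Everything else is a direct concatenation of results already established, so no new estimates are required.
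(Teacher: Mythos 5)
Your proposal is correct and follows exactly the paper's intended argument: the paper proves Theorem \ref{th3.7} by the same triangle-inequality decomposition through $x^k(t)$, combining \eqref{con} of Lemma \ref{lemma3.1} with Lemma \ref{l3.6}, and explicitly notes that the joint limit works precisely because of the uniformity in $k$ established in Lemma \ref{l3.6}. Your $\varepsilon/2$ bookkeeping just makes this explicit, so nothing is missing.
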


Theorem \ref{th3.7} shows that the order of the limit processes $\Delta \to 0$ and $k \to +\infty$ can be interchanged. This follows from the uniform convergence of $X^{k,\Delta}(t)$ to $x^k(t)$ with respect to $k$, as established in Lemma \ref{l3.6}.

\section{Convergence rate}\label{S4}

The convergence of the TEM solution $X^{k,\Delta}(t)$ to the exact $x(t)$ is established in Theorem \ref{th3.7}. This section goes a further step to establish the convergence rate of the TEM numerical solution by choosing the appropriate $k$ and $\Delta$.
In order for the convergence accuracy we impose a slightly stronger assumption. For any $\iota_1, \iota_2>0$, let $\CU_{\iota_1,\iota_2}$ denote the family of continuous functions $U: \RR^n \K \RR^n \rightarrow \RR_+$ satisfying $U(x,x) = 0$ and
$$
\sup_{  x,y\in\RR^n, x \neq y}\frac{U(x,y)}{|x-y|^{\iota_1}(1+|x|^{\iota_2}+|y|^{\iota_2})}<\infty.
$$

\begin{assp}\label{a4.1}
There exist constants $d_1>0$ and $d_2 \ge 1/2$ such that the initial data $\xi$ satisfies
$$
|\xi(t_1)-\xi(t_2)|\le d_1|t_1-t_2|^{d_2},\quad \forall t_1, t_2 \in (-\8, 0].
$$
\end{assp}

{ \begin{assp}\label{a4.2}
There exist constants $\td p>2$,  $ d_3, \iota>0$ and $\a_1>2r$, $\a_2>(2+\iota) r$,   a function $U \in \CU_{2,\iota}$, and measures $\n_1\in \CP_{\a_1}, \n_2\in\CP_{\a_2}$ such that  
$$
\begin{aligned}
& 2 \big\langle \p(0)-\f(0), f(\p,i)-f(\f,i)\big\rangle + (\td p-1)|g(\p,i)-g(\f,i)|^2 \\
\le &d_3  \int_{-\8}^0|\p(u)-\f(u)|^2\n_1(\d u) 
- U(\p(0),\f(0))+\int_{-\8}^0 U(\p(u),\f(u))\n_2(\d u)
\end{aligned}
$$
for any $\p,\f\in \CC_r$ and   $i\in\SM$.
\end{assp}}

\begin{assp}\label{a4.3}
There exist constants $d_4, v>0$ and  measures   $\nu_3\in\CP_{2r}$, $\n_4, \n_5 \in \CP_{vr}$ such that  for any $\p, \f\in \CC_r$ and  $i\in\SM$
$$
\begin{aligned}
|f(\p,i)-f(\f,i)|
\le  d_4\int_{-\8}^0|\p(u)-\f(u)|\n_3(\d u) \left( 1+ \int_{-\8}^0\big(|\p(u)|^v+|\f(u)|^v\big)\n_4(\d u)   \right),
\end{aligned}
$$
$$
\begin{aligned}
|g(\p,i)-g(\f,i)|^2
\le  d_4\int_{-\8}^0|\p(u)-\f(u)|^2\n_3(\d u)
\left( 1+  \int_{-\8}^0\big(|\p(u)|^v+|\f(u)|^v\big)\n_5(\d u)   \right).
\end{aligned}
$$
\end{assp}
{\begin{rmk}\label{rmk4.4}
Under Assumption \ref{a4.3}, choose~
$ 
\G(R)=  d_5 \left( 1 + d_6 R^v  \right), \quad \forall R \ge 0 
$  in \eqref{G},
where
$ 
d_5 = 2 \left( d_4 \vee \left( \sup_{i\in\SM}|f(0,i)|\right) \vee \left( \sup_{i\in\SM}|g(0,i)|^2 \right) \right), ~~d_6 =\left( \nu_4^{(vr)} \vee  \nu_5^{(vr)}\right).
$ 
Then the truncation mapping   
$ 
\Lambda^{\Delta}(x) = \left(|x| \wedge \left(\frac{ \Delta^{-\lbd }}{d_5d_6}- \frac{1}{d_6} \right)^{\frac{1}{v}}  \right) \frac{x}{|x|}, \quad \forall x \in \RR^n,
$ 
where 
$ 
\lbd = \frac{v}{2(p-1)}. 
$ Obviously, $0 <\lbd \le1/2$ if $p \ge v+1$. Thus,  $f$ and $g$ satisfy
{ \begin{equation}\label{linear-rate}
\begin{aligned}
|f(X^{k,\D}_{t_j},i)|^2 &\le  \D^{-2\lbd}\left( 1 + \int_{-\8}^0 |X^{k,\D}_{t_j}(u)|^2  \nu_3(\d u) \right),\\
|g(X^{k,\D}_{t_j},i)|^2  &\le \D^{-\lbd} \left( 1 + \int_{-\8}^0 |X^{k,\D}_{t_j}(u)|^2 \nu_3(\d u) \right).
\end{aligned}
\end{equation}}
\end{rmk}

We cite the result on the convergence   of $x^k(t)$ to $x(t)$ from \cite[Theorem 3.7]{LLMS23}.
\begin{lemma}\label{th4.5} {\bf(\cite[Theorem 3.7]{LLMS23})}
Let Assumptions \ref{a4.2} and \ref{a4.3} hold. Then both HSFDE \eqref{ISFDE} and \eqref{TSFDE} has a unique global solution  $x(t)$ and $x^k(t)$, respectively,  for $t\in (-\8, \8)$ with the property
\begin{equation*}
\sup_{0 \le t \le T}\EE|x(t)-x^k(t)|^2\le C_T e^{-\alpha k}, \quad \forall T > 0, ~ k \ge T.
\end{equation*}
where $\alpha = \left(\alpha_1 - 2r \right) \wedge \left( \alpha_2 - (2+\iota)r \right)$.
\end{lemma}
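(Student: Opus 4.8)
The plan is to argue the rate directly, following the scheme of \cite[Theorem 3.7]{LLMS23} rather than merely invoking it. Since Assumption \ref{a4.3} implies the local Lipschitz condition of Assumption \ref{Lip} and Assumption \ref{a4.2} (take $\varphi\equiv 0$) supplies a one-sided (Khasminskii-type) growth bound, the existence and uniqueness of the global solutions $x$ and $x^k$, together with the uniform-in-$k$ moment bounds of sufficiently high order furnished by Theorem \ref{th2.3} and Lemma \ref{lemma3.1}, are already in force; what remains is the decay $e^{-\alpha k}$. I would set $e^k(t)=x(t)-x^k(t)$, which vanishes on $(-\infty,0]$. Recalling $f_k(x^k_t,i)=f(\pi_k(x^k_t),i)$ and $g_k(x^k_t,i)=g(\pi_k(x^k_t),i)$, I apply It\^o's formula to $|e^k(t)|^2$ with $\phi=x_t$ and $\varphi=\pi_k(x^k_t)$, so that $\phi(0)-\varphi(0)=e^k(t)$. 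Because $x$ and $x^k$ are driven by the \emph{same} chain $\theta(t)$ and $|e^k|^2$ is mode-independent, the generator term $\sum_j q_{\theta(t)j}|e^k|^2$ vanishes and $|e^k(t)|^2$ is a continuous semimartingale whose drift is exactly $2\langle e^k(t), f(\phi,\theta(t))-f(\varphi,\theta(t))\rangle+|g(\phi,\theta(t))-g(\varphi,\theta(t))|^2$.

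Next I invoke Assumption \ref{a4.2}. Since $\tilde p>2$ gives $\tilde p-1>1$, the diffusion square is dominated by $(\tilde p-1)|g(\phi,\theta)-g(\varphi,\theta)|^2$, so the drift is bounded above by
$$d_3\int_{-\infty}^0 |x_t(u)-\pi_k(x^k_t)(u)|^2\nu_1(\d u)-U(x(t),x^k(t))+\int_{-\infty}^0 U(x_t(u),\pi_k(x^k_t)(u))\nu_2(\d u).$$
The crucial step is to split each infinite-delay integral at $u=-k$: on $[-k,0]$ one has $\pi_k(x^k_t)(u)=x^k(t+u)$, so the integrands become $|e^k(t+u)|^2$ and $U(x(t+u),x^k(t+u))$, while on $(-\infty,-k)$ the truncated argument is frozen at $x^k(t-k)$.

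The exponential decay comes from the tails $u<-k$. Using the fading-memory bound $|\phi(u)|\le e^{-ru}\|\phi\|_r$, for the $\nu_1$-tail I write $e^{-2ru}=e^{(\alpha_1-2r)u}e^{-\alpha_1 u}\le e^{-(\alpha_1-2r)k}e^{-\alpha_1 u}$ (valid since $\alpha_1-2r>0$ and $u<-k$) and integrate against $\nu_1\in\CP_{\alpha_1}$, extracting $e^{-(\alpha_1-2r)k}\nu_1^{(\alpha_1)}$ times $\|x_t\|_r^2+\|x^k_t\|_r^2$. For the $\nu_2$-tail I use $U\in\CU_{2,\iota}$, i.e. $U(x,y)\le C|x-y|^2(1+|x|^\iota+|y|^\iota)$, so the integrand grows at most like $e^{-(2+\iota)ru}$ in $u$ and $e^{(2+\iota)rk}$ in $k$; the same factoring with $\nu_2\in\CP_{\alpha_2}$ and $\alpha_2>(2+\iota)r$ yields $e^{-(\alpha_2-(2+\iota)r)k}$. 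Taking expectations and using the uniform moment bounds (of order at least $2+\iota$) bounds both tails by $C_T e^{-\alpha k}$ with $\alpha=(\alpha_1-2r)\wedge(\alpha_2-(2+\iota)r)$.

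Finally I integrate in time over $[0,t]$ and take expectations, which kills the stochastic integral. For the diagonal $\nu_2$-contribution, Fubini with the change of variables $s'=s+u$, together with $\nu_2(\RR_-)=1$ and the vanishing of both $U(\cdot,\cdot)$ and $e^k$ on $(-\infty,0]$ (where $x=x^k=\xi$), gives
$$\int_0^t\int_{-k}^0 U(x(s+u),x^k(s+u))\nu_2(\d u)\d s\le \int_0^t U(x(s),x^k(s))\d s,$$
which cancels the boundary term $-\int_0^t U(x(s),x^k(s))\d s$; the identical argument bounds the diagonal $\nu_1$-term by $d_3\int_0^t\EE|e^k(s)|^2\d s$. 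Collecting everything yields $\EE|e^k(t)|^2\le d_3\int_0^t\EE|e^k(s)|^2\d s+C_T e^{-\alpha k}$, and Gronwall's inequality closes the estimate. I expect the main obstacle to be carrying out the tail/diagonal split simultaneously for the \emph{nonlinear} functional $U$: one must force the $U$-terms to cancel through the past-vanishing Fubini identity while still extracting the sharp exponent $\alpha$ from the frozen tail, which is exactly where the gaps $\alpha_1-2r$ and $\alpha_2-(2+\iota)r$ and the availability of $(2+\iota)$-th moments enter.
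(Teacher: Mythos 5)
The paper offers no argument of its own here: Lemma \ref{th4.5} is a one-line import of \cite[Theorem 3.7]{LLMS23}, so any direct proof is necessarily a reconstruction of the cited argument. Your skeleton is the right one --- It\^o's formula for $|e^k(t)|^2$ (with the switching term vanishing since $|\cdot|^2$ is mode-independent), Assumption \ref{a4.2} applied with $\phi=x_t$, $\varphi=\pi_k(x^k_t)$, the split of both delay integrals at $u=-k$, the Fubini/past-vanishing cancellation of the diagonal $\nu_1$- and $U$-contributions, and Gronwall --- and those steps are sound.

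The genuine gap is in your treatment of the tails $u<-k$. You bound them by quantities of order $2+\iota$ in the solutions and then appeal to ``uniform-in-$k$ moment bounds of sufficiently high order furnished by Theorem \ref{th2.3} and Lemma \ref{lemma3.1}.'' Those results are proved under Assumptions \ref{Lip} and \ref{mon}; Assumption \ref{mon} is not a hypothesis of this lemma, and it does not follow from Assumption \ref{a4.2}: taking $\varphi\equiv 0$ there yields only a second-moment ($p=1$) Khasminskii-type condition, and since $\nu_1$ is only assumed to lie in $\CP_{\alpha_1}$ with $\alpha_1>2r$ (not in $\CP_{(2+\iota)r}$), the usual It\^o--Fubini route to moments of order $2+\iota>2$ is blocked as well. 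So under the stated hypotheses your key estimate for the $U$-tail is unjustified. What rescues the lemma --- and what your proof never invokes --- is precisely the restriction $k\ge T$ appearing in the statement: for $0\le s\le t\le T\le k$ and $u<-k$ one has $s+u\le s-k\le 0$, hence $x_s(u)=\xi(s+u)$ and $\pi_k(x^k_s)(u)=x^k(s-k)=\xi(s-k)$, so both tails involve only the deterministic initial segment. The fading-memory bound $|\xi(w)|\le e^{-rw}\|\xi\|_r$ together with your own exponent-factoring then gives
$$
\int_{-\infty}^{-k}\big|\xi(s+u)-\xi(s-k)\big|^2\,\nu_1(\d u)\le 4\|\xi\|_r^2\,\nu_1^{(\alpha_1)}e^{-(\alpha_1-2r)k},
$$
and likewise the $U$-tail is at most $C_\xi\,\nu_2^{(\alpha_2)}e^{-(\alpha_2-(2+\iota)r)k}$, with no solution moments needed at all. (The same imprecision affects your well-posedness sentence, but harmlessly: non-explosion only requires the second-moment condition that Assumption \ref{a4.2} does supply.) If you replace the moment-based tail estimate by this observation, the rest of your argument closes and delivers exactly the claimed rate $\alpha=(\alpha_1-2r)\wedge(\alpha_2-(2+\iota)r)$; as written, however, the proposal rests on moment bounds that are unavailable under the lemma's hypotheses.
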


By virtue of Lemma \ref{th4.5}, we turn to 
analyze the convergence rate of $X^{k,\D}(t)$ to $x^k(t)$.
For any $t \in [0,T]$, let
\begin{equation}\label{def-Z_t}
    Z^{k,\D}_t(u)=\left\{
\begin{aligned}
    & Z^{k,\D}(t+u),\quad u \in [-k,0],\\
    & Z^{k,\D}(t-k), \quad u \in (-\infty, -k),
\end{aligned}\right.
\end{equation}
where $Z^{k,\Delta}(\cdot)$ is defined by \eqref{ap}.
The error between   $Z^{k,\Delta}_t(\cdot)$ and  $X^{k,\Delta}_t(\cdot)$ is estimated as follows, where $X^{k,\Delta}_{t}(\cdot)$ is defined in \eqref{n-seg}.

{ \begin{lemma}\label{l4.7}
Let Assumptions \ref{mon}, \ref{a4.1} and \ref{a4.3} hold with $p \ge v+1$. Then for any $\mu \in \CP_{2r}$, $T > 0$ and $\D \in (0,1]$,
$$
   \sup_{k \ge 1} \sup_{0 \le t \le T}   \int_{-\infty}^0 \EE  \big|Z^{k,\D}_t(u)-X^{k,\D}_t(u)\big|^{\frac{4p}{v+2}}  \mu(\d u)  \le C_{T,\xi} \D^{\frac{2p}{v+2}}.
$$
\end{lemma}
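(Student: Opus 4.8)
The plan is to fix $T>0$, $t\in[0,T]$, $u\le 0$ and $\Delta\in(0,1]$, write $t\in[t_j,t_{j+1})$, set $q^\ast=\frac{4p}{v+2}$ (note $q^\ast<2p$ and $\Delta^{2p/(v+2)}=\Delta^{q^\ast/2}$, so the target is an order-$\tfrac12$ bound in $L^{q^\ast}$), and reduce the problem to estimating a few Euler-type increments. First I would dispose of the two easy ranges: for $u<-k$ one has $Z^{k,\Delta}_t(u)=Z^{k,\Delta}(t-k)$ and $X^{k,\Delta}_t(u)=X^{k,\Delta}(t_{j}-k)$, so this case coincides with the boundary value $u=-k$; and whenever the relevant absolute times are nonpositive the two interpolations reduce to interpolations of the initial datum $\xi$, whose increments are controlled by Assumption \ref{a4.1}, since $|\xi(\tau_1)-\xi(\tau_2)|\le d_1|\tau_1-\tau_2|^{d_2}$ with $d_2\ge\tfrac12$ gives a contribution of order $\Delta^{q^\ast d_2}\le \Delta^{q^\ast/2}$. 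Thus it remains to treat $u\in[-k,0]$ with $t+u\ge 0$.

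For such $u$, write $u\in[t_m,t_{m+1}]$ and $t+u\in[t_i,t_{i+1})$; since $t-t_j\in[0,\Delta)$ one checks $t+u\in[t_{j+m},t_{j+m}+2\Delta)$, so $t_i\in\{t_{j+m},t_{j+m+1}\}$. I would then insert grid values and use the triangle inequality,
\begin{equation*}
\big|Z^{k,\Delta}_t(u)-X^{k,\Delta}_t(u)\big|\le \big|Z^{k,\Delta}(t+u)-X^{k,\Delta}(t_i)\big|+\big|X^{k,\Delta}(t_i)-X^{k,\Delta}(t_{j+m})\big|+\big|X^{k,\Delta}(t_{j+m})-X^{k,\Delta}_{t_j}(u)\big|.
\end{equation*}
The first term is exactly the sub-step Euler increment $f^{k,\Delta}_i(t+u-t_i)+g^{k,\Delta}_i(B(t+u)-B(t_i))$ from \eqref{ap}. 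The second and third terms are each bounded by a single grid increment $|X^{k,\Delta}(t_{\ell+1})-X^{k,\Delta}(t_{\ell})|$; here the key observation is that $\Lambda^\Delta$ is the radial projection onto the ball of radius $\Gamma^{-1}(L\Delta^{-\lambda})$, hence nonexpansive, and every computed grid value already lies in that ball, so $|X^{k,\Delta}(t_{\ell+1})-X^{k,\Delta}(t_{\ell})|\le |f^{k,\Delta}_{\ell}\Delta+g^{k,\Delta}_{\ell}\Delta B_{\ell}|$, while increments falling inside the initial segment are instead handled by Assumption \ref{a4.1}. Consequently everything reduces to the $q^\ast$-moments of Euler increments $f^{k,\Delta}_\bullet\rho+g^{k,\Delta}_\bullet(B(\cdot)-B(\cdot))$ over time steps $\rho\le\Delta$.

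The decisive point is that $f$ and $g$ must be estimated differently. For the diffusion part, conditioning on the relevant $\sigma$-field and using Gaussian moments gives $\EE|g^{k,\Delta}_\bullet(B(\cdot)-B(\cdot))|^{q^\ast}\le C\Delta^{q^\ast/2}\,\EE|g^{k,\Delta}_\bullet|^{q^\ast}$, so no further powers of $\Delta$ may be spent: I would bound $\EE|g^{k,\Delta}_\bullet|^{q^\ast}$ by a constant, \emph{not} by the truncation bound \eqref{linear-rate}. Using the polynomial growth extracted from Assumption \ref{a4.3}, $|g(\phi,i)|^2\le C(1+\int|\phi|^2\nu_3\,(1+\int|\phi|^v\nu_5))$, and splitting the product by Hölder with conjugate exponents $\tfrac{v+2}{2}$ and $\tfrac{v+2}{v}$, the two resulting moments have orders $\tfrac{q^\ast}{2}\cdot\tfrac{v+2}{2}=p$ and $\tfrac{q^\ast}{2}\cdot\tfrac{v+2}{v}=\tfrac{2p}{v}$; both are bounded uniformly in $k,\Delta,t$ by Corollary \ref{coro3.3} (using $\nu_3\in\CP_{2r}$, $\nu_5\in\CP_{vr}$ and $v<2p$). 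This is exactly why the exponent $q^\ast=\tfrac{4p}{v+2}$ is chosen: the effective polynomial degree $(v+2)\cdot\tfrac{q^\ast}{2}$ equals precisely $2p$, the highest available moment. For the drift part there is an extra factor $\rho^{q^\ast}\le\Delta^{q^\ast}$ to spend, so the cruder truncation bound $|f^{k,\Delta}_\bullet|\le L\Delta^{-\lambda}(1+\int|X^{k,\Delta}_\bullet|^2\nu_3)^{1/2}$ from \eqref{linear-rate} suffices, yielding $\EE|f^{k,\Delta}_\bullet\rho|^{q^\ast}\le C\Delta^{q^\ast(1-\lambda)}\le C\Delta^{q^\ast/2}$, where the last inequality uses $\lambda=\tfrac{v}{2(p-1)}\le\tfrac12$ (guaranteed by $p\ge v+1$).

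Summing the three terms gives $\EE|Z^{k,\Delta}_t(u)-X^{k,\Delta}_t(u)|^{q^\ast}\le C_{T,\xi}\Delta^{q^\ast/2}$, uniformly in $u\le0$, $t\in[0,T]$ and $k\ge1$, which is the claim. I expect the main obstacle to be precisely this asymmetric treatment of the coefficients together with the degree-matching: one must resist using the uniform truncation bound for $g$ (which would lose a factor $\Delta^{-\lambda q^\ast/2}$ and destroy the rate) and instead exploit the genuine polynomial growth so that Hölder lands exactly on the $2p$-th moment. A secondary, more bookkeeping-type difficulty is the case analysis at the interface $t+u=0$ and for the initial grid points, where $\xi(t_m)$ need not lie in the truncation ball and the nonexpansiveness argument has to be replaced by the Hölder continuity of $\xi$ from Assumption \ref{a4.1}.
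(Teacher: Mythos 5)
Your proposal is correct and follows essentially the same route as the paper's proof: reduce to $u\in[-k,0]$, split according to whether the relevant times fall in the initial segment (handled by Assumption \ref{a4.1} with $d_2\ge 1/2$) or in $[0,\infty)$ (handled by grid increments plus Euler sub-step increments via the nonexpansiveness of $\L^\D$), and then treat drift and diffusion asymmetrically — the drift via the truncation bound \eqref{linear-rate} compensated by the extra $\D^{q^\ast}$ factor and $\lambda\le 1/2$, the diffusion via the polynomial growth in Assumption \ref{a4.3} split (Young/H\"older with exponents $\tfrac{v+2}{2},\tfrac{v+2}{v}$) so as to land exactly on the $p$-th and $2p/v$-th moments controlled by Corollary \ref{coro3.3}. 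The only differences are organizational (your triangle-inequality decomposition through grid points versus the paper's explicit five-case enumeration), not substantive.
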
}

\begin{proof}
{ Let $k \ge 1$, $T > 0$ and $\D \in (0, 1]$ be given arbitrarily. For any $t \in [0,T]$ and $u\in[-k,0)$, there exists a unique pair of integers $j \ge 0$ and $m\le -1$ such that $t\in[t_j, t_{j+1})$ and $u\in[t_m, t_{m+1})$. Clearly,
$$
  t_j+u\in[t_{j+m},t_{j+m+1}), \quad t+u\in[t_{j+m},t_{j+m+2}).
$$
To obtain the desired result, we divide into five cases to discuss.

\textbf{Case $\bm{1.}$ $\bm{t+u\in [t_{j+m},t_{j+m+1})\subset(-\infty,0).}$}It follows from \eqref{LI} and \eqref{ap} that
$$
\begin{aligned}
&|Z^{k,\D}_t(u) - X^{k,\D}_t(u)|^{\frac{4p}{v+2}} = |Z^{k,\Delta}(t+u) - X^{k,\Delta}_{t_j}(u)|^{\frac{4p}{v+2}} \\
=&\Big| \xi(t+u)-\frac{t_{m+1}-u}{\D} \Lambda^{\Delta}\left(\xi(t_{j+m}) \right)-\frac{u-t_{m}}{\D} \Lambda^{\Delta} \left( \xi(t_{j+m+1}) \right) \Big|^{\frac{4p}{v+2}} \\
\le & 4^{\frac{4p-v-2}{v+2}} \Big(| \xi(t+u) - \xi(t_{j+m}) |^{\frac{4p}{v+2}} +  | \xi(t+u) - \xi(t_{j+m+1}) |^{\frac{4p}{v+2}} \\
+ &  | \xi(t_{j+m}) - \Lambda^{\Delta}(\xi(t_{j+m})) |^{\frac{4p}{v+2}} +  | \xi(t_{j+m+1}) - \Lambda^{\Delta}(\xi(t_{j+m+1})) |^{\frac{4p}{v+2}} \Big) \\
\end{aligned}
$$
By Assumption \ref{a4.1}, Jensen's inequality and $d_2 \ge 1/2$, 
$$
 |\xi(t+u)-\xi(t_{j+m})|^{\frac{4p}{v+2}} \vee |\xi(t+u)-\xi(t_{j+m+1})|^{\frac{4p}{v+2}} \le d_1^{\frac{4p}{v+2}} \D^{\frac{2p}{v+2}}.
$$
Applying \eqref{tnt} with $\tilde\gamma = 4p/(v+2)$ and $\gamma = 4p(p-1)/(v+2)$, together with Remark \ref{rmk4.4}, it follows that
$$
\left|\xi(u)-\Lambda^{\Delta}(\xi(u))\right                                  |^{\frac{4p}{v+2}} \le C \left( 1 + |u|^{\frac{4p^2}{v+2}} \right) \Delta^{\frac{2p}{v+2}}, \quad \forall u \le 0.
$$
Therefore, 
$$
\begin{aligned}
\EE \big|Z^{k,\D}_t(u) - X^{k,\D}_t(u)\big|^{\frac{4p}{v+2}} 
\le & C \left( 1 + |t_{j+m}|^{\frac{4p^2}{v+2}} + |t_{j+m+1}|^{\frac{4p^2}{v+2}} \right) \Delta^{\frac{2p}{v+2}} \\
\le & C_T \left( 1 + |u|^{\frac{4p^2}{v+2}} \right) \Delta^{\frac{2p}{v+2}}. 
\end{aligned}
$$

\textbf{Case $\bm{2.}$ $\bm{t+u\in[t_{j+m+1},t_{j+m+2})\subset(-\infty,0).}$}
Utilizing \eqref{LI} and \eqref{ap} again yields
$$
\begin{aligned}
& |Z^{k,\D}_t(u) - X^{k,\D}_t(u)| \\
= & \Big| \xi(t+u)  -\frac{t_{m+1}-u}{\D} \Lambda^{\Delta}(\xi(t_{j+m}))-\frac{u-t_m}{\D} \Lambda^{\Delta}(\xi(t_{j+m+1})) \Big| \\
\le & |\xi(t+u)-\xi(t_{j+m+1})| +  |\xi(t_{j+m+1})-\Lambda^{\Delta}(\xi(t_{j+m+1}))| + | \Lambda^{\Delta}(\xi(t_{j+m+1})) -  \Lambda^{\Delta}(\xi(t_{j+m}))|\\
\le & |\xi(t+u)-\xi(t_{j+m+1})| +  |\xi(t_{j+m+1})-\Lambda^{\Delta}(\xi(t_{j+m+1}))| + |\xi(t_{j+m+1}) -\xi(t_{j+m})|.\\
\end{aligned}
$$
Then, as in \textbf{Case $\bm{1}$}, we deduce that
$$
\EE\big|Z^{k,\D}_t(u) - X^{k,\D}_t(u)\big|^{\frac{4p}{v+2}}\le C_T \left( 1 + |u|^{\frac{4p^2}{v+2}} \right) \Delta^{\frac{2p}{v+2}}.
$$}

\textbf{Case $\bm{3.}$ $\bm{t+u\in[t_{j+m},t_{j+m+1})\subset[0,\8).}$} 
In this case, $j+m\ge 0$. By \eqref{LI} and \eqref{ap},
$$
\begin{aligned}
& \big| Z^{k,\D}_t(u) - X^{k,\D}_t(u) \big|  \\
\le & \big| X^{k,\D}(t_{j+m+1}) - X^{k,\D}(t_{j+m}) \big| +  \big| f^{k,\D}_{j+m}\big| \D  + \big| g^{k,\D}_{j+m}(B(t+u)-B(t_{j+m})) \big|   \\
\le & \big|Y^{k,\D}(t_{j+m+1})-X^{k,\D}(t_{j+m})\big| + \big| f^{k,\D}_{j+m}\big| \D  + \big| g^{k,\D}_{j+m}(B(t+u)-B(t_{j+m})) \big| \\
\le & 2\big|f^{k,\D}_{j+m}\big|\D  + \big|g^{k,\D}_{j+m}\big|\big|\D B_{j+m}\big|  + \big|g^{k,\D}_{j+m}\big|\big|B(t+u)-B(t_{j+m})\big|,
\end{aligned}
$$
where $f^{k,\D}_{j+m}$ and $g^{k,\D}_{j+m}$ are defined in the proof of Theorem \ref{th-BTN}.
Then,
\begin{equation}\label{4.2}
\begin{aligned}
      \EE  \big|Z^{k,\D}_t(u)-X^{k,\D}_t(u)\big|^{\frac{4p}{v+2}}  \le  C \EE\left(\big|f^{k,\D}_{j+m}\big|^{\frac{4p}{v+2}}\D^{\frac{4p}{v+2}} + \big|g^{k,\D}_{j+m}\big|^{\frac{4p}{v+2}}\D^{\frac{2p}{v+2}}\right).
\end{aligned}
\end{equation}
It follows from \eqref{linear-rate}, the fact that $\lbd \in (0, 1/2]$ and Corollary \ref{coro3.3} that
\begin{equation}\label{est-f}
\begin{aligned}
      \EE \left( \big| f^{k,\D}_{j+m} \big|^{\frac{4p}{v+2}} \D^{\frac{4p}{v+2}}\right)  \le  C \D^{\frac{2p}{v+2}}  \left(1 + \EE  \left( \int_{-\8}^0 \big|X^{k,\D}_{t_{j+m}}(u)\big|^2 \nu_3(\d u)\right)^{\frac{2p}{v+2}} \right) \le C_{T,\xi} \D^{\frac{2p}{v+2}}.
\end{aligned}
\end{equation}
While, by virtue of Assumption \ref{a4.3} and Young's inequality,
\begin{equation*}
\begin{aligned}
      \EE\big|g^{k,\D}_{j+m}\big|^{\frac{4p}{v+2}}
      \le& C + C\EE\bigg[ \int_{-\8}^0\big|X^{k,\D}_{t_{j+m}}(u)\big|^2 \n_3(\d u) \bigg( 1 + \int_{-\8}^0 \big|X^{k,\D}_{t_{j+m}}(u)\big|^v \n_5(\d u) \bigg) \bigg] ^{\frac{2p}{v+2}} \\
      \le & C  +   C \EE \left( \int_{-\8}^0 \big| X^{k,\D}_{t_{j+m}}(u) \big|^2 \n_3(\d u) \right)^p  +   C  \EE \left( \int_{-\8}^0 \big| X^{k,\D}_{t_{j+m}}(u) \big|^v \n_5(\d u) \right)^{\frac{2p}{v}}.
\end{aligned}
\end{equation*}
It follows from $\nu_5 \in \CP_{vr}$, $p \ge v+1$ and Corollary \ref{coro3.3} that 
\begin{equation}\label{est-g}
\EE \big|g^{k,\D}_{j+m}\big|^{\frac{4p}{v+2}} \le C_{T,\xi}.
\end{equation}
Inserting \eqref{est-f} and \eqref{est-g} into \eqref{4.2}, we obtain
$$
\EE \big|Z^{k,\D}_t(u) - X^{k,\D}_t(u)\big|^{\frac{4p}{v+2}} \le C_{T,\xi} \D^{\frac{2p}{v+2}}.
$$

\textbf{Case $\bm{4.}$ $\bm{t+u\in[t_{j+m+1},t_{j+m+2})=[0,\D).}$}
In this case, we have
$$t_j+u\in [t_{j+m},t_{j+m+1})=[-\D, 0).$$
Thus, 
{ \begin{equation*}
\begin{aligned}
\big|Z^{k,\D}_t(u) - X^{k,\D}_t(u)\big| 
\le & |\Lambda^{\Delta}(\xi(0))- \Lambda^{\Delta}(\xi(-\D))|  +  \big| f^{k,\D}_0 (t+u) \big|   +  \big| g^{k,\D}_0 B(t+u) \big| \\
\le & |\xi(0)- \xi(-\D)|  +  \big| f^{k,\D}_0\big| \Delta   +  \big| g^{k,\D}_0 \big| \big| B(t+u) \big|. \\
\end{aligned}
\end{equation*}}
Making use of Assumption \ref{a4.3}, we arrive at
$$
|f^{k,\D}_0| \vee  |g^{k,\D}_0| \le C \left( 1 + \|\xi\|_r^{v+1} \right).
$$
This, together with Assumption \ref{a4.1}, $d_2 \ge 1/2$ and $t+u \in (0,\D)$, results in 
$$
\EE \big|Z^{k,\D}_t(u) - X^{k,\D}_t(u) \big|^{\frac{4p}{v+2}} \le C_{\xi} \D^{\frac{2p}{v+2}}.
$$

\textbf{Case $\bm{5.}$ $\bm{t+u\in [t_{j+m+1}, t_{j+m+2})\subset[\D,\8).}$} Clearly, $t_j+u\in [0, \8)$. Then, by the same argument as in \textbf{Case $\bm3$}, the desired assertion follows.

In summary, we can conclude that
$$
\EE \big| Z^{k,\D}_t(u) - X^{k,\D}_t(u) \big|^{\frac{4p}{v+2}} \le C_{T,\xi} \left( 1 + |u|^{\frac{4p^2}{v+2}} \right) \Delta^{\frac{2p}{v+2}}, \quad  u \in [-k, 0).
$$
When $u=0$ we can derive that
\begin{equation}\label{ZX0}
\EE \big| Z^{k,\D}_t(0) - X^{k,\D}_t(0) \big|^{\frac{4p}{v+2}} \le C_{T,\xi}  \D^{\frac{2p}{v+2}}
\end{equation}
by the same argument as in \textbf{Case $\bm3$}.
Hence,
$$
\EE \big| Z_t^{k,\D}(u) -  X_t^{k,\D}(u)\big|^{\frac{4p}{v+2}} \le C_{T,\xi} \left( 1 + |u|^{\frac{4p^2}{v+2}} \right) \Delta^{\frac{2p}{v+2}}, \quad \forall u \in [-k,0].
$$
{ Moreover, by \eqref{LI} and \eqref{def-Z_t}, for any $u < -k$,
$$
\EE \big| Z_t^{k,\D}(u) -  X_t^{k,\D}(u)\big|^{\frac{4p}{v+2}} = \EE \big| Z_t^{k,\D}(-k) -  X_t^{k,\D}(-k)\big|^{\frac{4p}{v+2}} \le C_{T,\xi} \left( 1 + |u|^{\frac{4p^2}{v+2}} \right) \Delta^{\frac{2p}{v+2}}.
$$
Therefore, it follows from $\mu \in \CP_{2r}$ that
$$
\begin{aligned}
\int_{-\infty}^0 \EE \big|Z^{k,\Delta}_t(u) - X^{k,\Delta}_t(u) \big|^{\frac{4p}{v+2}} \mu(\d u)
 \le & C_{T,\xi} \Delta^{\frac{2p}{v+2}}  \int_{-\infty}^0 \left( 1 + |u|^{\frac{4p^2}{v+2}} \right) \mu(\d u) \\
 \le &  C_{T,\xi} \mu^{(2r)}  \Delta^{\frac{2p}{v+2}} \left[\sup_{u \le 0} \left( e^{2ru} \left( 1 +  |u|^{\frac{4p^2}{v+2}} \right) \right) \right]    \\
 \le & C_{T,\xi} \Delta^{\frac{2p}{v+2}}.
\end{aligned}
$$
The proof is complete.}
\end{proof}
$\hfill\square$

Based on Lemma \ref{l4.7}, we obtain the convergence rate   between the TEM numerical solution and the exact  of the SFDE \eqref{TSFDE}.
\begin{theorem}\label{l4.9}
Let Assumptions \ref{mon}, \ref{a4.1}-\ref{a4.3} hold with $p \ge  1+3v/2 $. Then 
$$
      \sup_{k \ge 1} \sup_{0 \le t \le T} \EE\big|x^k(t)-X^{k,\D}(t)\big|^2 \le C_{T,\xi}  \D, \quad \forall T>0,\ \Delta \in (0,1].
$$
\end{theorem}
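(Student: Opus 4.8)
The plan is to combine a monotonicity (Itô) estimate for the error between $x^k$ and the continuous auxiliary process $Z^{k,\D}$ with the sampling estimate of Lemma \ref{l4.7}. Fix $T>0$ and write $e(t)=x^k(t)-Z^{k,\D}(t)$, so that $e(t)=0$ for $t\le 0$ and, by \eqref{XYZ}, $e(t_j)=x^k(t_j)-X^{k,\D}(t_j)$. Since
\[
\EE|x^k(t)-X^{k,\D}(t)|^2\le 2\,\EE|e(t)|^2+2\,\EE\big|Z^{k,\D}(t)-X^{k,\D}(t)\big|^2,
\]
and on each $[t_j,t_{j+1})$ one has $Z^{k,\D}(t)-X^{k,\D}(t)=f^{k,\D}_j(t-t_j)+g^{k,\D}_j(B(t)-B(t_j))$, the second term is $\le C\D$: under Assumption \ref{a4.3} both $f$ and $g$ have polynomial growth, so $\EE|f^{k,\D}_j|^2\vee\EE|g^{k,\D}_j|^2\le C$ by Corollary \ref{coro3.3} once $p\ge v+1$. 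Everything thus reduces to proving $\sup_{k\ge1}\sup_{0<\D\le1}\sup_{0\le t\le T}\EE|e(t)|^2\le C_{T,\xi}\,\D$.

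Because $Z^{k,\D}$ is continuous only away from grid points where the truncation $\L^\D$ acts, I localize at $\varrho:=\varrho^k_{\D,\D}$ from \eqref{rh}, i.e.\ I take $\Delta_1=\D$. On $[0,\varrho]$ one has $|Z^{k,\D}|<\G^{-1}(L\D^{-\lambda})$, hence $|Y^{k,\D}(t_j)|\le\G^{-1}(L\D^{-\lambda})$, so $\L^\D$ is inactive, $Z^{k,\D}$ is continuous with the Itô representation \eqref{ap-con}, and $Z^{k,\D}(t_j)=X^{k,\D}(t_j)$. Splitting $\EE|e(t)|^2=\EE\big[|e(t\we\varrho)|^2\II_{\{\varrho>t\}}\big]+\EE\big[|e(t)|^2\II_{\{\varrho\le T\}}\big]$, the second piece is controlled by Hölder's inequality, the uniform $2p$-th moment bounds of $x^k$ and $Z^{k,\D}$ (Lemma \ref{l4.6}), and the hitting estimate $\PP\{\varrho\le T\}\le C/(\G^{-1}(L\D^{-\lambda}))^{2p}$ of Lemma \ref{l4.6}, giving a bound $C\,(\G^{-1}(L\D^{-\lambda}))^{-(2p-2)}$. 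Since $\lambda=v/(2(p-1))$ forces $\G^{-1}(L\D^{-\lambda})\sim\D^{-1/(2(p-1))}$, this is exactly $O(\D)$. It remains to show $\EE|e(t\we\varrho)|^2\le C_{T,\xi}\D$.

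Applying Itô's formula to $|e(\cdot\we\varrho)|^2$ and using \eqref{ap-con} together with the SDE for $x^k$, the stochastic integral drops out in expectation and
\[
\EE|e(t\we\varrho)|^2=\EE\int_0^{t\we\varrho}\Big[2\langle e(s),\,\Delta f(s)\rangle+|\Delta g(s)|^2\Big]\d s,
\]
where $\Delta f(s)=f(\pi_k(x^k_s),\theta(s))-f(X^{k,\D}_s,\theta_s)$ and likewise for $\Delta g(s)$ (recall $f_k=f\circ\pi_k$ and that $X^{k,\D}_s$ is flat beyond $-k$). I insert the intermediate values $f(Z^{k,\D}_s,\theta(s))$, $g(Z^{k,\D}_s,\theta(s))$ with $Z^{k,\D}_s$ as in \eqref{def-Z_t}, and use Young's inequality to isolate the main part $2\langle e(s),f(\pi_k(x^k_s),\theta(s))-f(Z^{k,\D}_s,\theta(s))\rangle+(1+\epsilon)|g(\pi_k(x^k_s),\theta(s))-g(Z^{k,\D}_s,\theta(s))|^2$ plus remainders. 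As $\tilde p>2$ I may pick $\epsilon>0$ with $1+\epsilon\le\tilde p-1$, so Assumption \ref{a4.2} bounds the main part by $d_3\int_{-\infty}^0|e(s+u)|^2\nu_1(\d u)-U(x^k(s),Z^{k,\D}(s))+\int_{-\infty}^0U(x^k(s+u),Z^{k,\D}(s+u))\nu_2(\d u)$.

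It remains to dispose of the three resulting contributions. The $\nu_1$-memory term satisfies $\int_{-\infty}^0|e(s+u)|^2\nu_1(\d u)\le C\sup_{0\le r\le s}|e(r\we\varrho)|^2$ because $e$ vanishes on $(-\infty,0]$, which is exactly the Gronwall input. The two $U$-terms are handled by Fubini and the shift $\tau=s+u$: as $\nu_2$ is a probability measure and $U(x^k(\tau),Z^{k,\D}(\tau))=U(\xi(\tau),\xi(\tau))=0$ for $\tau\le0$, the local term $-\int_0^{t\we\varrho}U(x^k(s),Z^{k,\D}(s))\d s$ cancels the leading part of $\int_0^{t\we\varrho}\!\int U\,\nu_2\,\d s$, leaving a non-positive remainder. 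The remainders $f(Z^{k,\D}_s,\cdot)-f(X^{k,\D}_s,\cdot)$ and $g(Z^{k,\D}_s,\cdot)-g(X^{k,\D}_s,\cdot)$ are estimated via Assumption \ref{a4.3}: a Hölder split separates the sampling factor, controlled in $L^{4p/(v+2)}$ by Lemma \ref{l4.7} (hence of order $\D^{1/2}$), from the polynomial-growth factor, controlled by Corollary \ref{coro3.3}; the product is $O(\D)$, and $p\ge(3v/2)+1$ is precisely the threshold that keeps the conjugate growth exponent $\le 2p$. Finally the Markov mismatch terms $f(X^{k,\D}_s,\theta(s))-f(X^{k,\D}_s,\theta_s)$ (and the $g$ analogue) vanish off $\{\theta(s)\ne\theta_s\}$; conditioning on $\CF_{t_j}$ and using that $\{\o(t)\}$ is independent of $B$ with $\PP\{\theta(s)\ne\theta_s\mid\CF_{t_j}\}\le C\D$ gives an $O(\D)$ bound after taking expectations, with no localization constant. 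Collecting these yields $\EE|e(t\we\varrho)|^2\le C_{T,\xi}\D+C\int_0^t\EE\sup_{0\le r\le s}|e(r\we\varrho)|^2\d s$, and Gronwall's inequality closes the estimate with constants uniform in $k$ and $\D$. The main obstacle is the joint control of the truncation-induced discontinuity of $Z^{k,\D}$ (forcing the localization at $\varrho$ with $\Delta_1=\D$ together with a matching $O(\D)$ tail) and the telescoping cancellation of the super-linear $U$-memory term, while keeping the exponent bookkeeping in the Hölder step at the exact threshold $p\ge(3v/2)+1$.
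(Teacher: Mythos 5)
Your proposal follows the paper's own proof in all essentials: the same auxiliary process $Z^{k,\Delta}$, localization at the stopping time \eqref{rh}, the same three-way decomposition (truncation tail, sampling error $Z^{k,\Delta}-X^{k,\Delta}$, and an It\^o/monotonicity estimate for $e=x^k-Z^{k,\Delta}$), with Assumption \ref{a4.2} for the main part, Assumption \ref{a4.3} combined with Lemma \ref{l4.7} and Corollary \ref{coro3.3} for the $Z$-versus-$X$ remainders (including the exact exponent bookkeeping behind $p\ge (3v/2)+1$), and an $O(\Delta)$ bound for the Markovian mismatch. Two deviations are harmless: you localize only at $\varrho^k_{\Delta,\Delta}$ rather than at $\tau^k_{\Gamma^{-1}(L\Delta^{-\lambda})}\wedge\varrho^k_{\Delta,\Delta}$ as the paper does (fine, since Assumptions \ref{a4.2}--\ref{a4.3} are global and your tail estimate only needs $\mathbb{P}\{\varrho\le T\}$ from Lemma \ref{l4.6}), and you bound $\EE\big|Z^{k,\Delta}(t)-X^{k,\Delta}(t)\big|^2$ directly from the one-step representation and polynomial growth rather than via Lemma \ref{l4.7}; both routes are valid.

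One step needs repair. Your closing inequality reads
\begin{equation*}
\EE\big|e(t\wedge\varrho)\big|^2 \le C_{T,\xi}\Delta + C\int_0^t \EE\Big(\sup_{0\le r\le s}\big|e(r\wedge\varrho)\big|^2\Big)\,\mathrm{d}s,
\end{equation*}
with no supremum on the left-hand side (you let the stochastic integral vanish in expectation) but a supremum inside the time integral on the right. Gronwall's inequality cannot be applied to mismatched quantities, so as written the estimate does not close: $\EE\sup_{r\le s}|e(r\wedge\varrho)|^2$ is not controlled by $\sup_{r\le s}\EE|e(r\wedge\varrho)|^2$. The fix is already in your own toolkit: apply to the $\nu_1$-memory term the same Fubini-plus-shift argument you use for the $U$-terms (this is exactly the paper's \eqref{4-J11}--\eqref{4.12}); since $e$ vanishes on $(-\infty,0]$ and the shifted time window stays inside $[0,t\wedge\varrho]$, this yields the bound $\nu_1\big([-k,0]\big)\int_0^{t\wedge\varrho}|e(s)|^2\,\mathrm{d}s$, whose expectation is at most $\int_0^t\EE|e(s\wedge\varrho)|^2\,\mathrm{d}s$ with no supremum, and Gronwall closes. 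Alternatively, keep suprema on both sides by treating the martingale term with the Burkholder--Davis--Gundy inequality instead of dropping it. With either correction the argument is complete and the constants remain uniform in $k$ and $\Delta$.
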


\begin{proof}
Fix $k \ge 1$, $T > 0$ and $\D \in (0,1]$ arbitrarily.
Let 
$$e^{k,\D}(t) = x^k(t)-Z^{k,\D}(t),~\forall t \le T, \quad \vo^k_{\D}:=\tau^k_{\G^{-1}(L\D^{-\lbd})} \we \varrho^k_{\Delta,\D},$$
where $\tau^k_{\G^{-1}(L\D^{-\lbd})}$, $\varrho^k_{\Delta,\D}$ are defined by \eqref{rkh} and \eqref{rh} respectively. Obviously, for any $t \in [0, T]$,
\begin{equation*}
\begin{aligned}
 & \EE \big| x^k(t) - X^{k,\D}(t) \big|^2 \\
= & \EE\big(\big| x^k(t) - X^{k,\D}(t) \big|^2  \II_{\{\vo^k_\D \le T\}}\big)  +  \EE \big(\big| x^k(t) - X^{k,\D}(t) \big|^2 \II_{\{\vo^k_\D >  T\}}\big) \\
\le & \EE\left(\big| x^k(t) - X^{k,\D}(t) \big|^2  \II_{\{\vo^k_\D \le T\}}\right) + \EE \left(\big|e^{k,\D}(t)\big|^2  \II_{\{\vo^k_\D >  T\}} \right)   +  \EE  \big|Z^{k,\D}(t) - X^{k,\D}(t)\big|^2. \\
\end{aligned}
\end{equation*}
Using Lemmas \ref{lemma3.1} and \ref{l4.6}, Remark \ref{rmk4.4}, and following a same argument as in the proof of \cite[Lemma 3.7]{LMS24}, we obtain
$$
      \sup_{k \ge 1} \sup_{0 \le t \le T} \EE\big(\big| x^k(t) - X^{k,\D}(t) \big|^2  \II_{\{\vo^k_\D \le T\}}\big)   \le C_{T,\xi} \D.
$$
In addition, for any $t \in [0, T]$, there exists a unique nonnegative integer $j$ such that $t \in [t_j, t_{j+1})$. { It follows from $p \ge (3v/2) +1$, the H$\rm{\ddot{o}}$lder inequality and \eqref{ZX0} that
$$
   \EE \big|Z^{k,\D}(t) - X^{k,\D}(t) \big|^2 \le  \big( \EE \big| Z^{k,\D}_t(0) - X^{k,\D}_t(0) \big|^{\frac{4p}{v+2}} \big)^{\frac{v+2}{2p}} \le C_{T,\xi} \D.
$$}
Hence,
\begin{equation}\label{4-rate}
\begin{aligned}
 \EE \big| x^k(t) - X^{k,\D}(t) \big|^2 
\le &  C_{T,\xi} \D + \EE \big(\big|e^{k,\D}(t)\big|^2  \II_{\{\vo^k_\D >  T\}} \big).
\end{aligned}
\end{equation}
By \eqref{TSFDE} and \eqref{ap-con}, for any $t \in [0, T] \subset [0, \vo^k_\D)$,
$$
\begin{aligned}
 e^{k,\D}(t) 
=& \int_0^{t} \left( f_k(x^k_s,\o(s))-f(X_s^{k,\D},\o_s) \right) \d s 
+ \int_0^{t} \left( g_k(x^k_s,\o(s))-g(X_s^{k,\D},\o_s) \right) \d B(s).
\end{aligned}
$$
Using the generalized It$\hat{\text{o}}$ formula (\cite[Lemma $1.9$]{MY06}) 
and the inequality $(x+y)^2 \le (1+\varepsilon) x^2 + (1 + (1/\varepsilon) y^2)$ for   $\varepsilon > 0$, $x, y \in \RR_+$  leads to
{ \begin{equation}\label{4.9}
\begin{aligned}
      \EE \left( \big| e^{k,\D} (t)\big|^2 \II_{\{\vo^k_\D > T\}} \right) 
      \le & \EE \int_0^{t} \Big[ \Big( 2\big\lan e^{k,\D}(s),f_k(x^k_s,\o(s))-f(X^{k,\D}_s,\o_s) \big\ran\\
      & + \big|g_k(x^k_s,\o(s))-g(X^{k,\D}_s,\o_s)\big|^2 \Big) \II_{\{\vo^k_\D > T \}} \Big]\d s \\
      \le & \EE \left( \Big( \mathcal{J}_1(t) + \mathcal{J}_2(t) + \mathcal{J}_3(t) \Big) \II_{\{\vo^k_\D > T \}} \right) ,
\end{aligned}
\end{equation}}
where
$$
\mathcal{J}_1(t) :=  \int_0^{t}2\big\lan e^{k,\D}(s), f_k(x^k_s,\o(s)) - f(Z^{k,\D}_s,\o(s))\big\ran  +(\td p-1)\big|g_k(x^k_s,\o(s))-g(Z^{k,\D}_s,\o(s))\big|^2\d s,
$$
$$
\mathcal{J}_2(t) := \int_0^{t} 2\big\lan e^{k,\Delta}(s), f(Z^{k,\D}_s,\o(s)) - f(X^{k,\D}_s,\o(s))\big\ran  + C_{\td p}\big|g(Z^{k,\D}_s,\o(s))-g(X^{k,\D}_s,\o(s))\big|^2 \d s,
$$
$$
\begin{aligned}
\mathcal{J}_3(t) := \int_0^{t} 2\big\lan e^{k,\D}(s),
f(X^{k,\D}_s,\o(s))-f(X^{k,\D}_s,\o_s)\big\ran + C_{\td p}\big|g(X^{k,\D}_s,\o(s))-g(X^{k,\D}_s,\o_s)\big|^2 \d s.\\
\end{aligned}
$$
{ It follows from Assumption \ref{a4.2} that for any $t \in [0, T] \subset [0,\vo^k_\Delta)$,}
\begin{equation}\label{J-1'}
\begin{aligned}
\mathcal{J}_1(t) & \le  \int_0^{t} \bigg( d_3   \int_{-\infty}^0 \big|\pi_k(x^k_s)(u)-Z^{k,\D}_s(u)\big|^2 \n_1(\d u) 
- U\left(x^k(s),Z^{k,\D}(s)\right)  \\
& + \int_{-\infty}^0 U \left(\pi_k(x^k_s)(u), Z^{k,\D}_s(u)\right)\n_2(\d u)  \bigg)  \d s.
\end{aligned}
\end{equation}
Moreover,
\begin{equation}\label{4.11}
\int_0^{t} \int_{-\8}^0 \big|\pi_k(x^k_s)(u)-Z^{k,\D}_s(u)\big|^2\n_1(\d u)\d s \le \mathcal{J}_{11}(t)+\mathcal{J}_{12}(t),
\end{equation}
where
$$
\begin{aligned}
&\mathcal{J}_{11}(t) := \int_0^{t} \int_{-k}^0 \big|x^k_s(u)-Z^{k,\D}_s(u)\big|^2\n_1(\d u)\d s,\\
&\mathcal{J}_{12}(t) := \n_1\big((-\infty,-k) \big)  \int_0^{t} \big|e^{k,\D}(s-k)\big|^2\d s.\\
\end{aligned}
$$
From the definition of $Z^{k,\D}(\cdot)$, it follows that
\begin{equation}\label{4-J11}
\begin{aligned}
 \mathcal{J}_{11}(t) =& \int_{-k}^0 \int_0^{t}  \big|e^{k,\D}(s+u)\big|^2 \d s \n_1(\d u) \\
\le & \int_{-k}^0 \int_u^0  \big|e^{k,\D}(s)\big|^2 \d s \n_1(\d u)+\int_{-k}^0 \int_0^{t}  \big|e^{k,\D}(s)\big|^2 \d s \n_1(\d u) \\
= & \n_1\big([-k,0]\big)\int_0^{t\we\vo^k_\D}\big|e^{k,\D}(s)\big|^2\d s.
\end{aligned}
\end{equation}
Similarly, 
\begin{equation}\label{4-J12}
\begin{aligned}
\mathcal{J}_{12}(t)
\le  \n_1\big((-\8,-k)\big)  \int_0^{t} \big|e^{k,\D}(s)\big|^2\d s.
\end{aligned}
\end{equation}
Inserting \eqref{4-J11} and \eqref{4-J12} into \eqref{4.11} gives
\begin{equation}\label{4.12}
\begin{aligned}
 \int_0^{t} \int_{-\8}^0 |\pi_k(x^k_s)(u)-Z^{k,\D}_s(u)|^2 \n_1(\d u)\d s
\le   \int_0^{t}|e^{k,\D}(s)|^2\d s.
\end{aligned}
\end{equation}
In a similar way, we have
\begin{equation}\label{4.13}
\begin{aligned}
 \int_0^{t}\int_{-\8}^0 U \left(\pi_k(x^k_s)(u),Z^{k,\D}_s(u)\right) \n_2(\d u)\d s 
\le   \int_0^{t} U \left(x^k(s),Z^{k,\D}(t)\right)\d s.
\end{aligned}
\end{equation}
Substituting \eqref{4.12} and \eqref{4.13} into \eqref{J-1'} gives
\begin{equation}\label{J-1}
\begin{aligned}
 \EE \left( \mathcal{J}_1(t) \II_{\{\vo^k_\D > T \}} \right)  \le d_3 \EE \int_0^{t} \left(  \big|e^{k,\D}(s)\big|^2 \II_{\{\vo^k_\D > T \}} \right) \d s.
\end{aligned}
\end{equation}
Furthermore, it follows from Young's inequality that
\begin{equation}\label{4.15}
\begin{aligned}
\EE \left( \mathcal{J}_2(t) \II_{\{\vo^k_\Delta > T \}} \right)\le& \EE \int_0^{t} \left(  \big|e^{k,\D}(s)\big|^2 \II_{\{\vo^k_\Delta > T \}} \right)\d s \\
 +  &  C_{\td p}\EE\int_0^{t} \bigg[\bigg(|f(Z^{k,\D}_s,\o(s))-f(X^{k,\D}_s,\o(s))|^2 \\
  +  &  |g(Z^{k,\D}_s,\o(s)) - g(X^{k,\D}_s,\o(s))|^2 \bigg) \II_{\{\vo^k_\Delta > T \}} \bigg] \d s.
\end{aligned}
\end{equation}
{  Following the estimation procedure employed in \cite[(3.79)]{LMS24}, and applying Assumption \ref{a4.3}, $p \ge (3v/2)+1$, together with Theorem \ref{th-BTN} and Lemmas \ref{l4.6}, \ref{l4.7}, we obtain, for any $s \in [0, t]$
\begin{align}\nn
& \EE \left( \big|f(Z^{k,\D}_s,\o(s)) -f(X^{k,\D}_s,\o(s)) \big|^2 \II_{\{ \vo^k_\Delta > T\}} \right)  \\ \nn
\le & C \bigg[\EE\Big(  \int_{-\8}^0 \big|Z^{k,\D}_s(u)-X^{k,\D}_s(u)\big|^{\frac{4p}{v+2}} \n_3(\d u) \Big)\bigg]^{\frac{v+2}{2p}} \\ \nn \nonumber
& \K \bigg[ 1 + \EE\left(  \int_{-\8}^0  \left( \left( \big|Z^{k,\D}_s(u)\big|^v +
\big|X^{k,\D}_s(u)\big|^v \right) \II_{\{ \vo^k_\Delta > T\}} \right) \n_4(\d u) \right)^{\frac{4p}{2p-v-2}}  \bigg]^{\frac{2p-v-2}{2p}} \\ \nn
\le & C_{T,\xi} \Delta.
\end{align}}
Similarly,
$$
\EE \left( \big|g(Z^{k,\D}_s,\o(s)) -g(X^{k,\D}_s,\o(s))\big|^2 \II_{\{ \vo^k_\Delta > T\}} \right) \le C_{T,\xi} \D.
$$
{  Inserting the above two inequalities into \eqref{4.15} yields
\begin{equation}\label{J-2}
\EE \left( \mathcal{J}_2(t) \II_{\{\vo^k_\Delta > T \}} \right) \le C_{T,\xi}\D + \EE \int_0^{t} \left(  \big|e^{k,\D}(s)\big|^2 \II_{\{\vo^k_\Delta > T \}} \right) \d s.
\end{equation}
Using Assumption \ref{a4.3}, Theorem \ref{th-BTN} and $p > (3v/2) + 1$, by the similar techniques in the proof of \cite[(5.15)]{LLMS23}, we obtain
\begin{equation}\label{J-3}
\begin{aligned}
\EE \left( \mathcal{J}_3(t) \II_{\{\vo^k_\Delta > T\}} \right) 
 \le & \EE \int_0^{t}  \left( |e^{k,\D}(s)|^2 \II_{\{\vo^k_\Delta > T \}} \right) \d s + \EE \int_0^t 
|f(X^{k,\D}_s,\o(s))-f(X^{k,\D}_s,\o_s)|^2 \d s \\
& + C_{\td p} \EE \int_0^t \big|g(X^{k,\D}_s,\o(s))-g(X^{k,\D}_s,\o_s)\big|^2 \d s\\
\le & C_{T,\xi}\D + \EE \int_0^{t} \left( \big|e^{k,\D}(s)\big|^2 \II_{\{\vo^k_\Delta > T\}} \right) \d s.
\end{aligned}
\end{equation}
Inserting \eqref{J-1}, \eqref{J-2} and \eqref{J-3} into \eqref{4.9} gives
\begin{equation}\label{4.23}
      \EE \left(  \big|e^{k,\D}(t) \big|^2 \II_{\{\vo^k_\Delta > T \}} \right) \le  C_{T,\xi}\D + C \int_0^{t} \EE \left( \big|e^{k,\D}(s)\big|^2 \II_{\{\vo^k_\Delta > T\}} \right) \d t.
\end{equation}
An application of the Gronwall inequality leads to
$$
   \EE \left(  \big|e^{k,\D}(t) \big|^2 \II_{\{\vo^k_\Delta > T \}} \right) \le C_{T,\xi}\D.
$$}
Inserting this inequality into \eqref{4-rate} gives 
$$
\EE \big| x^k(t) - X^{k,\D}(t) \big|^2 
\le C_{T,\xi} \Delta.
$$
Then the desired assertion follows from the fact that $C_{T,\xi}$ is independent of $k$ and $t$. The proof is complete.
\end{proof}
$\hfill\square$

We now establish the strong convergence rate of the TEM numerical solution to the exact solution of the HSFDEswID \eqref{ISFDE} by combining Lemma \ref{th4.5} and Theorem \ref{l4.9}.
\begin{theorem}\label{th4.11}
Let the assumptions in Theorem \ref{l4.9} hold. Then,
$$
      \sup_{0 \le t \le T} \EE \big|x(t)-X^{k,\D}(t)\big|^2 \le  C_T \left( e^{-\alpha k} + \D\right), \quad \forall T > 0,~ k \ge T, \hbox{ and } \Delta \in (0, 1],
$$
where $\alpha$ is defined in Lemma \ref{th4.5}.
\end{theorem}

Especially, in Theorem \ref{th4.11}, for  $\D \in (0, 1]$, let
 $k_{\D} = \left\lfloor  \frac{- \ln(\D/K)}{\alpha} \right\rfloor.
$
Write $X^{k_{\D},\D}(T)=X^\D(T) $ for short. Then, by virtue of Theorem \ref{th4.11}, it follows that the TEM numerical solution converges to the exact one of \eqref{ISFDE} with  $1/2$ convergence order.
\begin{coro}\label{rmk4.12}
Let the assumptions in Theorem \ref{l4.9} hold. Then,
$$
\EE \big|x(T)-X^\D(T)\big|^2 \le C_{T,\xi} \D, \quad \forall T > 0.
$$
\end{coro}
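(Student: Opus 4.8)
The plan is to derive the corollary as a direct consequence of Theorem~\ref{th4.11}, by substituting the logarithmically-tuned choice $k=k_\D$ from \eqref{k_De} and checking that the resulting geometric term $e^{-\alpha k_\D}$ is itself of order $\D$. Note first that $\alpha=(\alpha_1-2r)\we(\alpha_2-(2+\iota)r)>0$ under Assumptions~\ref{a4.2} and \ref{a4.3}, so all the estimates below are meaningful.

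The first step is to confirm that the hypothesis $k\ge T$ of Theorem~\ref{th4.11} holds for $k_\D$. Since $\D\in(0,1]$ and $K\ge e^{\alpha(T+1)}\ge 1$, we have $\D/K\le 1$, whence $-\ln(\D/K)=\ln(K/\D)\ge\ln K\ge\alpha(T+1)$ and therefore $-\ln(\D/K)/\alpha\ge T+1$. Taking integer parts and using $\lfloor x\rfloor\ge x-1$ gives
$$
k_\D=\left\lfloor \frac{-\ln(\D/K)}{\alpha}\right\rfloor\ge \frac{-\ln(\D/K)}{\alpha}-1\ge T,
$$
so indeed $k_\D\ge T\ve 1\ge T$ and Theorem~\ref{th4.11} is applicable at $k=k_\D$.

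The second step is to bound the exponential term. From the same floor inequality $k_\D>-\ln(\D/K)/\alpha-1$ one gets $\alpha k_\D>-\ln(\D/K)-\alpha$, hence
$$
e^{-\alpha k_\D}<e^{\alpha}\,\frac{\D}{K}=\frac{e^{\alpha}}{K}\,\D\le e^{-\alpha T}\D\le \D,
$$
where the penultimate inequality uses $K\ge e^{\alpha(T+1)}$ and the last uses $\alpha T\ge 0$. Substituting this into the bound of Theorem~\ref{th4.11} evaluated at $t=T$ yields
$$
\EE\big|x(T)-X^\D(T)\big|^2=\EE\big|x(T)-X^{k_\D,\D}(T)\big|^2\le C_T\big(e^{-\alpha k_\D}+\D\big)\le 2C_T\,\D,
$$
and absorbing the factor $2$ (together with the $\xi$-dependence already carried by the constants of Theorem~\ref{th4.11}) into $C_{T,\xi}$ gives the claim.

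There is no genuine obstacle here: the whole content is that the logarithmic tuning \eqref{k_De} is calibrated so that the truncation error $e^{-\alpha k}$ coming from approximating the infinite delay (Lemma~\ref{th4.5}) is of the same order as the $O(\D)$ discretisation error (Theorem~\ref{l4.9}), thereby balancing the two contributions appearing in Theorem~\ref{th4.11}. The only point needing a little care is the floor estimate $\lfloor x\rfloor\ge x-1$, which must be invoked in both directions—once to guarantee admissibility $k_\D\ge T$, and once to turn $k_\D$ back into the explicit $O(\D)$ bound on $e^{-\alpha k_\D}$.
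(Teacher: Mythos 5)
Your proposal is correct and follows exactly the route the paper intends: the paper states the corollary as an immediate consequence of Theorem \ref{th4.11} with the choice $k=k_\D$ from \eqref{k_De}, and your two floor-function estimates ($k_\D\ge T$ for admissibility, and $e^{-\alpha k_\D}\le e^{\alpha}\D/K\le\D$ to balance the truncation error against the discretisation error) are precisely the computations the paper leaves implicit. The only cosmetic slip is the closing remark that the floor inequality is used ``in both directions''---both applications are in fact the same lower bound $\lfloor x\rfloor\ge x-1$---but this does not affect the argument.
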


\section{Exponential stability}\label{S5}

Exponential stability is one of important dynamical properties in the infinite-time horizon, which attracts  much attention in many fields such as automatic control and others. This section first establishes the exponential stability of the exact solution for \eqref{ISFDE}, including in both moment and almost sure means. Then, we have to face two questions:
\begin{itemize}
  \item Can the truncated equation  \eqref{TSFDE} keep the stability?
  \item Which numerical method can preserve the underlying stability of \eqref{ISFDE}?
\end{itemize}
 This section attempts to answer the two questions above.

Now we begin with introducing the structure of the probability space. Owing to the independence of the Brownian motion $\{B(t)\}_{t\ge 0}$ and the Markov chain $\{\o(t)\}_{t\ge 0}$,  we introduce a product probability space.
Let $(\W_1, \CF^1, \{\CF^1_t\}_{t \ge 0}, \PP_1)$ be a filtered probability space on which $\{B(t)\}_{t\ge0}$ is a $d$-dimensional Brownian motion, and $(\W_2, \CF^2, \{\CF^2_t\}_{t\ge 0}, \PP_2)$ be another filtered probability space on which $\{\o(t)\}_{t\ge 0}$ is a continuous-time Markov chain.
Let
$$
(\W, \CF, \{\CF_t\}_{t \ge 0}, \PP)=(\W_1\K\W_2, \CF^1\K\CF^2, \{\CF^1_t\K\CF^2_t\}_{t \ge 0}, \PP_1\K\PP_2).
$$
We use $\w = (\w_1, \w_2)$ to denote a generic element of $\W$. Let $Z$ be a random variable defined on the product space $\Omega= \Omega_1 \times \Omega_2$. For convenience, for any fixed $\omega_1 \in \Omega_1$, define random variable $Z^{\omega_1}$ on $\Omega_2$ by
$$
Z^{\omega_1}(\omega_2) = Z(\omega_1, \omega_2),
$$ 
and similarly, for any fixed $\omega_2 \in \Omega_2$, define random variable $Z^{\omega_2}$ on $\Omega_1$ by
$$
Z^{\omega_2}(\omega_1) = Z(\omega_1, \omega_2).
$$  In the sequel, we use the mutually independent processes $\w \mapsto B(t, \w_1)$ and $\w \mapsto \o(t, \w_2)$.
That is, for almost $\w = (\w_1, \w_2)$, $B(t)$ depends only on $\w_1$ and $\o(t)$ depends only on $\w_2$. 
Let $\EE_1$ and $\EE_2$ denote the expectations with respect to $\PP_1$ and $\PP_2$.
In this section, we  always assume $\o(t)$ is irreducible (i.e., the linear equation $\varpi Q =0$ and $\sum_{i=1}^N \varpi_i = 1$ has a unique solution $\varpi=(\varpi_1, \cdots, \varpi_N) \in \RR^N$ satisfying $\varpi_i > 0$ for any $i \in \SM$) and
\begin{equation}\label{0}
f(0,i) = 0, \quad g(0,i) = 0, \quad \forall i\in\SM.
\end{equation}
For convenience, for any vector $y = (y_1, \cdots, y_N)^{\rm T}$ and   constant $q > 0$, define
$$
\text{diag}(y) = \text{diag}(y_1, \cdots, y_N),\
Q_{q,y} = Q +  q \text{diag}(y),\
\eta_{q,y} =-\max_{u\in\text{spec}(Q_{q,y})}\text{Re}(u),
$$
where $\text{spec}(Q_{q,y})$ and $\text{Re}(u)$ denote the spectrum of $Q_{q,y}$ and the real part of $u$, respectively.

\begin{assp}\label{a5.2}
There exists a
  measure $\r_1 \in \CP_{2r}$ and constants $\a_i \in \RR$, $\be_i \in \RR_+$ such that
$$
2 \langle \p(0), f(\p,i) \rangle + |g(\p,i)|^2\\
\le \a_i |\p(0)|^2 + \be_i \int_{-\8}^0|\p(u)|^2\r_1(\d u), \quad \forall \p\in\CC_r,~~~~\forall~i \in \SM.
$$
\end{assp}

Under Assumptions \ref{Lip} and \ref{a5.2} one notices that SFDEs \eqref{ISFDE} and \eqref{TSFDE} has a unique global solution  $x(t)$ and $x^k(t)$ on $(-\8, \8)$, respectively. For convenience, define $$\g_i = \a_i + \r_1^{(-\hat\a)}\be_i, \quad \gamma'_i = \alpha_i + (1- { \epsilon}
+ \rho_1^{(-\hat a)})\beta_i, \quad a_i = \alpha_i + \beta_i, \quad \forall i \in \SM,$$ where $\epsilon = (\c \beta)^{-1} \hat \beta$ if $\c \beta > 0$, and $\epsilon = 0$ if $\c \beta=0$. Let $\g = (\g_1, \cdots, \g_N)$ and $\gamma' = (\g'_1, \cdots, \g'_N)$.

\subsection{Stability of the approximation solutions}

This subsections yields the moment exponential stability of the solutions to equations \eqref{ISFDE} and \eqref{TSFDE}.
\begin{theorem}\label{sta1} 
Let Assumptions \ref{Lip} and \ref{a5.2} hold.
\begin{itemize}
\item[$(\romannumeral1)$] Assume that $\hat \alpha < 0$, $\eta_{1,\g} > 0$  and  there exists a constant $\kappa>0$ such that $\r_1 \in \CP_{(-\hat\a + \kappa)\ve (2r) }$. Then the solutions of \eqref{ISFDE} and \eqref{TSFDE} have the property
$$
\EE|x(t)|^2 \vee \left(\sup_{k \ge 1} \EE |x^k(t)|^2\right) \le C \|\xi\|_r^2 e^{-\eta_{1,\g} t}, \quad \forall t \ge 0.
$$
\item[$(\romannumeral2)$] Assume that $\hat a < 0$, $\eta_{1,\g'} > 0$ and there exists a constant $\kappa' > 0$ such that $\r_1 \in \CP_{(-\hat a + \kappa') \ve (2r) }$. Then the solutions of \eqref{ISFDE} and \eqref{TSFDE} have the property
$$
\EE|x(t)|^2 \vee \left(\sup_{k \ge 1} \EE |x^k(t)|^2\right) \le C \|\xi\|_r^2 e^{-\eta_{1,\g'} t}, \quad \forall t \ge 0.
$$
\end{itemize}
Here $C $ is a positive constant independent of $k$ and $t$.
\end{theorem}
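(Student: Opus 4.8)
The plan is to run an $\o$-weighted second-moment Lyapunov analysis whose weights are read off from the spectrum of $Q_{1,\g}$, and to tame the infinite-delay integral with the fading-memory integrability of $\r_1$. First I would record the algebraic input. The Metzler matrix $Q_{1,\g}=Q+\diag(\g)$ has all row sums equal to $\g_i$, so its spectral abscissa $-\eta_{1,\g}$ satisfies $-\eta_{1,\g}\ge\hat\g$, whence $\eta_{1,\g}\le-\hat\g\le-\hat\a$ (because $\r_1^{(-\hat\a)}\be_i\ge0$). Since $\eta_{1,\g}>0$ and $\o$ is irreducible, the largest-real-part eigenvalue $-\eta_{1,\g}$ is real and simple with a strictly positive Perron eigenvector $\vo=(\vo_1,\dots,\vo_N)\gg0$, i.e. $\sum_j q_{ij}\vo_j+\g_i\vo_i=-\eta_{1,\g}\vo_i$ for all $i\in\SM$. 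Fixing $\lambda\in(0,\eta_{1,\g}]$ this gives $\sum_j q_{ij}\vo_j+\a_i\vo_i+\lambda\vo_i\le-\r_1^{(-\hat\a)}\be_i\vo_i$. The bound $\lambda\le-\hat\a<-\hat\a+\kappa$ combined with $\r_1\in\CP_{(-\hat\a+\kappa)\ve(2r)}$ ensures both $\r_1^{(\lambda)}\le\r_1^{(-\hat\a)}<\8$ (monotonicity of $\m\mapsto\m^{(\cdot)}$) and finiteness of $\int_{-\8}^0 e^{-2ru}\r_1(\d u)$; these two facts drive the delay estimate.

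Next I would exploit the product structure and the independence of $B$ and $\o$. Applying the generalized It\^o formula (\cite[Lemma 1.9]{MY06}) to $\vo_{\o(t)}|x(t)|^2$ and inserting Assumption \ref{a5.2}, then taking expectations, produces a differential inequality for $V(t):=\EE(\vo_{\o(t)}|x(t)|^2)=\sum_i\vo_i\,\EE(\II_{\{\o(t)=i\}}|x(t)|^2)$ whose instantaneous part carries the coefficient $\sum_j q_{ij}\vo_j+\a_i\vo_i\le-\eta_{1,\g}\vo_i-\r_1^{(-\hat\a)}\be_i\vo_i$ and an additive infinite-delay contribution of the form $\sum_i\vo_i\be_i\,\EE(\II_{\{\o(t)=i\}}\int_{-\8}^0|x(t+u)|^2\r_1(\d u))$.

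The step I expect to be hardest is controlling this delay contribution at the \emph{sharp} rate, because the weight $\vo_i\be_i$ is tied to the current chain state $\o(t)$ while the state $|x(t+u)|^2$ is sampled at the earlier time $t+u$. To decouple them I would first average over the Brownian motion with the chain path frozen, so that $\be_{\o(t)}$ acts as a deterministic multiplier, and then use the Markov property together with the independence of $B$ and $\o$ to transport the indicator forward through the transition semigroup: for $t+u\ge0$, $\EE(\II_{\{\o(t)=i\}}|x(t+u)|^2)=\sum_j(e^{-uQ})_{ji}\,\EE(\II_{\{\o(t+u)=j\}}|x(t+u)|^2)$. This closes the inequality for $V$, after which the delayed terms are estimated by a Lyapunov--Krasovskii (double-integral) correction against $\r_1$ and collapsed using $\r_1^{(\lambda)}\le\r_1^{(-\hat\a)}$ and the spectral identity $\sum_j q_{ij}\vo_j+\g_i\vo_i=-\eta_{1,\g}\vo_i$; the calibration of $\g_i=\a_i+\r_1^{(-\hat\a)}\be_i$ is precisely what keeps the surviving drift $\le(\lambda-\eta_{1,\g})V(t)\le0$ rather than forcing the lossier constant $\max_i\vo_i\be_i$. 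The pre-initial segment $t+u<0$ contributes at most $C\|\xi\|_r^2$ through $|\xi(t+u)|\le\|\xi\|_r e^{-r(t+u)}$ and $\r_1\in\CP_{2r}$.

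Finally, a Gronwall argument gives $e^{\lambda t}V(t)\le C\|\xi\|_r^2$ for every $\lambda<\eta_{1,\g}$, and the Perron eigenvector lets one take $\lambda=\eta_{1,\g}$; since $\vo_i\ge\min_j\vo_j>0$ this yields $\EE|x(t)|^2\le C\|\xi\|_r^2 e^{-\eta_{1,\g}t}$. The same computation applies verbatim to \eqref{TSFDE}: because $\pi_k(\p)(0)=\p(0)$, the coefficients $f_k,g_k$ satisfy Assumption \ref{a5.2} with the identical $\a_i,\be_i$ (the truncation only reshapes the delay measure in a dominated way), so every constant is independent of $k$ and the $\sup_{k\ge1}$ bound follows. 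Part (\romannumeral2) is proved along the same lines with $\g$ replaced by $\g'$: one splits $\be_i\int_{-\8}^0|\p(u)|^2\r_1(\d u)$, retaining a $(1-\epsilon\we1)$-fraction for an instantaneous comparison tuned to the aggregate $a_i=\a_i+\be_i$ and comparing the remainder at the sharper rate $-\hat a$; this is exactly what the definitions $\epsilon=\hat\be/\c\be$ and $\g'_i=\a_i+(1-\epsilon\we1+\r_1^{(-\hat a)})\be_i$ encode, and the hypotheses $\hat a<0$, $\eta_{1,\g'}>0$, $\r_1\in\CP_{(-\hat a+\kappa')\ve(2r)}$ play the roles of their counterparts in (\romannumeral1).
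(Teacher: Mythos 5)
Your proposal takes a genuinely different route from the paper, and the difference is exactly where it breaks. The paper argues \emph{quenched}: it freezes the chain path $\w_2$, applies It\^o's formula and Gronwall to $e^{-\int_0^t \a_{\o^{\w_2}(s)}\d s}\,\EE_1|x^{k,\w_2}(t)|^2$, so that the delay correction enters \emph{pathwise inside the exponent} --- after the change of variables $s-u\mapsto s$ the only price is the deterministic factor $e^{\hat\a u}$, yielding the exponent $\int_0^t\big(\a_{\o(s)}+\r_1^{(-\hat\a)}\be_{\o(s)}\big)\d s=\int_0^t\g_{\o(s)}\d s$ up to an additive constant --- and only at the very end takes $\EE_2$ and invokes the Feynman--Kac spectral bound of \cite[Proposition 4.1]{BG10} to get $\EE_2 e^{\int_0^t\g_{\o(s)}\d s}\le Ce^{-\eta_{1,\g}t}$. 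You instead run an \emph{annealed} argument, putting the Perron eigenvector $\vo$ of $Q_{1,\g}$ directly as a state-dependent weight on $|x(t)|^2$ and taking full expectations from the start. The instantaneous part then works (your spectral identity is fine), but the entire burden falls on the step you yourself flag as hardest: the delay term $\EE\big[\vo_{\o(s)}\be_{\o(s)}\int_{-\8}^0|x(s+u)|^2\r_1(\d u)\big]$, where the weight is sampled at time $s$ and the state at time $s+u$.

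This is a genuine gap, not a presentational one. Your transport identity is correct, but after applying it the delay feedback carries the weight vector $e^{(-u)Q}(\vo\be)$, where $(\vo\be)_i:=\vo_i\be_i$. To let the negative term $-\r_1^{(-\hat\a)}\vo_j\be_j|x|^2$ produced by your calibration of $\g$ absorb this feedback, you would need a bound of the form $\big(e^{(-u)Q}(\vo\be)\big)_j\le c(u)\,\vo_j\be_j$ with $c$ integrable against $\r_1$ at the rate $\r_1^{(-\hat\a)}$; this fails in general, because irreducibility makes $(e^{(-u)Q})_{ji}>0$ for all $i,j$ when $u<0$, so the left-hand side is strictly positive at any state $j$ with $\be_j=0$ (which Assumption \ref{a5.2} permits), while the right-hand side vanishes there. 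The only generally valid substitute is $\big(e^{(-u)Q}(\vo\be)\big)_j\le\max_i(\vo_i\be_i)\le\big(\max_i(\vo_i\be_i)/\min_j\vo_j\big)\vo_j$, i.e.\ precisely the ``lossier constant'' you promise to avoid; feeding it into Gronwall gives decay at a rate $\eta_{1,\g}-K$ with $K>0$, not the claimed sharp rate $\eta_{1,\g}$. The sentence ``the calibration of $\g_i$ is precisely what keeps the surviving drift $\le(\lambda-\eta_{1,\g})V(t)$'' is therefore an unsupported assertion: the calibration fixes the instantaneous drift but cannot repair the weight mismatch between $\vo_{\o(s)}\be_{\o(s)}$ and the past state, and the Lyapunov--Krasovskii correction you allude to reintroduces the same problem, since the chain generator then acts on the weight inside the memory functional and produces uncontrolled cross terms $\sum_j q_{ij}(\vo_j\be_j-\vo_i\be_i)$. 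The paper's quenched decomposition exists precisely to sidestep this: conditionally on $\w_2$, $\be_{\o^{\w_2}(s-u)}$ is a deterministic function of time, so no exchange of weights between different times is ever needed. Your one-sentence treatment of part ($\romannumeral2$) (and of the uniformity in $k$ of the truncation terms, which the paper handles through the explicit estimates of $I_1^{k,\w_2}$, $I_3^k$, $I_4^{k,\w_2}$) inherits the same gap.
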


\begin{proof}
$(\romannumeral1)$ To emphasize that the Lyapunov exponent is independent of $k$, we provide only the proof of the exponential stability of $x^k(t)$. The exponential stability of $x(t)$ follows using the same argument.
For any $k \ge 1$ and $\w_2 \in \W_2$, consider the following equation
$$
\d x^{k,\w_2}(t)=f_k(x^{k,\w_2}_t,\o^{\w_2}(t))\d t + g_k(x^{k,\w_2}_t,\o^{\w_2}(t))\d B(t),~ t>0,
$$
with initial data $x^{k,\w_2}_0=\xi$ and $\o^{\w_2}(0)=i_0$.
Applying It$\hat{\text{o}}$'s formula, Assumption \ref{a5.2} and the definition of $\pi_k$, we obtain that
\begin{equation}\label{5.12}
\begin{aligned}
e^{-\int_0^t \a_{\o^{\w_2}(s)}\d s} \EE_1 |x^{k,\w_2}(t)|^2
\le & \|\xi\|_r^2 + \EE_1 \int_0^t e^{-\int_0^s \alpha_{\theta^{\omega_2(v)}}\d v} \beta_{\theta^{\omega_2}(s)} \int_{-\infty}^0 |\pi_k(x^{k,\omega_2}_s)|^2 \rho_1(\d u) \d s \\
\le &  \|\xi\|_r^2  + \EE_1 I_1^{k,\w_2}(t)  + \r_1\big((-\infty,-k]\big) \EE_1 I_2^{k,\w_2}(t) ,
\end{aligned}
\end{equation}
where
$$
I_1^{k,\w_2}(t) =\int_0^t e^{-\int_0^s \a_{\o^{\w_2}(v)}\d v}
\be_{\o^{\w_2}(s)} \int_{-k}^0 |x^{k,\w_2}(s+u)|^2 \r_1(\d u)\d s.
$$
$$
I_2^{k,\w_2}(t)=  \int_0^t e^{-\int_0^s \a_{\o^{\w_2}(v)}\d v}
\be_{\o^{\w_2}(s)} |x^{k,\w_2}(s-k)|^2 \d s,
$$
Applying the Fubini theorem, then
making a change of variable from $s-k$ to $s$ and using $\alpha_i \ge \hat{\alpha}$ for any $i \in \SM$, we derive that
$$
\begin{aligned}
I^{k,\omega_2}_1(t) & =  \int_0^t e^{-\int_0^s \a_{\o^{\w_2}(v)}\d v}\be_{\o^{\w_2}(s)} \int_{-k}^0 |x^{k,\w_2}(s+u)|^2 \r_1(\d u)\d s \\
& =  \int_{-k}^0 \int_u^{t+u} e^{-\int_0^{s-u} \a_{\o^{\w_2}(v)}\d v}\be_{\o^{\w_2}(s-u)}  |x^{k,\w_2}(s)|^2  \d s  \r_1(\d u) \\
& \le  \c{\be} \|\xi\|_r^2 \int_{-k}^0 e^{\hat{\a}u} \int_u^0 e^{-(2r + \hat{\a})s} \d s \r_1(\d u) \\
&+   \int_0^t \left( e^{-\int_0^{s} \a_{\o^{\w_2}(v)} \d v} |x^{k,\w_2}(s)|^2 \right) \left( \int_{-k}^0 \be_{\o^{\w_2}(s-u)} e^{ \hat{\a } u  } \r_1(\d u) \right)\d s.
\end{aligned}
$$
Similarly,
$$
I^{k,\omega_2}_2(t) 
\le   \c{\be} \|\xi\|_r^2 e^{-\hat{\a} k} \int_{-k}^0 e^{-(2r + \hat{\a})s} \d s  
+  \int_0^{t} \Big( e^{-\int_0^{s} \a_{\o^{\w_2}(v)}\d v} |x^{k,\w_2}(s)|^2 \Big)   \Big( \be_{\o^{\w_2}(s+k)} e^{- \hat{\a} k  } \Big)\d s.
$$
Inserting the above inequalities into \eqref{5.12}, we derive that
\begin{equation*}
\begin{aligned}
& e^{-\int_0^t \a_{\o^{\w_2}(s)}\d s} \EE_1 |x^{k,\w_2}(t)|^2  
\le   \big( 1 + \c{\be} I_3^{k}\big)  \|\xi\|_r^2
+   \int_0^t   e^{-\int_0^s \a_{\o^{\w_2}(v)} \d v} \EE_1|x^{k,\w_2}(s)|^2  I_4^{k,\w_2}(s)  \d s,
\end{aligned}
\end{equation*}
where
$$
I_3^{k} =  \r_1\big((-\infty,-k]\big)e^{ -\hat{\a} k } \int_{-k}^0 e^{ -(2r + \hat{\a})s } \d s  +  \int_{-k}^0 e^{ \hat{\a} u } \int_u^0 e^{ -(2r + \hat{\a})s } \d s \r_1(\d u),
$$
$$
I_4^{k,\w_2}(s) = \r_1((-\infty,-k]) \be_{ \o^{\w_2}(s+k) }  e^{ -  \hat{\a} k } +  \int_{-k}^0  \be_{ \o^{\w_2}(s-u) }  e^{ \hat{\a} u}  \r_1 (\d u).$$
Then, Gronwall's inequality (\cite[Lemma 8., p.52]{K93}) is applicable so that
\begin{equation*}
\EE_1 |x^{k,\w_2}(t)|^2
\le    \big( 1 + \c{\be} I_3^{k}\big) \|\xi\|_r^2   e^{\int_0^t \left( \a_{\o^{\w_2}(s)} +  I_4^{k,\w_2}(s) \right) \d s }.
\end{equation*}
Taking expectation with respect to $\PP_2$ yields
\begin{equation}\label{5.13}
\EE |x^k(t)|^2
\le  \big( 1 + \c{\be} I_3^{k}\big) \|\xi\|_r^2  \EE_2 e^{\int_0^t \left( \a_{\o(s)} + I_4^{k}(s) \right) \d s },
\end{equation}
where $I_4^{k}(s)$ is defined analogously to $I_4^{k,\omega_2}(s)$, with $\theta^{\omega_2}(\cdot)$ replaced by $\theta(\cdot)$.
We will now proceed to estimate $\int_0^t I_4^{k}(s)  \d s$. By the Fubini theorem, we obtain
\begin{equation*}
\begin{aligned}
&   \int_0^t \int_{-k}^0    \be_{ \o(s-u) } e^{ \hat\a u } \r_1 (\d u) \d s
=  \int_{-k}^0 \int_{-u}^{t-u} e^{ \hat\a u } \be_{\o(s)} \d s \r_1(\d u) \\
\le & \int_{-k}^0 \int_t^{t-u} e^{ \hat\a u } \be_{\o(s)} \d s \r_1(\d u) + \int_{-k}^0 \int_0^{t} e^{ \hat\a u } \be_{\o(s)} \d s \r_1(\d u) \\
\le & \c\be  \int_{-k}^0  (-u) e^{ \hat\a u } \r_1(\d u)  + \int_{-k}^0  e^{ \hat\a u }  \r_1(\d u)  \int_0^{t} \be_{\o(s)}   \d s.
\end{aligned}
\end{equation*}
Similarly,
$$
\int_0^t \be_{ \o(s+k) } \d s  \le \c\beta k + \int_0^t \beta_{\theta(s)} \d s.
$$
Hence, 	\begin{equation}\label{5-5.7}
\begin{aligned}
\int_0^t I_4^k(s) \d s  & \le  \c\beta \left(  \rho_1\big((-\infty,-k]\big) k e^{-\hat\a k} + \int_{-k}^0 (-u)e^{\hat\a u} \rho_1(\d u) \right) \\
& +  \left(  \rho_1\big((-\infty,-k]\big)  e^{-\hat\a k} + \int_{-k}^0 e^{\hat\a u} \rho_1(\d u) \right) \int_0^t \beta_{\theta(s)} \d s.
\end{aligned}
\end{equation}
One also notes from $\r_1 \in \CP_{(-\hat\a+\kappa)\vee(2r)}$ that
\begin{equation}\label{rho_1}
\begin{aligned}
&  \rho_1\big((-\infty,-k]\big) k e^{-\hat\a k} + \int_{-k}^0 (-u)e^{\hat\a u} \rho_1(\d u) \\
\le &  \sup_{u \le 0} \left(-u e^{\kappa u} \right)   \left( \rho_1\big((-\infty,-k]\big) e^{-(\hat\alpha-\kappa)k} +  \int_{-k}^0  e^{(\hat\a-\kappa) u}   \rho_1(\d u)  \right) 
\le  \frac{ \rho_1^{(-\hat\a + \kappa)}}{e \kappa}.
\end{aligned}
\end{equation}
Similarly, 
$$
\rho_1\big((-\infty,-k]\big)  e^{-\hat\a k} + \int_{-k}^0 e^{-\hat\a u} \rho_1(\d u) \le \rho_1^{(-\hat\a)}.
$$
Substituting the above inequalities into \eqref{5-5.7} gives
\begin{equation}\label{5-5.7'}
\int_0^t I_4^k(s) \d s   \le  c_1 +  \rho_1^{(-\hat\a)} \int_0^t \beta_{\theta(s)} \d s,
\end{equation}
where $c_1 = \c \beta \rho_1^{(-\hat\a + \kappa)}/e \kappa$.
It can be shown in the same way as \eqref{rho_1} that
\begin{align}\nn
I^k_3 \le \frac{\c\be \|\xi\|_r^2 \r_1^{(-\hat\a+\kappa)} }{e\kappa}, \hbox{ if } 2r+\hat\a=0 \hbox{ and } I^k_3 \le \frac{\c\be \|\xi\|_r^2 \r_1^{\left((2r) \vee (-\hat\alpha) \right)} }{|2r+\hat\a|}, \hbox{ if } 2r+\hat\a \neq 0,
\end{align}
which means
\begin{align}\label{CI2}
I_3 := \sup_{k \ge 1}I_2^k   < +\infty.
\end{align}
Inserting \eqref{5-5.7'} and \eqref{CI2} into \eqref{5.13} implies
\begin{equation*}
\sup_{k \ge 1}\EE |x^k(t)|^2 \le c_2 \|\xi\|_r^2 \EE_2 e^{\int_0^t (\a_{\o(s)} + \r_1^{(-\hat\a)}\be_{\o(s)}) \d s},
\end{equation*}
here $c_2 =   \left( 1 + \c{\be} I_3 \right)e^{c_1}$.
By using \cite[Proposition 4.1]{BG10}, we obtain
\begin{equation}\label{sta-m}
\sup_{k \ge 1}  \EE |x^k(t)|^2 \le c_2 \|\xi\|_r^2 e^{-\eta_{1,\g} t}.
\end{equation}

$(\romannumeral2)$ The proof is similar to $(\romannumeral1)$; it only needs to apply the It$\hat{\rm o}$'s formula for $e^{-\int_0^t a_{\theta(s)} \d s  }|x^k(t)|^2$ instead of $e^{-\int_0^t \alpha_{\theta(s)} \d s  }|x^k(t)|^2$, and thus we omit it. The proof is complete.
\end{proof}$\hfill\square$ 

\begin{rmk}
In this remark, we provide some sufficient conditions which ensure that the assumptions ``$\hat\alpha < 0$,
$\eta_{1,\gamma} > 0$" and ``$\hat a < 0$,
$\eta_{1,\gamma'} > 0$" in Theorem \ref{sta1} hold. Since the state space $\SM$ is finite and $Q$-matrix is irreducible, $\theta(t)$ has a unique stationary distribution denoted by $\varpi = (\varpi_1, \cdots, \varpi_N)^{\rm T} \in \RR^{N}$. According to \cite[Theorem 1.5]{BG10}, if
\begin{equation}\label{5-5.10}
\sum_{i=1}^N \varpi_i \gamma_i < 0,
\end{equation}
there is a constant $\sigma_\gamma > 0$ such that $\eta_{q,\gamma}>0$ for $q \in (0, \sigma_\gamma)$ and $\eta_{q,\gamma} < 0$ for $q \in (\sigma_\gamma, +\infty).$ Hence, $\eta_{1,\gamma} > 0$ when $\sigma_\gamma > 1$. Moreover, \eqref{5-5.10} implies $\hat\alpha < 0$, since $\gamma_i \ge \alpha_i$ for all $i \in \SM$. In conclusion, the assumption  ``$\hat\alpha < 0$,
$\eta_{1,\gamma} > 0$" holds if \eqref{5-5.10} and $\sigma_\gamma > 1$ hold. Similarly, ``$\hat a < 0$,
$\eta_{1,\gamma'} > 0$" can be deduced by
$$
\sum_{i=i}^N \pi_i \gamma'_i < 0  \hbox{ and } \sigma_{\gamma'} > 1.
$$
\end{rmk}

In general, the $2$th moment exponential stability of the solutions does not imply that it is the almost surely  exponentially  stable. 
We impose an additional assumption in order for 
the almost sure exponential stability.

\begin{assp}\label{a5.4}
There exist a positive constant $K$ and a probability measure $\rho_2 \in \CP_{2r}$ such that
$$
|g(\p,i)|\\
\le K   \int_{-\infty}^0|\p(u)| \r_2(\d u), \quad \forall \p\in\CC_r,~ i \in \SM.
$$
\end{assp}

\begin{theorem}\label{sta2}
Let Assumptions \ref{Lip}, \ref{a5.2} and \ref{a5.4} hold.
\begin{itemize}
\item[$(\romannumeral1)$] 
Let the assumptions in Theorem \ref{sta1} $(\romannumeral1)$ hold. Then for any $\lambda \in (0, (2r)\wedge\eta_{1,\gamma})$, the solutions of \eqref{ISFDE} and \eqref{TSFDE} satisfy
$$
\limsup_{t \rightarrow \8} \frac{\log(|x(t)|)}{t} \vee \left[\sup_{k \ge 1} \left( \limsup_{t \rightarrow \8} \frac{\log(|x^k(t)|)}{t} \right)\right] \le
-\frac{\lambda}{2} \quad a.s.
$$
\item[$(\romannumeral2)$] Let the assumptions in Theorem \ref{sta1} $(\romannumeral2)$ hold. Then for any $\lambda'\in (0, (2r)\wedge\eta_{1,\gamma'})$, the solutions of \eqref{ISFDE} and \eqref{TSFDE} satisfy
$$
\limsup_{t \rightarrow \8} \frac{\log(|x(t)|)}{t} \vee \left[\sup_{k \ge 1} \left( \limsup_{t \rightarrow \8} \frac{\log(|x^k(t)|)}{t} \right)\right] \le
-\frac{\lambda'}{2}, \quad a.s.
$$
\end{itemize}
\end{theorem}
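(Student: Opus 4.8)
The plan is to deduce the almost sure exponential stability from the mean-square exponential decay already supplied by Theorem \ref{sta1}, through a unit-window supremum estimate followed by a Borel--Cantelli argument. I shall treat only $x^k(t)$ and keep every constant independent of $k$, so that the bound applies verbatim to $x(t)$ and the supremum over $k$ follows by a countable union of null sets. Fix $\lambda\in(0,(2r)\we\eta_{1,\g})$ and choose an auxiliary $\lambda''\in(\lambda,(2r)\we\eta_{1,\g})$. The first ingredient is a \emph{segment decay} estimate: for any $\rho\in\CP_{2r}$ and any integer $n\ge0$,
$$\EE\int_n^{n+1}\int_{-\infty}^0|x^k_s(u)|^2\rho(\d u)\,\d s\le C\|\xi\|_r^2\, e^{-\lambda'' n},$$
with $C$ independent of $k$. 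This is obtained by splitting the inner integral at $u=-s$: on $\{s+u\ge0\}$ I use the moment bound $\EE|x^k(s+u)|^2\le C\|\xi\|_r^2 e^{-\eta_{1,\g}(s+u)}$ from Theorem \ref{sta1} and $\eta_{1,\g}\ge\lambda''$, while on $\{s+u<0\}$ I use $|\xi(v)|^2\le\|\xi\|_r^2 e^{-2rv}$; finiteness of $\rho^{(\lambda'')}$ and $\rho^{(2r)}$, guaranteed by $\rho\in\CP_{2r}$ and $\lambda''<2r$, then yields the stated rate.

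The second and central ingredient bounds the growth of the solution over a unit window. The difficulty is that $f$ may be super-linear, so $\EE\int_n^{n+1}|f|^2\,\d s$ cannot be estimated directly; I circumvent this by applying the generalized It\^{o} formula (\cite[Lemma 1.9]{MY06}) to $|x^k(t)|^2$ rather than to $x^k(t)$. Since $|x^k|^2$ carries no dependence on the chain state, the drift of $\d|x^k(t)|^2$ is exactly $2\langle x^k(s),f(x^k_s,\theta(s))\rangle+|g(x^k_s,\theta(s))|^2$, which Assumption \ref{a5.2} bounds by $\a_{\theta(s)}|x^k(s)|^2+\be_{\theta(s)}\int_{-\infty}^0|x^k_s(u)|^2\rho_1(\d u)$. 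As $\SM$ is finite, $\a_i,\be_i$ are bounded, so after taking the window supremum and expectation the drift contributes $\le C\,\EE\int_n^{n+1}\big(|x^k(s)|^2+\int_{-\infty}^0|x^k_s(u)|^2\rho_1(\d u)\big)\,\d s\le Ce^{-\lambda'' n}$ by the segment estimate. For the martingale $\int_n^t 2\langle x^k,g\,\d B\rangle$ I use the Burkholder--Davis--Gundy inequality: its quadratic variation is dominated by $\int_n^{n+1}|x^k(s)|^2|g|^2\,\d s\le\big(\sup_{n\le s\le n+1}|x^k(s)|^2\big)\int_n^{n+1}|g|^2\,\d s$, while Assumption \ref{a5.4} together with Jensen's inequality gives $|g(x^k_s,\theta(s))|^2\le K^2\int_{-\infty}^0|x^k_s(u)|^2\rho_2(\d u)$. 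A Young inequality then lets me absorb a fraction of $\EE\sup_{n\le t\le n+1}|x^k(t)|^2$ into the left-hand side, leaving only $\EE\int_n^{n+1}|g|^2\,\d s\le Ce^{-\lambda'' n}$. Combining these gives
$$\EE\Big(\sup_{n\le t\le n+1}|x^k(t)|^2\Big)\le Ce^{-\lambda'' n},\qquad C\text{ independent of }k.$$

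Finally I pass to a pathwise rate. By the Markov inequality $\PP\big(\sup_{n\le t\le n+1}|x^k(t)|^2>e^{-\lambda n}\big)\le Ce^{-(\lambda''-\lambda)n}$, which is summable, so Borel--Cantelli produces an a.s.\ finite $n_0(\w)$ with $\sup_{n\le t\le n+1}|x^k(t)|^2\le e^{-\lambda n}$ for all $n\ge n_0$. For $t\in[n,n+1]$ this forces $\log|x^k(t)|/t\le -\lambda n/(2(n+1))$, whence $\limsup_{t\to\infty}\log|x^k(t)|/t\le-\lambda/2$ a.s. Taking the union of the countably many exceptional null sets over $k\in\NN_+$ and over $x$ delivers part $(\romannumeral1)$; part $(\romannumeral2)$ is identical once $\g,\eta_{1,\g}$ are replaced by $\g',\eta_{1,\g'}$ and the bound of Theorem \ref{sta1}$(\romannumeral2)$ is invoked, the constraint $\lambda'<2r$ being exactly what the segment estimate needs for $\rho_1,\rho_2\in\CP_{2r}$. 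The main obstacle is precisely the super-linear drift; it is resolved by working with $|x^k|^2$, so that Assumption \ref{a5.2} controls the combined quantity $2\langle x,f\rangle+|g|^2$ rather than $f$ alone, while Assumption \ref{a5.4} keeps the diffusion quadratic enough for the supremum term to be absorbed.
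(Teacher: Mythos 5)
Your proof is correct, but it takes a genuinely different route from the paper. The paper applies the generalized It\^{o} formula directly to the weighted quantity $e^{\lambda t}|x^k(t)|^2$ on $[0,t]$, uses the Burkholder--Davis--Gundy inequality and Young's inequality (exactly as you do) to absorb $\frac12\EE\sup_{0\le s\le t}e^{\lambda s}|x^k(s)|^2$, bounds the delay integrals via $\int_0^t e^{\lambda s}\int_{-\infty}^0|\pi_k(x^k_s)(u)|^2\rho_j(\d u)\d s\le \frac{\|\xi\|_r^2\rho_j^{(2r)}}{2r-\lambda}+\rho_j^{(\lambda)}\int_0^t e^{\lambda s}|x^k(s)|^2\d s$ (this is where $\lambda<2r$ enters), and then invokes the mean-square decay of Theorem \ref{sta1} with $\lambda<\eta_{1,\gamma}$ to conclude $\EE\big(\sup_{0\le s\le t}e^{\lambda s}|x^k(s)|^2\big)\le c_6\|\xi\|_r^2$ uniformly in $t$ and $k$; Fatou's lemma then gives $\sup_{s\ge 0}e^{\lambda s}|x^k(s)|^2<\infty$ a.s., from which the Lyapunov exponent bound follows immediately --- no Borel--Cantelli and no auxiliary rate are needed. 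You instead prove windowed moment decay $\EE\sup_{n\le t\le n+1}|x^k(t)|^2\le Ce^{-\lambda'' n}$ (unweighted It\^{o} plus BDG/Young plus your segment decay estimate, which plays the role of the paper's delay-integral bound) and finish with Chebyshev and Borel--Cantelli, paying for it with the auxiliary rate $\lambda''\in(\lambda,(2r)\wedge\eta_{1,\gamma})$. Both routes rest on the same pillars (Theorem \ref{sta1}, Assumptions \ref{a5.2} and \ref{a5.4}, BDG, and the constraint $\lambda<2r$ to control the fading-memory tail), and both keep constants $k$-independent so the countable union over $k$ works. The paper's approach is slightly slicker and yields the stronger pathwise statement that $e^{\lambda t}|x^k(t)|^2$ is bounded a.s.; yours is the more classical, modular scheme that works whenever windowed second moments decay exponentially. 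One small gloss to tidy up: for $x^k$ the coefficients are $f_k=f\circ\pi_k$, $g_k=g\circ\pi_k$, so Assumptions \ref{a5.2} and \ref{a5.4} produce $\int_{-\infty}^0|\pi_k(x^k_s)(u)|^2\rho_j(\d u)$ rather than the untruncated segment appearing in your segment estimate; the same splitting argument covers this, uniformly in $k$, because the tail mass satisfies $\rho_j((-\infty,-k])\le e^{-2rk}\rho_j^{(2r)}$, but the statement of your first ingredient should be phrased for the truncated segment as well.
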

\begin{proof}
$(\romannumeral1)$
{ For any $\lambda \in (0, (2r)\wedge\eta_{1,\gamma})$, by applying It$\hat{\rm o}$'s formula and the proof techniques similar to those used in \cite[Theorem 3.1]{LLMS23} and \cite[$(4.14)$]{WYM17}, we obtain}
$$
 \EE \Big( \sup_{0\le s\le t}e^{\lbd s}|x^k(s)|^2 \Big)
\le  C \|\xi\|_r^2 + C  \int_0^t e^{\lbd u}  \EE  |x^k(u)|^2 \d u. 
$$
It follows from \eqref{sta-m} and $\lambda \in (0, (2r) \wedge \eta_{1,\gamma})$ that
$$
\EE \Big( \sup_{0\le s\le t}e^{\lbd s}|x^k(s)|^2 \Big)
\le  C  \|\xi\|_r^2. 
$$
Applying Fatou's lemma as $t \to \infty$, we obtain
\begin{equation}\label{5.8}
 \EE \Big( \sup_{0\le s < \8}e^{\lbd s}|x^k(s)|^2 \Big) \le C \|\xi\|_r^2.
\end{equation}
Hence,
$\sup_{0 \le s < \8}e^{\lbd s} |x(s)|^2 < \8 ~ a.s.$, which implies
$$
\limsup_{t \rightarrow \8}\frac{\log(|x^k(t)|)}{t} \le -\frac{\lbd}{2} \quad a.s.
$$
as desired.

$(\romannumeral2)$ The proof of conclusion $(\romannumeral2)$ is analogous to that of conclusion $(\romannumeral1)$ and is therefore omitted. This completes the proof.
\end{proof}
$\hfill\square$

\subsection{Stability of TEM solutions}
This subsection  aims to propose a more precise explicit numerical scheme such that its numerical solution realizes the underlying exponential stability in the infinite time horizon.

Under Assumption \ref{Lip}, by \eqref{0}, we may choose an increasing continuous function $\bar \G : \RR_+ \to \RR_+$ such that for any $\p\in\CC_r$ with $\|\p\|_r\le R$ and $i\in\SM$,
\begin{equation}\label{fg1}
\begin{aligned}
 & |f(\p,i)|^2  \le \bar \G^2(R) \int_{-\8}^0 |\p(u)|^2 \m_1(\d u), \\
 & |g(\p,i)|^2 \le \bar \G(R) \int_{-\8}^0|\p(u)|^2 \m_1(\d u).
\end{aligned}
\end{equation}
Assume that there is a positive integer $l$ such that $\D=1/l \in (0,1]$. Define a truncation mapping $\bar \L^\D: \RR^n \rightarrow \RR^n$ by
{ \begin{equation}\label{L1}
\bar \L^\D(x) = \Big( |x|\we {\bar \G}^{-1}(\D^{-\lbd}) \Big)\frac{x}{|x|}, \quad \forall x \in \RR^n,
\end{equation}}
where $\lbd \in (0,1/2)$ and $x/|x|=0$ if $x=0$. Then define the TEM scheme as follows
{ \begin{align}\label{TEMy}
\left\{\begin{aligned}
&U^{k,\D}(t_j)=\xi(t_j), \quad -kl \le j \le 0,\\
&V^{k,\D}({t_j})=\bar \L^{\D}(U^{k,\D}(t_j)),\ j \ge -kl,\\
&U^{k,\D}(t_{j+1})=V^{k,\D}(t_j)+f(V^{k,\D}_{t_j},\o_j)\D +
g(V^{k,\D}_{t_j},\o_j)\D B_j, \ j \ge 0.
\end{aligned}\right.
\end{align}}
where $V^{k,\D}_{t_j}$ is a $\mathcal{C}_r$-valued random variable which is defined by
\begin{equation}\label{LIV}
V^{k,\D}_{t_j}(u)=\\
\left\{\begin{aligned}
& \frac{t_{m+1}-u}{\D} V^{k,\D}(t_{j+m}) + \frac{u-t_m}{\D} V^{k,\D}(t_{j+m+1}),\\
 & ~~~~~~~~~~~~~~~~t_m \le u \le t_{m+1},~-k l \le m\le -1,\\
& V^{k,\D}(t_{j-k l}),~u\le -k  l,
\end{aligned}
\right.
\end{equation}
Furthermore, the continuous-time TEM numerical solution is given by
$$
V^{k,\D}(t) = V^{k,\D}(t_j), \quad \forall t \in [t_j, t_{j+1}).
$$
It follows from \eqref{fg1}, \eqref{L1} and \eqref{TEMy} that
{ \begin{equation}\label{linear2}
\begin{aligned}
 & |f(V^{k,\D}_{t_j},i)|^2 \le \D^{-2\lbd} \int_{-\8}^0 |V^{k,\D}_{t_j}(u)|^2 \m_1(\d u),\\
  & |g(V^{k,\D}_{t_j},i)|^2 \le \D^{-\lbd} \int_{-\8}^0 |V^{k,\D}_{t_j}(u)|^2 \m_1(\d u),\\
\end{aligned}
\end{equation}}
for any $j \ge 0$ and $i \in \SM$. Compared with \eqref{linear}, \eqref{linear2} has a more precise property, which allows  the numerical solution to realize the underlying stability. By the similar arguments  as those in Section \ref{S3}, one notices that the TEM scheme defined by \eqref{TEMy} and \eqref{LIV} has a storage cost of $O((kl+1)n)$.  

\begin{theorem}\label{sta3}
Let Assumptions \ref{Lip} and \ref{a5.2} hold. For some $\kappa>0$, if one of the following conditions hold,
\begin{itemize}
\item[$(\romannumeral1)$] $\hat\alpha < 0$, $\eta_{1,\gamma}>0$ and $\rho_1, \mu_1 \in \CP_{(-\hat\alpha + \kappa)\vee(2r)}$;
\item[$(\romannumeral2)$] $\hat a < 0$, $\eta_{1,\gamma'}>0$ and $\rho_1, \mu_1 \in \CP_{(-\hat a + \kappa)\vee(2r)}$,
\end{itemize}
then, there exist $\Delta^* \in (0, 1]$ and $\eta > 0$ such that
\begin{equation}\label{5.19}
\sup_{k \ge 1} \EE \big|V^{k,\D}(t_n)\big|^2 \le  C \|\xi\|_r^2 e^{-\eta t_n}, \quad \forall \D \in (0, \Delta^*].
\end{equation}
Moreover, for any $\varepsilon>0$, 
\begin{equation}\label{5.20}
\sup_{k \ge 1}  \left[ \limsup_{n \rightarrow \8} \frac{\log\left(\big|V^{k,\D}(t_n)\big|\right)}{t_n}  \right] 
\le - \frac{\eta-\varepsilon}{2}, \ a.s. \quad \forall \D \in (0, \Delta^*].
\end{equation}
\end{theorem}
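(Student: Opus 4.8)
The plan is to transport the continuous-time argument of Theorem \ref{sta1} to the discrete scheme \eqref{TEMy}, the key new ingredient being the sharpened growth bound \eqref{linear2}, which---because \eqref{0} removes the additive constant present in \eqref{linear}---is homogeneous in the state and therefore compatible with exponential decay. As in the proof of Theorem \ref{sta1} I would first fix the Markov-chain sample path $\omega_2\in\W_2$, treat $\{\o_j\}$ as deterministic, carry out an $\EE_1$-estimate leading to an explicit decay factor, and only at the very end take $\EE_2$ and invoke the spectral estimate of \cite[Proposition 4.1]{BG10}. Throughout, write $f_j=f(V^{k,\D}_{t_j},\o_j)$ and $g_j=g(V^{k,\D}_{t_j},\o_j)$.

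The first step is a one-step recursion. Since $\bar\L^\D$ contracts toward the origin we have $|V^{k,\D}(t_{j+1})|^2\le|U^{k,\D}(t_{j+1})|^2$; expanding the square and taking $\EE_1(\cdot\,|\CF_{t_j})$ annihilates the stochastic increment and leaves
\begin{equation*}
\EE_1\big(|V^{k,\D}(t_{j+1})|^2\,\big|\,\CF_{t_j}\big)\le|V^{k,\D}(t_j)|^2+\big(2\langle V^{k,\D}(t_j),f_j\rangle+|g_j|^2\big)\D+|f_j|^2\D^2.
\end{equation*}
Assumption \ref{a5.2} bounds the bracket by $\alpha_{\o_j}|V^{k,\D}(t_j)|^2+\beta_{\o_j}\int_{-\infty}^0|V^{k,\D}_{t_j}(u)|^2\rho_1(\d u)$, while \eqref{linear2} controls the Euler remainder by $|f_j|^2\D^2\le L^2\D^{2-2\lambda}\int_{-\infty}^0|V^{k,\D}_{t_j}(u)|^2\mu_1(\d u)$. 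Using Jensen's inequality on the interpolation \eqref{LIV}, both memory integrals are then dominated by convex combinations of the grid values $|V^{k,\D}(t_{j+m})|^2$.

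The technical core is to sum the recursion against the integrating factor $\prod_{i<j}(1+\alpha_{\o_i}\D)^{-1}$ and reorganize the memory contributions. Splitting each integral at $-k$, bounding the tail over $(-\infty,-k]$ by $\rho_1((-\infty,-k])$ and $\mu_1((-\infty,-k])$, and performing the discrete change of summation index $t_j+u\mapsto t_j$ (the analogue of the change of variables used for $I_1^{k,\omega_2},I_2^{k,\omega_2}$ in Theorem \ref{sta1}), the hypothesis $\rho_1,\mu_1\in\CP_{(-\hat\alpha+\kappa)\vee(2r)}$ keeps every resulting shifted sum and boundary term bounded uniformly in $k$ and produces the effective coefficient $\gamma_{\o_j}=\alpha_{\o_j}+\rho_1^{(-\hat\alpha)}\beta_{\o_j}$, exactly as in \eqref{5-5.7'}. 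A discrete Gronwall step then gives, uniformly in $k$ and $\omega_2$,
\begin{equation*}
\EE_1|V^{k,\omega_2,\D}(t_n)|^2\le C\|\xi\|_r^2\prod_{j=0}^{n-1}\Big(1+\big(\gamma_{\o_j}+\mu_1^{(-\hat\alpha)}L^2\D^{1-2\lambda}\big)\D+O(\D^2)\Big).
\end{equation*}
Taking $\EE_2$, the sampled chain $\{\o_j\}$ has transition matrix $e^{\D Q}$, so $\mathrm{diag}(e^{\gamma\D})e^{\D Q}=e^{\D Q_{1,\gamma}}+O(\D^2)$ has spectral radius $e^{-\eta_{1,\gamma}\D}+O(\D^2)$; equivalently the discrete product is compared with $\EE_2\exp(\int_0^{t_n}\gamma_{\o(s)}\d s)\le Ce^{-\eta_{1,\gamma}t_n}$. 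Since the Euler correction $\mu_1^{(-\hat\alpha)}L^2\D^{1-2\lambda}$ and the $O(\D^2)$ spectral perturbation both become negligible as $\D\to0$, fixing any $\eta\in(0,\eta_{1,\gamma})$ and choosing $\D^*$ small enough yields \eqref{5.19}. Case (ii) is identical with $(\alpha,\gamma)$ replaced by $(a,\gamma')$.

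Finally, the almost sure bound \eqref{5.20} is a Borel--Cantelli consequence of \eqref{5.19}: for any $\varepsilon>0$, Chebyshev's inequality gives $\PP(|V^{k,\D}(t_n)|^2\ge e^{-(\eta-\varepsilon)t_n})\le e^{(\eta-\varepsilon)t_n}\EE|V^{k,\D}(t_n)|^2\le C\|\xi\|_r^2 e^{-\varepsilon t_n}$, whose sum over $n$ converges because $t_n=n\D$; hence $|V^{k,\D}(t_n)|^2\le e^{-(\eta-\varepsilon)t_n}$ for all large $n$ almost surely, uniformly in $k$, which is \eqref{5.20}. I expect the main obstacle to be the discrete memory reorganization: unlike the continuous case the interpolation \eqref{LIV} couples neighbouring grid points, so the change of summation index must be executed so as to preserve a bound uniform in both $k$ and $\D$, and it is precisely here---together with the control of the Euler remainder $|f_j|^2\D^2=O(\D^{2-2\lambda})$---that the $\CP_{(-\hat\alpha+\kappa)\vee(2r)}$-integrability of $\rho_1$ and $\mu_1$ and the smallness of $\D^*$ are indispensable.
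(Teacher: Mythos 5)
Your proposal is correct and follows essentially the same route as the paper's proof: freeze the Markov sample path $\omega_2$, expand the one-step recursion and take $\EE_1$-conditional expectations, bound the drift/diffusion via Assumption \ref{a5.2} and the homogeneous estimate \eqref{linear2}, reorganize the memory sums by index shifts using $\rho_1,\mu_1\in\CP_{(-\hat\alpha+\kappa)\vee(2r)}$ to keep everything uniform in $k$ and $\D$, apply discrete Gronwall, and only then take $\EE_2$ and absorb the $O(\D^{1-2\lambda})$ and $O(\D)$ perturbations by choosing $\D^*$ small, finishing the a.s. claim by Chebyshev and Borel--Cantelli over the countable family of $k$. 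The only cosmetic difference is the final switching estimate: you argue via the spectral radius of $\mathrm{diag}(e^{\gamma_i\D})e^{\D Q}$ (comparing with the continuous functional and \cite[Proposition 4.1]{BG10}), whereas the paper invokes the discrete-time result \cite[Lemma 4.1]{BSY23} directly; both yield the required bound $\EE_2\exp\{\D\sum_j\gamma_{\o_j}\}\le Ce^{-\bar\eta t_n}$.
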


\begin{proof}
$(\romannumeral1)$ 
We first prove \eqref{5.19}.
For any $\omega_2 \in \Omega_2$,  $\Delta \in (0,1]$ and $j \ge 0$,
let $\hat f^{k,\D,\omega_2}_j=f(V^{k,\D,\omega_2}_{t_j},\o^{\omega_2}_j)$,
$\hat g^{k,\D,\omega_2}_j=g(V^{k,\D,\omega_2}_{t_j},\o^{\omega_2}_j)$. In this proof, we use the notation $\alpha^{\omega_2}_j := \alpha_{\theta^{\omega_2}_j  }$, $\beta^{\omega_2}_j := \beta_{\theta^{\omega_2}_j  }$ for simplicity.
By \eqref{TEMy}, 
$$
\big|U^{k,\D,\omega_2}(t_{j+1})\big|^2 = \big|V^{k,\D,\omega_2}(t_j)\big|^2 + \hat \z^{k,\Delta,\omega_2}_j,
$$
where
$$
\begin{aligned}
\hat \z^{k,\Delta,\omega_2}_j &= \big|\hat f^{k,\D,\omega_2}_j\big|^2\D^2 + \big|\hat g^{k,\D,\omega_2}_j\D B_j\big|^2 + 2 \big\langle V^{k,\D,\omega_2}(t_j), \hat f^{k,\D,\omega_2}_j \big\rangle \D \\
 & + 2 \big\langle V^{k,\D,\omega_2}(t_j), \hat g^{k,\D,\omega_2}_j \D B_j \big\rangle  + 2 \big\langle \hat f^{k,\D,\omega_2}_j, \hat g^{k,\D,\omega_2}_j\D B_j \big\rangle \D.
\end{aligned}
$$
It follows from \eqref{linear2}, Assumption \ref{a5.2}, $\lbd \in (0,1/2)$ and the inequality $1 + x  \le e^{x}$ for any $x \in \RR$ that
\begin{equation}\label{5-dd}
\begin{aligned}
\EE_1 \big|V^{k,\D,\w_2}(t_{j+1})\big|^2 \le \EE_1 |U^{k,\Delta, \omega_2}(t_{j+1})|^2
\le &  \EE_1 \left(  \big( 1 +  \a^{\w_2}_j \D \big) \big|V^{k,\D,\w_2}(t_j)\big|^2 +   R^{k,\D,\omega_2}_{1j} +  R^{k,\D,\omega_2}_{2j} \right) \\
\le &  \EE_1 \left(  e^{ \a^{\w_2}_j \D} \big|V^{k,\D,\w_2}(t_j)\big|^2 +   R^{k,\D,\omega_2}_{1j} +  R^{k,\D,\omega_2}_{2j} \right) \\
\end{aligned}
\end{equation}
where 
$$
      R^{k,\D,\omega_2}_{1j} = \D  \be^{\w_2}_j \int_{-\8}^0 \big|V^{k,\D,\w_2}_{t_j}(u)\big|^2\r_1(\d u), ~
      R^{k,\D,\omega_2}_{2j} = L^2 \D^{2-2\lbd} \int_{-\8}^0 \big|V^{k,\D,\w_2}_{t_j}(u)\big|^2\m_1(\d u).
$$
First of all, we verify that for any $n \ge 1$,
\begin{equation}\label{5-sdd}
      e^{-\sum_{m=0}^{n-1} \alpha^{\omega_2}_{m} \Delta } \EE_1 \big| V^{k,\D,\w_2}(t_n) \big|^2 \le |\xi(0)|^2 + \sum_{j=0}^{n-1} e^{ -\sum_{m=0}^j \alpha^{\omega_2}_m \Delta } \EE_1 \left( R^{k,\D,\omega_2}_{1j} + R^{k,\D,\omega_2}_{2j} \right).
\end{equation}\label{tnt2}
{ For $n = 1$, by \eqref{5-dd} and 
\begin{equation}
|\bar\Lambda^{\Delta}(x)| \le |x|, \quad \forall x \in \RR^n,
\end{equation}
we have}
\begin{equation}\label{5-sdd1}
      e^{-\alpha^{\omega_2}_0 \Delta}  \EE_1 \big|V^{k,\D,\w_2}(t_1)\big|^2 \le    |\xi(0)|^2 + e^{-\alpha^{\omega_2}_0 \Delta} \EE_1 \left(  R^{k,\D,\omega_2}_{10} +  R^{k,\D,\omega_2}_{20} \right),
\end{equation}
which means \eqref{5-sdd} holds for $n =1$. Next, for any $j \ge 1$, multiplying both sides of \eqref{5-dd} by $e^{-\sum_{m=0}^{j} \a^{\w_2}_m\D}$ yields
$$
\begin{aligned}
& e^{-\sum_{m=0}^{j} \a^{\w_2}_m \D} \EE_1 \big|V^{k,\D,\w_2}(t_{j+1})\big|^2 \\
\le & e^{-\sum_{m=0}^{j-1} \a^{\w_2}_m \D} \EE_1 \big|V^{k,\D,\w_2}(t_j)\big|^2  
+   e^{-\sum_{m=0}^{j} \a^{\w_2}_m  \D} \EE_1 \left( R^{k,\Delta,\omega_2}_{1j} +  R^{k,\Delta,\omega_2}_{2j}  \right). \\
\end{aligned}
$$
For any positive integer $n \ge 2$, summing over $j$ from $1$ to $n-1$ yields
\begin{equation*}
\begin{aligned}
      & e^{-\sum_{m=0}^{n-1} \a^{\w_2}_m  \D} \EE_1 \big|V^{k,\D,\w_2}(t_{n}) \big|^2  \\
      \le &  e^{-\alpha^{\omega_2}_0 \Delta}\mathbb{E}_1 |V^{k,\D,\omega_2}(t_1)|^2  +  \sum_{j=1}^{n-1}  e^{-\sum_{m=0}^{j} \a^{\w_2}_m  \D} \EE_1 \left( R^{k,\Delta,\omega_2}_{1j} +  R^{k,\Delta,\omega_2}_{2j}  \right)
\end{aligned}
\end{equation*}
It then follows from \eqref{5-sdd1} that
\begin{equation*}
\begin{aligned}
      e^{-\sum_{m=0}^{n-1} \a^{\w_2}_m  \D} \EE_1 \big|V^{k,\D,\w_2}(t_{n}) \big|^2  
      \le   |\xi(0)|^2  +  \sum_{j=0}^{n-1}  e^{-\sum_{m=0}^{j} \a^{\w_2}_m  \D} \EE_1 \left( R^{k,\Delta,\omega_2}_{1j} +  R^{k,\Delta,\omega_2}_{2j}  \right).
\end{aligned}
\end{equation*}
Hence, \eqref{5-sdd} holds for any $n \ge 1$. By the definition of $V^{k,\D,\w_2}_{t_j}(\cdot)$, one has
\begin{equation} \label{R1j}
\begin{aligned}
       \sum_{j=0}^{n-1}  e^{-\sum_{m=0}^{j} \a^{\w_2}_m  \D}  R^{k,\Delta,\omega_2}_{1j} 
      = & \D \sum_{j=0}^{n-1} e^{-\sum_{m=0}^{j} \a^{\w_2}_m   \D} \be^{\w_2}_j \int_{-k}^0
 \big| V^{k,\D,\w_2}_{t_j}(u) \big|^2 \r_1(\d u)   \\
      + &\D \r_1 \big((-\8,-k]\big) \sum_{j=0}^{n-1} e^{-\sum_{m=0}^{j} \a^{\w_2}_m  \D} \be^{\w_2}_j  \big|V^{k,\D,\w_2}(t_j-k) \big|^2  \\
       := &  \mathcal{X}^{k,\D,\w_2} +  \mathcal{Y}^{k,\D,\w_2}.
\end{aligned}
\end{equation}
{ It follows from \eqref{LIV}, Jensen's inequality and \eqref{tnt2} that}
\begin{align} \nn \label{X'}
\mathcal{X}^{k,\D,\w_2} \le & \Delta \sum_{j=0}^{n-1} e^{ -\sum_{m=0}^j \alpha^{\w_2}_m \Delta } \be^{\w_2}_j \bigg[ \sum_{w=-kl}^{-1} \int_{t_w}^{t_{w+1}} \bigg( \frac{ t_{w+1}-u }{\D} |V^{k,\D,\w_2}(t_{j+w})|^2 \\ \nn
   & + \frac{u-t_w}{\D}|V^{k,\D,\w_2}(t_{j+w+1})|^2 \rho_1(\d u) \bigg) \bigg] \\ \nn
      \le& \Delta \sum_{w=-kl}^{-1} \int_{t_w}^{t_{w+1}} \bigg[ \frac{ t_{w+1}-u }{\D}\sum_{j=0}^{n-1} e^{ -\sum_{m=0}^j \alpha^{\w_2}_m \Delta } \be^{\w_2}_j  |V^{k,\D,\w_2}(t_{j+w})|^2 \\ \nn 
   & + \frac{u-t_w}{\D} \sum_{j=0}^{n-1} e^{ -\sum_{m=0}^j \alpha^{\w_2}_m \Delta } \be^{\w_2}_j |V^{k,\D,\w_2}(t_{j+w+1})|^2  \bigg]  \rho_1(\d u) \\ 
      \le& \Delta \sum_{w=-kl}^{-1} \int_{t_w}^{t_{w+1}} \bigg[ \frac{ t_{w+1}-u }{\D}\sum_{j=w}^{n-1+w} e^{ -\sum_{m=0}^{j-w} \alpha^{\w_2}_m \Delta } \be^{\w_2}_{j-w}  |V^{k,\D,\w_2}(t_j )|^2 \\ \nn
   & + \frac{u-t_w}{\D} \sum_{j=w+1}^{n+w} e^{ -\sum_{m=0}^{j-w-1} \alpha^{\w_2}_m \Delta } \be^{\w_2}_{j-w-1} |V^{k,\D,\w_2}(t_j)|^2  \bigg]  \rho_1(\d u) \\ \nn
         \le& \Delta \sum_{w=-kl}^{-1} \int_{t_w}^{t_{w+1}} \bigg[ \sum_{j=w}^{n-1}  \bigg( \frac{ t_{w+1}-u }{\D} e^{ -\sum_{m=0}^{j-w} \alpha^{\w_2}_m \Delta } \be^{\w_2}_{j-w}   \\ \nn
   & + \frac{u-t_w}{\D}  e^{ -\sum_{m=0}^{j-w-1} \alpha^{\w_2}_m \Delta } \be^{\w_2}_{j-w-1} \bigg) |V^{k,\D,\w_2}(t_j)|^2  \bigg]  \rho_1(\d u) \\ \nn
   :=& \mathcal{X}_1^{k,\D,\w_2} + \mathcal{X}_2^{k,\D,\w_2},
\end{align}
where
$$
\begin{aligned}
      \mathcal{X}^{k,\D,\w_2}_1 &=\Delta \sum_{w=-kl}^{-1} \int_{t_w}^{t_{w+1}} \bigg[ \sum_{j=w}^{-1}  \bigg( \frac{ t_{w+1}-u }{\D} e^{ -\sum_{m=0}^{j-w} \alpha^{\w_2}_m \Delta } \be^{\w_2}_{j-w}   \\
   & + \frac{u-t_w}{\D}  e^{ -\sum_{m=0}^{j-w-1} \alpha^{\w_2}_m \Delta } \be^{\w_2}_{j-w-1} \bigg) |\xi(t_j)|^2  \bigg]  \rho_1(\d u)
\end{aligned}
$$
and 
$$
\begin{aligned}
      \mathcal{X}^{k,\D,\w_2}_2 &=\Delta \sum_{w=-kl}^{-1} \int_{t_w}^{t_{w+1}} \bigg[ \sum_{j=0}^{n-1}  \bigg( \frac{ t_{w+1}-u }{\D} e^{ -\sum_{m=0}^{j-w} \alpha^{\w_2}_m \Delta } \be^{\w_2}_{j-w}   \\
   & + \frac{u-t_w}{\D}  e^{ -\sum_{m=0}^{j-w-1} \alpha^{\w_2}_m \Delta } \be^{\w_2}_{j-w-1} \bigg) |V^{k,\D,\w_2}(t_j)|^2  \bigg]  \rho_1(\d u).
\end{aligned}
$$
By simple calculations, we can conclude that
\begin{equation}\label{X1}
\begin{aligned} 
      \mathcal{X}^{k,\D,\w_2}_1 \le   &  \D  \c \be e^{-\hat \alpha \D} \sum_{w=-kl}^{-1} \int_{t_w}^{t_{w+1}} \bigg( \sum_{j=w}^{-1}    e^{ -\hat\a t_{j-w} }  |\xi(t_j)|^2  \bigg)  \rho_1(\d u) \\ 
\le & \D  \c \be e^{-\hat \alpha \D} \|\xi\|_r^2 \sum_{w=-kl}^{-1} \int_{t_w}^{t_{w+1}} \bigg( e^{\hat\a t_w} \sum_{j=w}^{-1}    e^{ -(2r+\hat\a) t_j }  \bigg)  \rho_1(\d u) \\
\end{aligned}
\end{equation}
and
\begin{equation}\label{X2}
\begin{aligned}
      \mathcal{X}^{k,\D,\w_2}_2 = &  \D  \sum_{w=-kl}^{-1} \int_{t_w}^{t_{w+1}} \bigg\{  \sum_{j=0}^{n-1}  \bigg[ \bigg( e^{-\sum_{m=0}^{j-1}\a^{\w_2}_m \D } \big| V^{k,\D,\w_2}(t_j) \big|^2 \bigg) \\
      & \K \bigg(  \frac{t_{w+1}-u}{\D}  e^{-\sum_{m=j}^{j-w} \a^{\w_2}_m  \D} \be^{\w_2}_{j-w}
       +   \frac{u-t_w}{\D}  e^{-\sum_{m=j}^{j-w-1} \a^{\w_2}_m \D} \be^{\w_2}_{j-w-1}  \bigg)  \bigg] \bigg\}\r_1(\d u) \\
       \le & \D e^{-\hat\a \D}  \sum_{w=-kl}^{-1} \int_{t_w}^{t_{w+1}}  \bigg\{ e^{\hat\a t_w}  \sum_{j=0}^{n-1}  \bigg[ \bigg( e^{-\sum_{m=0}^{j-1}\a^{\w_2}_m \D } \big| V^{k,\D,\w_2}(t_j) \big|^2 \bigg) \\
      & \K \bigg(  \frac{t_{w+1}-u}{\D}   \be^{\w_2}_{j-w} 
       +   \frac{u-t_w}{\D}   \be^{\w_2}_{j-w-1} \bigg)  \bigg] \bigg\}  \r_1(\d u), \\
\end{aligned}
\end{equation}
here we used the convention $\sum_{m=0}^{-1} \alpha^{\omega_2}_m = 0$.
Inserting \eqref{X1} and \eqref{X2} into \eqref{X'} leads to 
\begin{equation}\label{X}
\begin{aligned}
      \mathcal{X}^{k,\D,\w_2} & \le  \D  \c \be e^{-\hat \alpha \D} \|\xi\|_r^2 \sum_{w=-kl}^{-1} \int_{t_w}^{t_{w+1}} \bigg( e^{\hat\a t_w} \sum_{j=w}^{-1}    e^{ -(2r+\hat\a) t_j }  \bigg)  \rho_1(\d u)\\
      &+ \D e^{-\hat\a \D}  \sum_{w=-kl}^{-1} \int_{t_w}^{t_{w+1}}  \bigg\{ e^{\hat\a t_w}  \sum_{j=0}^{n-1}  \bigg[ \bigg( e^{-\sum_{m=0}^{j-1}\a^{\w_2}_m \D } \big| V^{k,\D,\w_2}(t_j) \big|^2 \bigg) \\
      & \K \bigg(  \frac{t_{w+1}-u}{\D}   \be^{\w_2}_{j-w} 
       +   \frac{u-t_w}{\D}   \be^{\w_2}_{j-w-1} \bigg)  \bigg] \bigg\}  \r_1(\d u), \\
\end{aligned}
\end{equation}
Similarly, we have
\begin{equation}\label{Y}
\begin{aligned}
      \mathcal{Y}^{k,\D,\w_2} 
      \le &  \D \c\be \rho_1\big((-\infty,-k]\big)  e^{-\hat\alpha (k+\D)}  \|\xi\|_r^2   \sum_{j=-k l}^{-1} e^{-(2r+\hat\a)t_j}  \\
       + & \D  \rho_1\big((-\infty,-k]\big)  e^{-\hat\alpha(k+\D)} \sum_{j=0}^{n-1} \bigg[ \bigg( e^{-\sum_{m=0}^{j-1} \a^{\w_2}_m \D}  \big| V^{k,\D,\w_2}(t_j) \big|^2 \bigg) \be^{\w_2}_{j+k l}  \bigg].
\end{aligned}
\end{equation}
Substituting \eqref{X}, \eqref{Y} into \eqref{R1j} results in
\begin{equation*}
\begin{aligned}
     &\sum_{j=0}^{n-1}  e^{-\sum_{m=0}^{j} \a^{\w_2}_m \D }  R^{k,\Delta,\omega_2}_{1j} 
\le    \c\be  e^{-\hat \a \D}  \|\xi\|_r^2   \mathcal{I}^{k,\D} +  \D \sum_{j=0}^{n-1} \left[ \left( e^{-\sum_{m=0}^{j-1} \a^{\w_2}_m \D } \big|V^{k, \D,\w_2}(t_j) \big|^2 \right) \Theta^{k, \D,\w_2}_j \right].
\end{aligned}
\end{equation*}
where
$$
      \mathcal{I}^{k,\D}= \D \r_1\big( (-\infty,-k] \big) e^{-\hat\a k} \sum_{j=-kl}^{-1} e^{-(2r + \hat\a )t_j} 
      +  \D \sum_{w=-kl}^{-1}  \int_{t_w}^{t_{w+1}} e^{\hat\a t_w} \left( \sum_{j=w}^{-1} e^{-(2r + \hat\a)t_j} \right) \r_1(\d u)
$$
and
$$
\begin{aligned}
      \Theta^{k,\D,\w_2}_j
      & =    e^{-\hat\a \D }  \sum_{w=-k l}^{-1} \int_{t_w}^{t_{w+1}} e^{\hat\a t_w } \left( \frac{t_{w+1}-u}{\D} \be^{\w_2}_{j-w} +  \frac{u-t_w}{\D} \be^{\w_2}_{j-w-1} \right) \r_1(\d u) \\
      & + \r_1 \big((-\8,-k]\big) e^{-\hat\a (k+\D)}  \be^{\w_2}_{j+k l}
\end{aligned}
$$
If $2r + \hat\a > 0$, it is easy to verify from $\r_1 \in \CP_{(-\hat\a + \kappa) \vee (2r)}$ that
\begin{equation*}
\begin{aligned}
   \mathcal{I}^{k,\D} &  \le \frac{ e^{ (2r+\hat\a)\D } }{ 2r+\hat\a } \bigg( \r_1\big((-\infty,-k]\big) e^{2r k } + \sum_{w=-kl}^{-1} \int_{t_w}^{t_{w+1}} e^{-2rt_w} \r_1(\d u)   \bigg)  \\
   & \le  \frac{ e^{ (2r+\hat\a)\D } }{ 2r+\hat\a } \bigg( \int_{-\infty}^{-k} e^{-2r u} \r_1(\d u)  + \sum_{w=-kl}^{-1} \int_{t_w}^{t_{w+1}} e^{-2rt_{w+1}} e^{2r\D} \r_1(\d u)   \bigg)  \\
   & \le \frac{ e^{ (4r+\hat\a)\D } }{ 2r+\hat\a }  \int_{-\infty}^{0} e^{-2r u} \r_1(\d u)  \le  \frac{ e^{ 4r+\hat\a } \r_1^{(2r)} }{ 2r+\hat\a }. \\
\end{aligned}
\end{equation*}
In the same way, we can show that
$$
\mathcal{I}^{k,\D} \le \frac{e^{-\hat\alpha} \r_1^{(-\hat\a + \kappa)}}{e \kappa}, \hbox{ if } 2r+\hat\a = 0 \hbox{ and } \mathcal{I}^{k,\D} \le \frac{e^{4r + \hat\a} \r_1^{(-\hat\a)}}{-(2r + \hat\a)}, \hbox{ if } 2r+\hat\a < 0.
$$
Hence,
$$
\mathcal{I}:= \sup_{0 < \D \le 1, k \ge 1}\mathcal{I}^{k,\D}   < +\infty.
$$
Therefore,
\begin{equation}\label{R1j'}
\begin{aligned}
      \sum_{j=0}^{n-1}  e^{-\sum_{m=0}^{j} \a^{\w_2}_m\D}  R^{k,\Delta,\omega_2}_{1j} \le   C \|\xi\|_r^2 +  \D \sum_{j=0}^{n-1} \left[ \left( e^{-\sum_{m=0}^{j-1} \a^{\w_2}_m \D } \big|V^{k,\D,\w_2}(t_j) \big|^2 \right) \Theta^{k,\D,\w_2}_j \right].
\end{aligned}
\end{equation}
Similarly, 
\begin{equation}\label{R2j}
      \sum_{j=0}^{n-1}  e^{-\sum_{m=0}^{j} \a^{\w_2}_m\D}  R^{k,\Delta,\omega_2}_{2j}  \le  C \|\xi\|_r^2  + \D^{2-2\lambda} \sum_{j=0}^{n-1} \Big[ \Big( e^{-\sum_{m=0}^{j-1} \a^{\w_2}_m \D } \big|V^{k,\D,\w_2}(t_j)\big|^2 \Big) \Upsilon^{k,\D} \Big],
\end{equation}
where
$$
\begin{aligned}
      \Upsilon^{k,\D} = e^{ -\hat\a \D} \sum_{w=-k l}^{-1} \int_{t_w}^{t_{w+1}}  e^{\hat\a t_w } \m_1(\d u)  +  \m_1\big((-\8,-k]\big) e^{-\hat\a(k+\D) } .
\end{aligned}
$$
Similar to the estimate of $\mathcal{I}^{k,\D}$,
$$
\sup_{0 < \D \le 1,k \ge 1}\Upsilon^{k,\D} \le e^{-2\hat\a} \m_1^{(-\hat\a)} =: \Upsilon < + \infty.
$$
Inserting the above inequality into \eqref{R2j} yields
\begin{equation}\label{R2j'}
      \sum_{j=0}^{n-1}  e^{-\sum_{m=0}^{j} \a^{\w_2}_m\D}  R^{k,\Delta,\omega_2}_{2j}  \le  C \|\xi\|_r^2  + \Upsilon \D^{2-2\lambda} \sum_{j=0}^{n-1} \left( e^{-\sum_{m=0}^{j-1} \a^{\w_2}_m \D } \big|V^{k,\D,\w_2}(t_j)\big|^2 \right),
\end{equation}
Substituting \eqref{R1j'} and \eqref{R2j'} into \eqref{5-sdd} implies
\begin{equation*}
\begin{aligned}
& e^{-\sum_{m=0}^{n-1} \alpha^{\omega_2}_{m} \Delta } \EE_1 \big| V^{k,\D,\omega_2}(t_n) \big|^2 \\
\le & C  \|\xi\|_r^2  + \D \sum_{j=0}^{n-1} \left[ \left( e^{-\sum_{m=0}^{j-1}\a^{\w_2}_m \D } \EE_1 \big| V^{k,\D,\w_2}(t_j) \big|^2 \right)  \left( \Upsilon \D^{1-2\lbd} +  \Theta^{k,\D,\w_2}_j \right)   \right].
\end{aligned}
\end{equation*}
By the well-known discrete Gronwall inequality we find that
\begin{equation}\label{5.33}
e^{-\sum_{m=0}^{n-1} \a^{\w_2}_m  \D } \EE_1 \big|V^{k,\D,\w_2}(t_n) \big|^2 \le C \|\xi\|_r^2 e^{\D \sum_{j=0}^{n-1} \left(\Upsilon \D^{1-2\lbd} + \Theta^{k,\D,\w_2}_j \right) }.
\end{equation}
It is easy to see that
\begin{equation}\label{Theta'}
      \D \sum_{j=0}^{n-1} \Theta^{k,\D,\w_2}_j = e^{-\hat\a \D }  \mathcal{R}^{k,\D,\w_2}+ \D  \r_1 \big((-\8,-k]\big) e^{-\hat\a(k+\D)} \sum_{j=0}^{n-1}  \be^{\omega_2}_{j+k l},
\end{equation}
where
$$
\begin{aligned}
\mathcal{R}^{k,\D,\w_2} := \D \sum_{w=-k l}^{-1} \int_{t_w}^{t_{w+1}} e^{\hat\a t_w } \left( \frac{t_{w+1}-u}{\D} \sum_{j=0}^{n-1} \be^{\w_2}_{j-w} +  \frac{u-t_w}{\D} \sum_{j=0}^{n-1} \be^{\w_2}_{j-w-1} \right) \r_1(\d u).
\end{aligned}
$$
By a substitution technique, 
\begin{equation}\label{R}
\begin{aligned}
      \mathcal{R}^{k,\D,\w_2}(n)
      \le &  \D \sum_{w=-k l}^{-1} \int_{t_w}^{t_{w+1}} e^{\hat\a t_w}  \left(  \sum_{j=-1-w}^{n-1-w}  \be^{\w_2}_{j} \right) \r_1(\d u) \\
      \le &  \sum_{w=-k l}^{-1} \int_{t_w}^{t_{w+1}} e^{\hat\a t_w}  \left(  \D  \sum_{j=0}^{n-1}  \be^{\w_2}_{j}  - \c\beta t_w \right) \r_1(\d u).
\end{aligned}
\end{equation}
Similarly,
\begin{equation}\label{5.36}
      \D  \sum_{j=0}^{n-1}  \be^{\w_2}_{j+k l} \le   \D \sum_{j=0}^{n-1} \beta^{\w_2}_j + \c\beta k.
\end{equation}
Inserting \eqref{R} and \eqref{5.36} into \eqref{Theta'} leads to
$$
\begin{aligned}
      \D \sum_{j=0}^{n-1} \Theta^{k,\D,\w_2}_j &  \le   e^{-\hat\a \D } \left( \sum_{w=-k l}^{-1} \int_{t_w}^{t_{w+1}} e^{\hat\a t_w}  \r_1(\d u) + \r_1\big((-\infty,-k]\big) e^{-\hat\a k} \right)  \left( \D \sum_{j=0}^{n-1} \beta^{\w_2}_j \right) \\
      &  +  \c\beta e^{-\hat\a \D } \left(  \sum_{w=-k l}^{-1} \int_{t_w}^{t_{w+1}}    \left( -    t_w e^{\hat\a t_w} \right) \r_1(\d u) + \r_1\big((-\infty,-k]\big) k e^{-\hat\a k} \right).
\end{aligned}
$$
By following a similar estimation of $\mathcal{I}^{k,\D}$ and noting that $\r_1 \in \CP_{(-\hat\a + \kappa) \vee (2r)}$, we can conclude that
$$
   \sum_{w=-k l}^{-1} \int_{t_w}^{t_{w+1}} e^{\hat\a t_w}  \r_1(\d u) + \r_1\big((-\infty,-k]\big) e^{-\hat\a k} \le e^{-\hat\a \D} \r_1^{(-\hat\a)}
$$
and
$$
\begin{aligned}
   & \sum_{w=-k l}^{-1} \int_{t_w}^{t_{w+1}}    \left( -    t_w e^{\hat\a t_w} \right) \r_1(\d u) + \r_1\big((-\infty,-k]\big) k e^{-\hat\a k}\\
   \le & \left( \sup_{u \le 0} (-u e^{\kappa u}) \right) \bigg( \sum_{w=-k l}^{-1} \int_{t_w}^{t_{w+1}}   e^{(\hat\a-\kappa) t_w} \r_1(\d u) +  \r_1\big((-\infty,-k]\big) k e^{-\hat\a k} \bigg) \le C.
\end{aligned}
$$
Hence,
$$
      \D \sum_{j=0}^{n-1} \Theta^{k,\D,\w_2}_j  \le \D e^{-2\hat\a \D} \r_1^{(-\hat\a)} \sum_{j=0}^{n-1} \beta^{\w_2}_j  + C \c{\beta} e^{-\hat\alpha \Delta},
$$
Inserting the above inequality into \eqref{5.33} results in
\begin{equation}
\begin{aligned}
      & \EE_1 \big|V^{k,\D,\w_2}(t_n) \big|^2 \\
      \le & C \|\xi\|_r^2 \exp\left\{\D \sum_{j=0}^{n-1} \left( \a^{\w_2}_j +  \Upsilon \D^{1-2\lbd} + \Theta^{k,\D,\w_2}_j \right) \right\} \\
      \le & C \|\xi\|_r^2 \exp \left\{ \left(\Upsilon \D^{1-2\lbd} + \c\beta \r_1^{(-\hat\a)}  \left( e^{-2\hat\a  \D} -1 \right) \right) t_n + \D \sum_{j=0}^{n-1} \left( \a^{\w_2}_j +  \r_1^{(-\hat\a)} \be^{\w_2}_j \right)  \right\}.
\end{aligned}     
\end{equation}
Taking expectations with respect to $\PP_2$ yields
\begin{equation}
\begin{aligned}
      \EE \big|V^{k,\D}(t_n) \big|^2 
      \le & C \|\xi\|_r^2  \exp \left\{ \left(\Upsilon \D^{1-2\lbd} + \c\beta \r_1^{(-\hat\a)}  \left( e^{-2\hat\a  \D} -1 \right) \right) t_n \right\}  \\
      \times & \EE_2\exp\left\{\D \sum_{j=0}^{n-1} \left( \a_{\o_j} +  \r_1^{(-\hat\a)} \be_{\o_j} \right)  \right\}.
\end{aligned}
\end{equation}
By virtue of $\eta_{1,\gamma} > 0$ and \cite[Lemma $4.1$]{BSY23}, we derive that there exists $\bar\eta>0$ such that
\begin{equation}\label{5.26}
      \EE \big|V^{k,\D}(t_n) \big|^2 \le  C \|\xi\|_r^2 \exp \left\{ \left(\Upsilon \D^{1-2\lbd} + \c\beta \r_1^{(-\hat\a)}  \left( e^{-2\hat\a  \D} -1 \right)  - \bar\eta \right) t_n \right\}.  
\end{equation}
Clearly,
$$
      \lim_{\Delta \rightarrow 0} \left(\Upsilon \D^{1-2\lbd} + \c\beta \r_1^{(-\hat\a)}  \left( e^{-2\hat\a  \D} -1 \right)  - \bar\eta \right) = - \bar\eta < 0.
$$
For any $\varepsilon \in (0, \bar\eta)$,
we can choose $\Delta^*\in (0,1]$ small enough such that for any $\D \in (0, \Delta^*]$,
$$
\Upsilon \D^{1-2\lbd} + \c\beta \r_1^{(-\hat\a)}  \left( e^{-2\hat\a  \D} -1 \right)  - \bar\eta  \le -\bar\eta+\e := - \eta.
$$
Substituting this into \eqref{5.26} implies \eqref{5.19}. Furthermore, by virtue of \eqref{5.19}, using the similar technique as in the proof of \cite[Theorem 4.2]{LMS24} implies \eqref{5.20}.

$(\romannumeral2)$ In this case, the desired assertion can be proved similarly to $(\romannumeral1)$. We only need to verify
$$
      e^{-\sum_{m=0}^{n-1} a^{\omega_2}_{m} \Delta } \EE_1 \big| V^{k,\D,\w_2}(t_n) \big|^2 \le |\xi(0)|^2 + \sum_{j=0}^{n-1} e^{ -\sum_{m=0}^j a^{\omega_2}_m \Delta } \EE_1 \left( R^{k,\D,\omega_2}_{1j} + R^{k,\D,\omega_2}_{2j} \right).
$$
instead of \eqref{5-sdd}, so we omit it. The proof is complete.
\end{proof}
$\hfill\square$

\section{Numerical experiments}\label{S6}

This section carries out several numerical experiments to support the  results  in Sections \ref{S4} and \ref{S5}. Let $B(t)$ be a scalar Brownian motion, and $\o(t)$ be a Markov chain on the state space $\mathbb{S}=\{1, 2\}$ with generator
$$
Q=\left(
\begin{array}{ccc}
-1  &  1 \\
2   &  -2 \\
\end{array}\right).
$$
Obviously, the Markov chain has the stationary distribution $\varpi = (2/3, 1/3)$. Consider a scalar stochastic functional volatility equation with infinite delay of the form
\begin{align}\label{eg-6.1}
\d x(t) = f(x_t, \o(t))\d t + g(x_t, \o(t))\d B(t),\ \ \ t > 0,
\end{align}
with initial data 
\begin{equation}\label{eg-initial}
\xi(u)=e^u \in \CC_1, \quad \theta(0)=1 \in \SM.
\end{equation}

\underline{Case 1.} ~~
The coefficients
$$
f(\f, 1) = -\f^3(0) + \int_{-\8}^0 \f(u)\m(\d u), \quad g(\f,1) = \f(0)
$$
$$
f(\f,2)=0.25\f(0)-\f^3(0)+ 0.25\int_{-\8}^0 \f(u)\m(\d u), \quad g(\f,2)= 0.5\f(0)
$$
for $\f\in\CC_1$, the probability measure $\m(\cdot)$ has the density $6e^{6u}$ on $(-\infty, 0]$ $($i.e., $\m(\d u)=6e^{6u}\d u$$)$.
Recalling the definition of truncation mapping $\pi_k$, we get the corresponding time-truncated HSFDE
\begin{align}\label{eg-6.1'}
\d x^k(t) = f_k(x^k_t, \o(t))\d t + g_k(x^k_t, \o(t))\d B(t),
\end{align}
with initial data \eqref{eg-initial}.
Here $f_k$ and $g_k$ have the forms
$$
f_k(\f,1)=-\f^3(0) + e^{-6k}\f(-k) + \int_{-k}^0\f(u)\mu(\d u), \quad g_k(\f,1)=\f(0),
$$
$$
f_k(\f,2)=0.25\f(0)-\f^3(0) + 0.25 e^{-6k}\f(-k) + 0.25 \int_{-k}^0\f(u)\mu(\d u), \quad g_k(\f,2)=0.5\f(0),
$$
for $\f\in\CC_1$. One notices that the drift and diffusion coefficients satisfy Assumptions \ref{Lip} and \ref{mon} with $p=3$. By virtue of Theorem \ref{th2.3} and Lemma \ref{lemma3.1}, SFDEs \eqref{eg-6.1} and \eqref{eg-6.1'} have unique global solutions $x(t)$ and $x^k(t)$ on $t \in (-\8, +\8)$, respectively.

Obviously, the initial data $\xi$ satisfies Assumption \ref{a4.1}. For any $\tilde p > 2$, we compute that
$$
\begin{aligned}
& 2\lan\p(0)-\f(0), f(\p,i)-f(\f,i)\ran + (\td p -1)|g(\p,i)-g(\f,i)|^2 \\
\le & (\tilde p+2) \Big( |\p(0)-\f(0)|^2 + \int_{-\8}^0 |\p(u)-\f(u)|^2 \mu(\d u) \Big),
\end{aligned}
$$
which implies Assumption \ref{a4.2} holds with $d_3 = \tilde p+2$, $U(\cdot,\cdot)\equiv 0$ and $\nu_1(\d u) = \mu(\d u)$.
Furthermore, we can verify that Assumption \ref{a4.3} holds with $d_4 = 1$, $v=2$, $\nu_3(\d u)=\mu(\d u)$, and $\n_4(\d u)=\nu_5(\d u)=\de_0(\d u)$. According to Remark \ref{rmk4.4}, we choose
$$
\G(R)=2(1+R^2), \quad \lbd=\frac{v}{2(p-1)}=\frac{1}{2},
$$
and $K_{10}=e^{35}$ such that $k_{\D} \ge 10$ for any $\Delta \in \{ 2^{-11}, 2^{-12}, 2^{-13}, 2^{-14}, 2^{-15}\}$. We denote the corresponding TEM numerical solution by $X^{\Delta}(t)$.
It follows from Corollary \ref{rmk4.12} that the TEM numerical solution $X^{\D}(T)$ converges to $x(T)$ with the rate of $1/2$.

To verify the efficiency of the TEM scheme, we carry out numerical experiments using MATLAB. Since \eqref{eg-6.1} cannot be solved explicitly, we regard the TEM numerical solution of \eqref{eg-6.1'} with $k=200$ and $\Delta = 2^{-16}$ as the exact solution $x(t)$ of \eqref{eg-6.1}.
Figure \ref{f1} depicts the root
mean square approximation error $\left(\EE |x(10)-X^{\D}(10)|^2\right)^{1/2}$ as a function of step size $\Delta \in \{ 2^{-11}, 2^{-12}, 2^{-13}, 2^{-14}, 2^{-15}\}$.
As illustrated in Figure \ref{f1}, the rate of convergence of the TEM numerical solution is close to $1/2$.
\begin{figure}
  \centering
  \includegraphics[width=8cm]{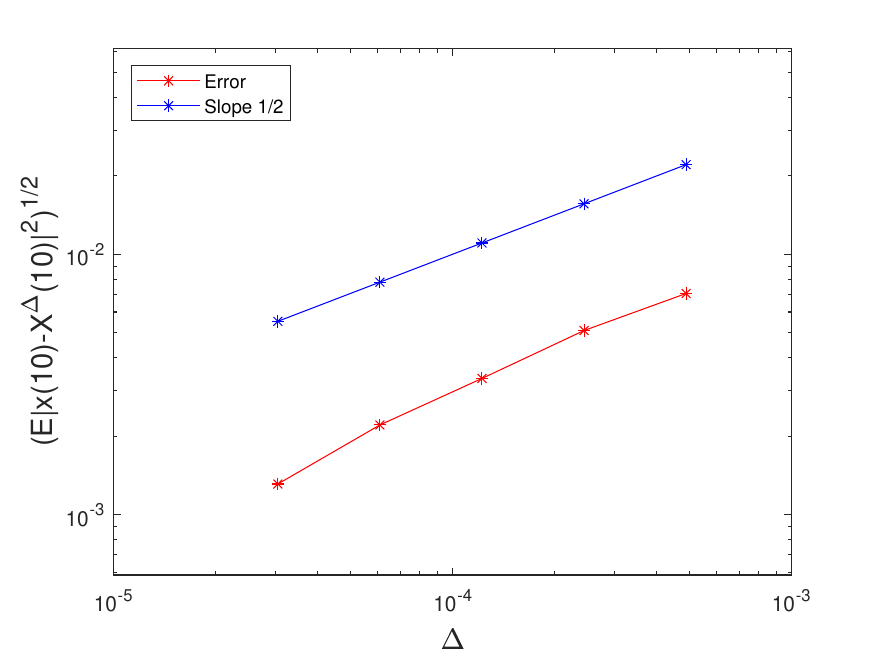}
  \caption{The root mean square approximation errors for 1000 sample points between the exact solution $x(10)$ of \eqref{eg-6.1} and the TEM numerical solution $X^\D(10)$ as a function of step size $\D \in \{2^{-11}, 2^{-12}, 2^{-13}, 2^{-14}, 2^{-15}\}$.}\label{f1}
\end{figure}

\underline{Case 2.} ~~ We take the drift coefficients of \eqref{eg-6.1} with the form
$$
f(\f, 1) = -4\f(0)-\f^3(0) + \int_{-\8}^0 \f(u)\m(\d u),
$$
$$
f(\f, 2) = 0.25\f(0)-\f^3(0)+ 0.25\int_{-\8}^0 \f(u)\m(\d u),
$$ and the diffusion coefficients are same as  in Case 1.
We choose
$$
\bar \G(R)=2(1+R^2), \quad \lbd=1/3 \in \left(0, 1/2 \right),~~\D= 2^{-10}.
$$
The corresponding TEM numerical solution is denoted by $V^{\Delta}(t)$.
It is easy to calculate that
$$
   \a_1=-6,\quad \a_2=1,\quad \be_1=1,\quad \be_2=1/4,\quad \g = (-19/5, 31/20).
$$
Hence,
$\eta_{1,\g} \approx 0.0308 > 0$.
By virtue of Theorems \ref{sta1}, \ref{sta2}, and \ref{sta3}, the corresponding exact solutions $x(t)$, $x^k(t)$, and the TEM numerical solution $V^{\D}(t)$ defined by \eqref{TEMy} are exponentially stable in the $2$th moment
and almost surely.
Figure \ref{f2} depicts the sample mean and $5$ sample paths of the TEM numerical solution $V^{\D}(t)$.
\begin{figure}
  \centering
  \includegraphics[width=14cm]{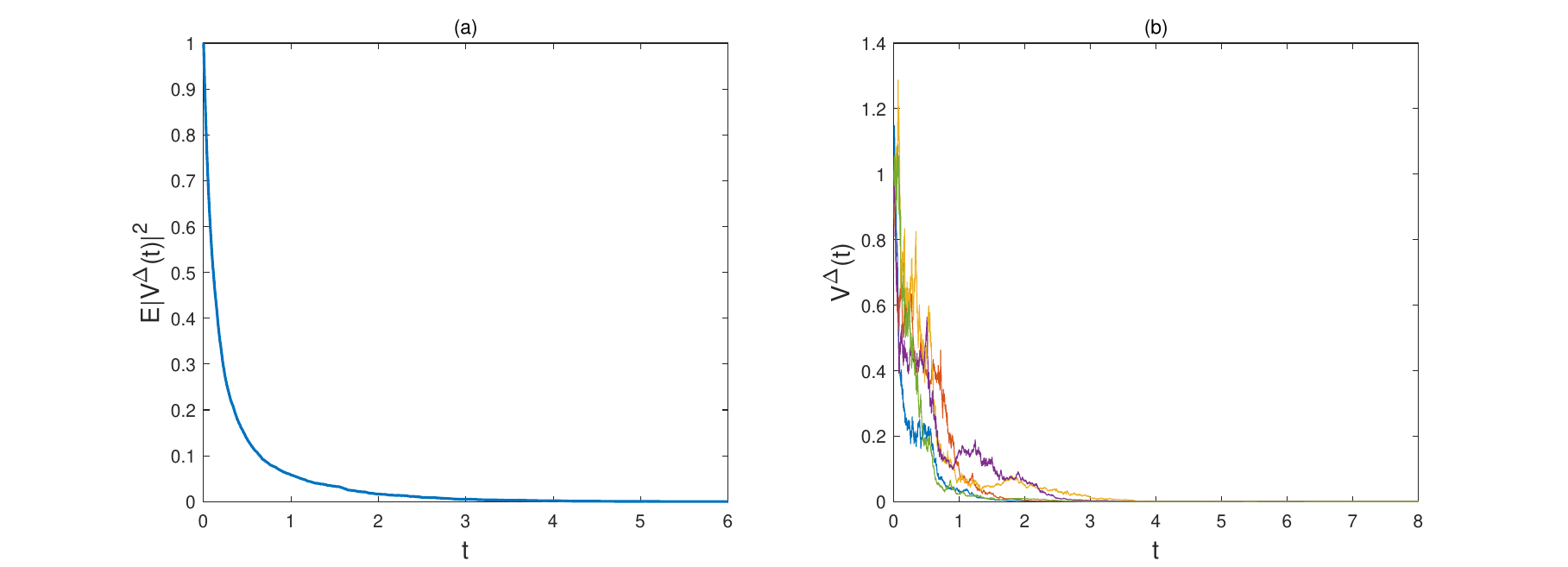}
  \caption{(a) The sample mean of $V^{\D}(t)$ for 2000 sample points with $\D=2^{-10}$. (b) $5$ sample paths of $V^{\D}(t)$ with $\D = 2^{-10}$.}\label{f2}
\end{figure}

\noindent
 \section{Conclusion Remark}
 This paper propose  an easily implementable explicit numerical method for   a class of super-linear HSFDEswID and establish its strong convergence along with $1/2$ order convergence rate by the time-truncation and space truncation techniques. Moreover, by the refined scheme the numerical solutions realize the exponential stability in the moment and almost sure means.
 Note that different functions $\G(\cdot)$ satisfying the inequality \eqref{G} and different parameter $\lambda$ in the permitted range correspond to different truncation mappings, which in turn leads to different numerical schemes. However, all these schemes are valid under the theories established in this paper, which highlights the flexibility of our theoretical framework.

\end{document}